\newtheorem{theorem}{Theorem}[section]
\newtheorem{lemm}[theorem]{Lemma}
\newtheorem{prop}[theorem]{Proposition}
\newtheorem{theo}[theorem]{Theorem}
\theoremstyle{definition}
\newtheorem{defi}[theorem]{Definition}
\newtheorem{coro}[theorem]{Corollary}
\theoremstyle{remark}
\newtheorem{remark}[theorem]{Remark}
\numberwithin{equation}{section}
\newcommand{\la}{\lambda}
\newcommand{\s}{\sigma}
\def\ra{\rangle}
\def\la{\langle}
\begin{document}

\title[Twisted Drinfeld realizations]
    {Drinfeld realization of quantum twisted affine algebras via braid group}
 %{Braid group action and twisted quantum affine algebras}

%%% ----------------------------------------------------------------------
\author[Jing]{Naihuan Jing}
\address{%Department of Mathematics, Shanghai University, Shanghai 200436, China and
   Department of Mathematics, North Carolina State University,
Raleigh, NC 27695, USA} \email{jing@math.ncsu.edu}

\author[Zhang]{Honglian Zhang$^{\star}$}
\address{Department of Mathematics,
Shanghai University, Shanghai 200436, China}
\email{hlzhangmath@shu.edu.cn}
\thanks{$^\star$H.Z., Corresponding Author}

%    General info
\subjclass[2010]{Primary 17B37, 17B67}
%\date{Version on Aug. 2, 2007}

\keywords{Twisted quantum affine algebras, braid group,
Drinfeld realization, quantum Serre relations. }
%%%%%%%%%%%%%%%%%%%%%%%%%%%%%%%%%%%%%%%%%%%%%%%%%%%%%%%%%%%%%%%%%%%%%%%%
%\footnote{Corresponding author.}%
%%%%%%%%%%%%%%%%%%%%%%%%%%%%%%%%%%%%%%%%%%%%%%%%%%%%%%%%%%%%%%%%%%%%%%%%
\begin{abstract}
The Drinfled realization of quantum affine algebras has been tremendously useful since its discovery.
Combining techniques of Beck and Nakajima with our previous approach,
we give a complete and conceptual proof of the Drinfeld realization for the twisted quantum affine algebras
using Lusztig's braid group action.
\end{abstract}

\maketitle

\section{ Introduction}

In studying finite dimensional representations of Yangian algebras and quantum affine algebras, Drinfeld gave a new realization of the Drinfeld-Jimbo quantum enveloping algebras of the affine types. Drinfeld realization is a quantum analog
of the loop algebra realization of the affine Kac-Moody Lie algebras, and has played a pivotal role in later developments of quantum affine algebras and the quantum conformal field theory. For example the basic representations of quantum affine algebras
were constructed based on Drinfeld realization \cite{FJ, J1} and the quantum Knizhnik-Zamoldchikov equation
\cite{FR} was also formulated using this realization.

The first proof of the Drinfeld realization for the untwisted types was given by Beck \cite{B} using Lusztig's braid group actions (see also \cite{LSS} for $U_q(\widehat{sl}_2)$). For
other approaches see \cite{KT, ER, DF}. By directly quantizing the
classical isomorphism of the Kac realization to the affine Lie algebras, the first author later gave an elementary
proof \cite{J2} of the Drinfeld automorphism using $q$-brackets starting from Drinfeld's quantum loop algebras.
In this elementary
approach a general strategy and algorithm was
formulated to prove the Serre relations. In particular,
all type $A$ relations were verified in details including the exceptional type $D^{(3)}_4$.

 In \cite{ZJ, JZ},
we gave an elementary proof of the twisted Drinfeld realization starting from Drinfeld's quantum loop algebras.
In particular in
\cite{JZ} we have shown that twisted quantum affine algebras obey some simplified Serre relations
in certain types as in the classical cases, and from which twisted Serre relations are consequences.
%This same direction has also been taken by \cite{Da3} to check all twisted
%quantum cases, and the homomorphism from the Drinfeld's quantum loop algebra to the
%Drinfeld-Jimbo quantum enveloping algebra was checked just as our earlier elementary approach.

The purpose of this work is to give a conceptual proof of twisted Drinfeld realizations
using braid group actions
starting from the Drinfeld-Jimbo definition. Braid group actions have been very useful in Lusztig's construction of
canonical bases \cite{L1, BN} and are particularly useful in Beck's proof
of untwisted cases. We use the extended braid group action to define root vectors in the twisted
case as in the untwisted cases. Some of the root vector computations can be
 done in a similar way as in untwisted cases. We also give a complete proof of all Serre relations
 for both untwisted and twisted cases using $q$-bracket
 techniques \cite{J1}. One notable feature of our work is that we directly prove the
 realization using the braid group action and verify the unchecked relations for
 all cases.

The second goal
of our paper
is to provide a different proof that the Drinfeld-Jimbo algebra is indeed isomorphic
to the Drinfeld algebra of the quantum affine algebra without passing to
$q=1$ case. We achieve this by combining our previous
work identifying of the Drinfeld-Jimbo quantum affine algebra inside the Drinfeld realization %\cite{ZJ, JZ}
and the explicit knowledge of the $q$-bracket computations developed in \cite{J2, ZJ}, which also established the epimorphism from the Drinfeld-Jimbo form to the Drinfeld form of the quantum affine algebra.

The paper is organized in the following manner. In Section two, we first recall the Drinfeld-Jimbo quantum affine enveloping algebras and define Lusztig's braid group action as well as the extended braid group action, and then
use this to define quantum root vectors.
Section three shows how to construct the quantum affine algebra $U_q(A_1^{(1)})$ (or $U_q(A_2^{(2)})$)
inside the twisted quantum affine algebra
$U_q(X^{(r)})$ (or $U_q(A_{2n}^{(2)})$ ). In Section four, we check all Drinfeld relations, in particular all Serre relations for twisted quantum affine algebras. Finally we prove the isomorphism of two forms of quantum affine algebras using our previous work on Drinfeld realization and $q$-bracket techniques.

\section{Definitions and Preliminaries}

\subsection{Finite order automorphisms of $\frak{g}$}
 In this paragraph some basic notations of Kac-Moody Lie algebras are recalled \cite{K}. Let $\frak{g}$ be a simple
 finite-dimensional Lie algebra, and let $\sigma$ be an (outer) automorphism of $\frak{g}$ of order $r$. Then $\sigma$ induces an automorphism of the Dynkin diagram of $\frak{g}$ with the same order. Fix a primitive $r$th root of unity $\omega=exp\frac{2\pi i}{r}$. Since $\sigma$ is diagonalizable, it follows that $$\frak{g}=\bigoplus_{j\in \mathbb{Z}/r\mathbb{Z}}\frak{g}_{_j},$$
 where $\frak{g}_{_j}$ is the $\sigma$-eigenspace with eigenvalue $\omega^j$. Clearly, the decomposition is a $\mathbb{Z}/r\mathbb{Z}-$ gradation of $\frak{g}$, then $\frak{g}_{_0}$ is a Lie subalgebra of $\frak{g}$.

Let $A=(A_{ij}), (i,\,j \in \{1,\,2,\,\cdots,\, N\})$ be one of the simply laced Cartan
matrices.
It is well-known that the Dynkin diagram $D(A)$ has a diagram automorphism $\sigma$ of order
$r=2$ or $3$. Explicitly $A$ is one of the following types: $A_N\, (N\geqslant 2)$, $D_N\, (N>4)$, $E_6$ and
$D_4$ with the canonical action of $\sigma$:
\begin{equation*}
\begin{split}
A_N: &\, \sigma(i)=N+1-i,\\
D_N: &\, \sigma(i)=i, 1\leqslant i\leqslant N-2; \, \sigma(N-1)=N,\\
E_6: &\, \sigma(i)=6-i, 1\leqslant i\leqslant 5; \, \sigma(6)=6,\\
D_4: &\, \sigma(1,2,3,4)=(3,2,4,1).
\end{split}
\end{equation*}

Let $I=\{1,\,2,\,\cdots,\,n\}$ be the set of $\sigma-$orbits on $\{1,\,2,\,\cdots,\, N\}$, where we
use representatives to denote the orbits. For example $\{i, N+1-i\}$ is simply denoted by $i$ in type $A$.
We can write $\{1, \dots, N\}=I\cup \sigma(I)$. Consequently the nodes of the Dynkin diagram $\frak{g}_{_0}$ are indexed by $I$.

\subsection{Twisted affine Lie algebras}\,
 For a non trivial automorphism $\sigma$ of the Dynkin diagram, the twisted affine Lie algebra $\widehat{\frak{g}}^{\sigma}$ is the central extension of the twisted loop algebra:
$$\widehat{\frak{g}}^{\sigma}=\Big(\bigoplus_{j\in \mathbb{Z}}\frak{g}_{_{[j]}} \otimes \mathbb{C}t^j\Big)\oplus \mathbb{C}c
\oplus \mathbb{C}d,$$
where $c$ is the central element and $ad(d)=t\frac{d}{dt}$.
%That is, the twisted affine Lie algebra $\widehat{\frak{g}}^{\sigma}$ is
%the universal central extension of the twisted loop algebra.
Denote by $\hat{I}=I \bigcup \{0\}$ the node set of the Dynkin diagram
of $\widehat{\frak{g}}^{\sigma}$. Let $\widehat{\mathfrak
h}^{\sigma}=\mathfrak h_{[0]}\oplus \mathbb Cc\oplus \mathbb Cd$ be
the Heisenberg subalgebra of $\widehat{\frak{g}}^{\sigma}$.

Let us use $A^{\sigma}=(a_{ij})\, (i,\,j\in \hat{I})$ to denote the
Cartan matrix of the twisted affine Lie algebra
$\widehat{\frak{g}}^{\sigma}$ of type $X_N^{(r)}$. The Cartan matrix
$A^{\sigma}$ is symmetrizable, that is, there exists a diagonal
matrix $D=diag(d_i|i\in \hat{I})$ such that $DA^{\sigma}$ is
symmetric. Let $\alpha_i\, (i\in \hat{I})\subset \widehat{\mathfrak
h}^{\sigma *}$ be the simple roots and let $\alpha_i^{\lor}\, (i\in
\hat{I})\subset \widehat{\mathfrak h}^{\sigma}$ be the simple
coroots of $\widehat{\frak{g}}^{\sigma}$ such that $\langle
\alpha_j, \alpha_i^{\lor}\rangle=a_{ij}$. Let $Q$ be the affine root
lattice defined by:
$$Q= \bigoplus_{i\in \hat{I}}\mathbb{Z}\alpha_i.$$
Subsequently $Q_0$ will denote the finite root lattice of the
subalgebra $\frak{g}_{_0}$. Indeed, $Q_0\subseteq Q=Q_0\oplus
\mathbb{Z}\alpha_0$. Let $Q^+=\bigoplus_{i\in
\hat{I}}\mathbb{Z}_{\geq 0}\alpha_i$, then $Q^+_0=Q_0\cap Q^+$.

Introduce the non-degenerate symmetric bilinear form $(\, , \, )$ on $Q$ determined by $(\alpha_i,\,\alpha_j)=d_i a_{ij}$. Let $\delta=\sum_{i\in \hat{I}}r_i\alpha_i\in Q^+$ be the canonical imaginary root of minimal height
such that $(\delta,\,\delta)=0$ and $(\delta,\, \alpha_i)=0, \forall i\in \hat{I}$.  Here the coefficients $r_i$ are unique such that $r_0$ is always $1$, and we have chosen the labels of $A_{2n}^{(2)}$ different
from \cite{K}.

Let $P^{{\lor}}=\{\lambda^{\lor}\in \widehat{\mathfrak h}^{\sigma}| \langle \alpha_i, \lambda^{\lor}\rangle\in \mathbb Z\}$ be the coweight lattice over $\mathbb{Z}$, and define the fundamental coweights
$\omega_i^{\lor}\in P^{{\lor}}$
such that $\langle \alpha_i,\, \omega_j^{\lor}\rangle=\delta_{ij}, \langle c, \omega_j^{\lor}\rangle =0, \forall i\in \hat{I}$. So $d=\omega_0^{\lor}$. Additionally, we are going to introduce another important sublattice $P$ defined by $P=\oplus_{i\in \hat{I}}\mathbb{Z}\omega_i$, where $\omega_i=p_i\omega_i^{\lor}$ for $i\in I$ and $\omega_0=\omega_i^{\lor}=d$, and
$$p_i=\begin{cases}
r, \quad \hbox{if} \quad \sigma(i)=i;\vspace{3pt}\\
1, \qquad \hbox{otherwise}
\end{cases}$$
Let us denote by $P_0^{{\lor}}$ and $P_0$ the finite coweight  and
weight lattice of the subalgebra $\frak{g}_{_0}$ respectively. Then
$P_0^{{\lor}}\subseteq P^{{\lor}}=P_0^{{\lor}}\oplus
\mathbb{Z}\omega_0^{{\lor}}$ and $P_0^{{\lor}}\subseteq
P^{{\lor}}=P_0^{{\lor}}\oplus \mathbb{Z}\omega_0$.

 Recall that the (affine) Weyl group $W$ is generated by $\{s_i|i\in \hat{I}\}\leq Aut(\widehat{\mathfrak h}^{\sigma*})$, where $s_i(\lambda)=
 \lambda-\langle \lambda, \alpha_i^{\lor}\rangle\alpha_i$, thus $s_i(\alpha_j)=\alpha_j-a_{ij}\alpha_i$. The
 Weyl group $W$ acts on $P^{\lor}$ by
 $s_i(\lambda^{\lor})=
 \lambda^{\lor}-\langle \alpha_i, \lambda^{\lor}\rangle \alpha_i^{\lor}$. For any $w\in W$, if $w=s_{i_1}s_{i_2}\ldots s_{i_l}$
 is a reduced expression of $w$, then we define the length $l(w)=l$. Let $W_0$ be the subgroup
 generated by $\{s_i|i\in {I}\}$, thus $W_0$ is the finite Weyl group of $\frak{g}_{_0}$.

 Let $Aut(\Gamma)$ be the group of automorphisms of the Dynkin diagram associated with  $\widehat{\frak{g}}^{\sigma}$. Then $Aut(\Gamma)$ acts on $W$ by $\tau s_i\tau^{-1}=s_{\tau(i)},\, \forall i\in \hat{I},\, \forall \tau\in Aut(\Gamma)$. We denote $\mathcal{T}=Aut(\Gamma)\cap (W_0 \ltimes P^{\lor})$. We further introduce the extended affine Weyl group $\tilde{W}=\mathcal{T}\ltimes W$, which can be written as $W_0 \ltimes P$ for twisted cases. The length function $\ell$ is extended to $\tilde{W}$  by $\ell(\tau w)=\ell(w)$, for $\tau\in \mathcal{T},\, w\in {W}$.

\subsection{Twisted quantum affine algebras}
In this paragraph let us review the definition of the twisted quantum
affine algebra $U_q(\widehat{\frak{g}}^{\sigma})$. Set
$q_i=q^{d_i},\, i\in \hat{I}$. Introduce the $q-$integer by:
$$[n]_i=\frac{q_i^n-q_i^{-n}}{q_i-q_i^{-1}},\, [n]_i!=\prod_{k=1}^{n}[k_i].$$

\begin{defi} The twisted quantum affine algebra $U_q(\widehat{\frak{g}}^{\sigma})$ is an associative $\mathbb{C}(q)-$algebra generated by $\{E_i,\,F_i,\, K_i^{\pm1},\, \gamma^{\pm \frac{1}{2}}\}$ for $i \in \hat{I}$, satisfying the following relations:
\begin{eqnarray*}
&(R 1)\qquad\quad& \gamma^{\pm \frac{1}{2}}\, \hbox{is \, central and} \, \gamma=K_{\delta}  \\
&(R 2)\qquad\quad& K_i K_i^{-1}=1,\quad K_i K_j=K_j K_i ,\, \forall\, i,\,j\in \hat{I},  \\
&(R 3)\qquad\quad& K_i E_j K_i^{-1}=q_i^{a_{ij}}E_j, \quad\, K_i F_j K_i^{-1}=q_i^{-a_{ij}}F_j,\, \forall\, i,\,j\in \hat{I},  \\
&(R 4)\qquad\quad& [\,E_i,\,F_j\,]=\delta_{ij}\frac{K_{i}-K_{i}^{-1}}
{q_{i}-q_{i}^{-1}},  \, \forall\,  i, \, j \in \hat{I},  \\
&(R 5)\qquad\quad& \sum_{s=0}^{1-a_{ij}}(-1)^{s}\Big[{1-a_{ij}\atop
s}\Big]_{i}
E_{i}^{1-a_{ij}-s}E_j E_{i}^{s}=0, \, \forall\, i \neq j, \hspace{3cm}\\
&(R 6)\qquad\quad&  \sum_{s=0}^{1-a_{ij}}(-1)^{s}\Big[{1-a_{ij}\atop
s}\Big]_{i} F_{i}^{1-a_{ij}-s}F_j F_{i}^{s}=0, \, \forall\, i
\neq j .
\end{eqnarray*}

\end{defi}
\begin{remark}
\,(1)\,There exists a unique { Hopf algebra} structure on
$U_q(\widehat{\frak{g}}^{\sigma})$ with the comultiplication $\Delta$,
~counit $\varepsilon$ and antipode $S$ defined by ($i\in \hat{I}$):
\begin{eqnarray*}
&\Delta(E_{i})=E_{i}\otimes K_i^{-1}  +1 \otimes E_{i} ,  \quad
\Delta(F_{i})=F_{i}\otimes 1 +  K_i\otimes F_{i}
,  \quad \Delta(K_i)=K_i\otimes K_i , \\
&\varepsilon(E_{i})=0 ,  \quad \varepsilon(F_{i})=0 ,  \quad
\varepsilon(K_i)=1, \\
&S(E_{i})=-E_i K_{i},  \quad S(F_{i})=-K_{i}^{-1}F_i ,
 \quad S(K_i)=K_i^{-1}.
\end{eqnarray*}
\,(2)\,Let $U_q^{+}$ (respectively, $ U_q^{-}$) be the subalgebra
of $U_q(\widehat{\frak{g}}^{\sigma})$ generated by the elements
$E_i$ (respectively, $ F_i$) for $i\in \hat{I}$, and let $U_q^{0}$
be the subalgebra of $U_q(\widehat{\frak{g}}^{\sigma})$ generated by
$K_i$ and $\gamma^{\pm \frac{1}{2}}$. The twisted quantum affine
algebra  $U_q(\widehat{\frak{g}}^{\sigma})$ has the {triangular
decomposition:} $$U_q(\widehat{\frak{g}}^{\sigma})\cong U_q^{-} \otimes
U_q^{0} \otimes U_q^{+}.$$
\,(3)\, When $a_{ij}=-2$, the quantum
Serre relation becomes:
 \begin{equation*}
\begin{split}
&E_jE_i^3-[3]_iE_iE_jE_i^2+[3]_iE_i^2E_jE_i-E_i^3E_j\\
&=[\,E_jE_i^2-[2]_{2i}E_iE_jE_i+E_i^2E_j,\, E_i\,]
\end{split}
\end{equation*}
Similarly, if $a_{ij}=-3$, we have the following equality.
 \begin{equation*}
\begin{split}
&E_jE_i^4-[4]_iE_iE_jE_i^3+\frac{[4]_i[3]_i}{[2]_i}E_i^2E_jE_i^2-[4]_iE_i^3E_jE_i+E_i^4E_j\\
&=AE_i^2-[2]_iE_iAE_i+E_i^2A
\end{split}
\end{equation*}
where $A=E_jE_i^2-[2]_{i}E_iE_jE_i+E_i^2E_j$.

From the above identities, it is not hard to see that the higher
degree Serre relations can be derived from the lower degree ones. Also it
is expected that the non-simply-laced Dynkin
diagrams can be obtained from simply-laced ones by the action of $\sigma$.
\end{remark}

The following proposition can be checked easily.
\begin{prop}
There exist an antiautomorphism $\Phi$ (over $\mathbb C(q)$) and  an automorphism $\Omega$ (over $\mathbb C$) of $U_q(\widehat{\frak{g}}^{\sigma})$ defined as follows:
 \begin{equation*}
\begin{split}
\Phi(E_i)=F_i,\quad \Phi(F_i)=E_i,\quad \Phi(K_i)=K_i^{-1},\quad \Phi(\gamma^{\pm \frac{1}{2}})=\gamma^{\mp \frac{1}{2}}, \Phi(q)=q^{-1}\\
\Omega(E_i)=E_i,\quad \Omega(F_i)=F_i,\quad \Omega(K_i)=K_i^{-1},\quad \Omega(\gamma^{\pm \frac{1}{2}})=\gamma^{\mp \frac{1}{2}}, \Omega(q)=q^{-1}.
\end{split}
\end{equation*}
\end{prop}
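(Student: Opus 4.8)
The plan is to verify directly that the assignments $\Phi$ and $\Omega$ are compatible with the defining relations $(R1)$--$(R6)$, so that each extends to a well-defined map on all of $U_q(\widehat{\frak{g}}^{\sigma})$, and then check that each is invertible with the claimed (anti)multiplicativity. Since $U_q(\widehat{\frak{g}}^{\sigma})$ is presented by generators and relations, it suffices to show that the images of the generators satisfy the corresponding relations; by the universal property of such a presentation, the map then extends uniquely. For $\Omega$ I would treat it as a $\mathbb{C}$-algebra homomorphism sending $q\mapsto q^{-1}$ (hence $q_i\mapsto q_i^{-1}$ and $[n]_i\mapsto [n]_i$, since the $q$-integers are symmetric under $q\mapsto q^{-1}$), while for $\Phi$ I would treat it as a $\mathbb{C}(q)$-antihomomorphism, again with $q\mapsto q^{-1}$, so that $\Phi(xy)=\Phi(y)\Phi(x)$.

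First I would handle $\Omega$. Relations $(R1)$--$(R3)$ are immediate: $\Omega(\gamma^{\pm 1/2})=\gamma^{\mp 1/2}$ is still central, $\Omega(K_i)=K_i^{-1}$ still commute, and $K_i^{-1}E_j K_i=q_i^{a_{ij}}E_j$ becomes $q_i^{-a_{ij}}E_j$ after applying $q\mapsto q^{-1}$, matching $(R3)$ for $\Omega(K_i)$; similarly for $F_j$. For $(R4)$, $\Omega([E_i,F_j])=[E_i,F_j]=\delta_{ij}(K_i-K_i^{-1})/(q_i-q_i^{-1})$, and applying $q\mapsto q^{-1}$ to the right side gives $\delta_{ij}(K_i^{-1}-K_i)/(q_i^{-1}-q_i)=\delta_{ij}(K_i^{-1}-K_i)/(q_i-q_i^{-1})\cdot(-1)\cdot(-1)$ — one must be slightly careful with signs here, but it works out to $\delta_{ij}(\Omega(K_i)-\Omega(K_i)^{-1})/(q_i-q_i^{-1})$ as required. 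For the Serre relations $(R5)$--$(R6)$, since the Gaussian binomials $\bigl[{1-a_{ij}\atop s}\bigr]_i$ are invariant under $q\mapsto q^{-1}$ and $\Omega$ fixes each $E_i$, the relation is preserved verbatim. Then $\Omega$ has an inverse given by the same formulas (it is an involution up to the action on $q$), so it is an automorphism.

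Next I would handle $\Phi$. The key subtlety is that $\Phi$ reverses products, so a relation of the form $XY - YX = Z$ becomes $\Phi(Y)\Phi(X) - \Phi(X)\Phi(Y) = \Phi(Z)$, i.e. $[\Phi(X),\Phi(Y)] = -\Phi(Z)$. Thus $(R2)$ is preserved; for $(R3)$, $K_i E_j K_i^{-1} = q_i^{a_{ij}} E_j$ gives $\Phi(K_i^{-1})\Phi(E_j)\Phi(K_i) = q_i^{-a_{ij}}\Phi(E_j)$ after $q\mapsto q^{-1}$, i.e. $K_i F_j K_i^{-1} = q_i^{-a_{ij}} F_j$, which is exactly the second half of $(R3)$; and symmetrically the relation on $F_j$ maps to the relation on $E_j$. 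For $(R4)$, applying $\Phi$ to $E_iF_j - F_jE_i = \delta_{ij}(K_i-K_i^{-1})/(q_i-q_i^{-1})$ yields $\Phi(F_j)\Phi(E_i) - \Phi(E_i)\Phi(F_j) = F_iE_j - E_jF_i$ on the left, and on the right $\delta_{ij}(K_i^{-1}-K_i)/(q_i^{-1}-q_i) = \delta_{ij}(K_i-K_i^{-1})/(q_i-q_i^{-1})$; combining, $[E_j,F_i] = \delta_{ij}(K_i-K_i^{-1})/(q_i-q_i^{-1})$, which is $(R4)$ with $i,j$ swapped — consistent by symmetry of $\delta_{ij}$. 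The Serre relations require the most bookkeeping: applying the anti-automorphism $\Phi$ to the monomial $E_i^{1-a_{ij}-s} E_j E_i^s$ reverses it to $F_i^s F_j F_i^{1-a_{ij}-s}$, and re-indexing the sum $s \mapsto 1-a_{ij}-s$ (using the symmetry $\bigl[{1-a_{ij}\atop s}\bigr]_i = \bigl[{1-a_{ij}\atop 1-a_{ij}-s}\bigr]_i$) recovers exactly the $F$-Serre relation $(R6)$; symmetrically $(R6)$ maps to $(R5)$. Finally $\Phi$ squares to the identity (on generators $\Phi^2$ fixes everything and $\Phi^2(q)=q$), so $\Phi$ is a bijective antihomomorphism, i.e. an antiautomorphism.

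The main obstacle, such as it is, lies in the Serre relations $(R5)$--$(R6)$ under $\Phi$: one has to carefully track how the anti-automorphism reverses the length-$(2-a_{ij})$ monomials and then perform the index substitution on the summation so that the reversed relation matches the other Serre relation on the nose, rather than producing a superficially different identity. Everything else is a short computation with signs and with the $q\mapsto q^{-1}$ invariance of $q$-integers and Gaussian binomials. This is why the statement is flagged in the paper as checkable "easily"; I would present the $\Phi$-Serre verification in one or two lines and leave the remaining relations to the reader.
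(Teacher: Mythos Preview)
Your proposal is correct and is precisely the direct verification the paper has in mind when it says the proposition ``can be checked easily''; the paper gives no further argument, and your relation-by-relation check (with the $q\mapsto q^{-1}$ invariance of $q$-integers and Gaussian binomials, and the re-indexing $s\mapsto 1-a_{ij}-s$ for the Serre relations under $\Phi$) is exactly the standard route. One small wording issue: calling $\Phi$ a ``$\mathbb{C}(q)$-antihomomorphism'' while also imposing $q\mapsto q^{-1}$ is self-contradictory (this tension is already present in the paper's own phrasing), but your actual computations consistently use $q\mapsto q^{-1}$ and are correct.
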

We recall Lusztig's braid group associated to $W$, which acts as an automorphism group of the twisted quantum affine algebra $U_q(\widehat{\frak{g}}^{\sigma})$ (\cite{L1}). For simplicity, we denote by $T_i=T_{s_i},\, i\in \hat{I}$. The actions of $T_i$ and $T_i^{-1}$ on Chevalley generators  are defined as follows:
\begin{eqnarray*}
&&T_i(E_{i})=-F_i K_i, \quad T_i(F_{i})=-K_i^{-1}E_i,\\
&&T_i(E_j)=\sum_{s=0}^{-a_{ij}}(-1)^{s-a_{ij}}q_i^{-s}E_i^{(-a_{ij}-s)}E_j E_i^{(s)},\, \forall \, i\neq j\in \hat{I},\\
&&T_i(F_j)=\sum_{s=0}^{-a_{ij}}(-1)^{s-a_{ij}}q_i^{s}F_i^{(s)}F_j F_i^{(-a_{ij}-s)}, \,\forall \, i\neq j\in \hat{I}\\
&&T_i^{-1}(E_{i})=- K_i^{-1} F_i, \quad T_i^{-1}(F_{i})=-E_i K_i,\\
&&T_i^{-1}(E_j)=\sum_{s=0}^{-a_{ij}}(-1)^{s-a_{ij}}q_i^{-s}E_i^{(s)}E_j E_i^{(-a_{ij}-s)},\, \forall \, i\neq j\in \hat{I},\\
&&T_i^{-1}(F_j)=\sum_{s=0}^{-a_{ij}}(-1)^{s-a_{ij}}q_i^{s}F_i^{(-a_{ij}-s)}F_j F_i^{(s)}, \,\forall \, i\neq j\in \hat{I}
\end{eqnarray*}
where $E_i^{(s)}=\frac{E_i^s}{[s]_i!}$ and $F_i^{(s)}=\frac{F_i^s}{[s]_i!}$.

We extent the braid group action to the extended Weyl group $\tilde{W}$ by defining $T_{\tau}$ as:
$$T_{\tau}(E_i)=E_{\tau(i)},\,T_{\tau}(F_i)=F_{\tau(i)},\, T_{\tau}(K_i)=K_{\tau(i)}.$$

\begin{remark}\, The braid group action $T_i$ commutes with $\Phi$, that is $T_i\Phi=\Phi T_i$.
Furthermore, one has $\Omega T_i=T_i^{-1}\Omega$.
\end{remark}

Note that $\tilde{W}=W_0 \ltimes P$, some properties about
$T_{\omega_i}$ will be discussed which are almost the same to non
twisted cases, where $\omega_i=p_i \omega_i^{\lor}$.

Finally for each $i\in\hat{I}$, we define inductively
the twisted derivation $r_i$ of $U_q^+$ by $r_i(E_j)=\delta_{ij}$, and
\begin{equation*}
r_i(xy)=q^{(|y|, \alpha_i)}r_i(x)y+xr_i(y), \qquad x\in U_{q,|x|}^+, y\in U_{q,|y|}^+.
\end{equation*}
Here $U_{q, \beta}^+$ is the $Q^+$-graded subspace of $U_q^+$ with the weight $\beta$. The following result is from Lusztig \cite{L1}.

\begin{lemm} \label{L:comm-br}
1) If $x\in U_q^+$ and $r_i(x)=0$ for all $i\in\hat{I}$, then $x=0$.

2) One has $\{x\in U_q^+| r_i(x)=0\}=\{x\in U_q^+| T_i^{-1}(x)\in U_q^+\}$.
\end{lemm}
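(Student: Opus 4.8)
The plan is to reconstruct Lusztig's argument \cite{L1} in two independent steps: for 1) I would use his nondegenerate symmetric bilinear form on $U_q^+$ together with the fact that $r_i$ is (up to a scalar) the transpose of right multiplication by $E_i$; for 2) I would compare the two subalgebras through a common set of generators and the PBW-type decomposition of $U_q^+$ relative to $E_i$.

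For 1), let $b(\cdot,\cdot)$ denote Lusztig's form on $U_q^+$: it is $Q^+$-graded (distinct weight spaces are $b$-orthogonal), nondegenerate on each weight space $U_{q,\nu}^+$, and obeys an adjunction $b(x,\,yE_i)=c_i\,b(r_i(x),\,y)$ with $c_i\in\mathbb{C}(q)^{\times}$ — that is, $r_i$ is, up to a nonzero scalar, the transpose of right multiplication by $E_i$ (this is built into Lusztig's construction of $b$ from the skew-Leibniz rule defining $r_i$). Decomposing $x$ into weight components, it suffices to treat $x\in U_{q,\nu}^+$; when $\nu=0$ there is nothing to prove (and in every application $x$ has positive weight), so assume $\nu\ne 0$. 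Since $U_q^+$ is generated by $\{E_i:i\in\hat{I}\}$ — every monomial $E_{i_1}\cdots E_{i_k}$ with $k\ge 1$ equals $(E_{i_1}\cdots E_{i_{k-1}})E_{i_k}$ — one has $U_{q,\nu}^+=\sum_{i\in\hat{I}}U_{q,\nu-\alpha_i}^+E_i$ for $\nu\ne 0$. Then for any $z=\sum_i y_iE_i\in U_{q,\nu}^+$ one computes $b(x,z)=\sum_i c_i\,b(r_i(x),y_i)=0$ by hypothesis, and nondegeneracy of $b$ on $U_{q,\nu}^+$ forces $x=0$.

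For 2), set $A_i=\{x\in U_q^+:r_i(x)=0\}$ and $B_i=\{x\in U_q^+:T_i^{-1}(x)\in U_q^+\}$. Since $T_i$ is an algebra automorphism, $B_i=U_q^+\cap T_i(U_q^+)$, and both $A_i$ (by the skew-Leibniz rule for $r_i$) and $B_i$ are subalgebras of $U_q^+$. The first thing I would check is that the two share a natural generating set: from the explicit formula for $T_i$, a short telescoping computation gives $r_i(T_i(E_j))=0$ for every $j\ne i$, while $r_i(E_j)=\delta_{ij}=0$ for $j\ne i$, so $A_i$ contains the subalgebra $C_i$ generated by $\{E_j,\ T_i(E_j):j\ne i\}$; on the other hand $T_i^{-1}(E_j)\in U_q^+$ for $j\ne i$ (again by the explicit formula) and $T_i^{-1}(T_i(E_j))=E_j\in U_q^+$, so applying the homomorphism $T_i^{-1}$ shows $C_i\subseteq B_i$ as well. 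The crucial structural fact, which is Lusztig's theorem on the PBW basis of $U_q^+$ attached to a reduced expression beginning with $s_i$, is that $A_i=C_i$ and that multiplication yields $U_q^+=\bigoplus_{n\ge 0}A_i\,E_i^{(n)}$. Granting this, $A_i=C_i\subseteq B_i$; for the reverse inclusion, given $x\in U_q^+$ with $T_i^{-1}(x)\in U_q^+$, I would expand $x=\sum_{n\ge 0}x_nE_i^{(n)}$ with $x_n\in A_i=C_i$ and apply $T_i^{-1}$. Since $T_i^{-1}(C_i)\subseteq U_q^+$ while $T_i^{-1}(E_i)=-K_i^{-1}F_i$, passing to the triangular decomposition $U_q(\widehat{\frak{g}}^{\sigma})\cong U_q^-\otimes U_q^0\otimes U_q^+$ shows that, after reduction to normal form, the $n$-th summand contributes to $F_i$-degrees at most $n$ with its top, $F_i$-degree-$n$ part nonzero precisely when $x_n\ne 0$; hence the largest $m$ with $x_m\ne 0$ produces a nonzero $F_i$-degree-$m$ component of $T_i^{-1}(x)$ that no other summand can cancel, which forces $m=0$ and $x=x_0\in A_i$. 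Thus $A_i=B_i$.

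The step I expect to be the real obstacle is precisely this structural input — the equality $\ker r_i=C_i$ and the freeness $U_q^+=\bigoplus_n(\ker r_i)E_i^{(n)}$: it rests on the PBW-type basis of $U_q^+$ and its compatibility with the braid operator $T_i$, which is exactly the content of \cite{L1}. For that reason the present lemma is quoted directly from Lusztig rather than reproved here, and the verification sketched above is really a guide to where in \cite{L1} each ingredient lives.
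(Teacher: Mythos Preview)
The paper does not give its own proof of this lemma: the sentence immediately preceding it reads ``The following result is from Lusztig \cite{L1}'', and no argument is supplied. So there is nothing to compare against beyond the citation itself. Your proposal correctly recognizes this and supplies a faithful outline of Lusztig's actual proof --- part 1) via the nondegenerate bilinear form on $U_q^+$ and the adjunction between $r_i$ and right multiplication by $E_i$, and part 2) via the PBW-type decomposition $U_q^+=\bigoplus_{n\ge 0}(\ker r_i)\,E_i^{(n)}$ and its compatibility with $T_i$. One minor point: in 1) the conclusion in Lusztig's formulation is that $x$ is a scalar (weight zero), not literally $x=0$; you note this yourself when handling the case $\nu=0$, and it is harmless for every use made of the lemma in the paper.
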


\section{Vertex subalgebras $U_q^{(i)}$}
There are three different lengths of real roots for the type of $A_{2n}^{(2)}$, which requires
a different treatment from other twisted types. For each $i\in I$ we construct
a copy of $U_q(\widehat{sl}_2)$ inside $U_q((X_N^{(r)})$ for $X_n\neq A_{2n}$. For the
case of $(A_{2n}^{(2)}, n)$ we will construct a subalgebra isomorphic to
$U_q(A_{2}^{(2)})$.

\subsection{Root system}
The root systems $\Delta$ of the twisted affine algebra $\widehat{\frak{g}}^{\sigma}$ is given by $\Delta=\Delta^{+}\cup -\Delta^{+}$ and $\Delta^{+}=\Delta_{>}\cup\Delta_{0}\cup\Delta_{<}$,
where $\Delta_{>}$ $\Delta_{0}$ and $\Delta_{<}$ are list as follows (see \cite{K}):
$$\Delta_{0}=\{k\delta|k>0\}\times I,$$
For the type $A_{2n}^{(2)}$
$$\Delta_{>}=\{k\delta+\alpha|\alpha\in \dot{\Delta}^+,k\geq 0\}\cup \{(2k+1)\delta+2\alpha|\alpha\in \dot{\Delta}^+,(\alpha,\,\alpha)=2,\,k\geq 0\},$$
$$\Delta_{<}=\{k\delta-\alpha|\alpha\in \dot{\Delta}^{+},k> 0\}\cup \{(2k+1)\delta-2\alpha|\alpha\in \dot{\Delta}^{+},(\alpha,\,\alpha)=2,\,k>0\},$$
for other types
$$\Delta_{>}=\{k d_{\alpha}\delta+\alpha|\alpha\in \dot{\Delta}^+,\,k\geq 0\},$$
$$\Delta_{<}=\{k d_{\alpha}\delta-\alpha|\alpha\in \dot{\Delta}^+,\,k>0\},$$
where $\dot{\Delta}^{+}$ is the positive roots system of $\frak{g}_{_0}$ and $d_{\alpha}=\frac{(\alpha, \alpha)}{2}$.
Here we call $\Delta^{+}$ the set of  positive roots of $\widehat{\frak{g}}^{\sigma}$.

\subsection{Quantum root vectors}\, In order define the quantum root vectors, we review some notations
from Beck-Nakajima \cite{BN}. For $\omega_i\in P, \forall i\in I$, we choose $\tau_i\in \mathcal{T}$ such that $\omega_i\tau_i^{-1}\in W$. Choose a reduced expression of $\omega_i\tau_i^{-1}$ for each $i\in I$. We fix a reduced expression of $\omega_n\omega_{n-1}\cdots \omega_1$ as follows:
$\omega_n\omega_{n-1}\cdots \omega_1=s_{i_1}s_{i_2}\cdots s_{i_m}\tau$, where $\tau=\tau_n\cdots \tau_1$.

We define a doubly infinite sequence
$$\mathbf{h}=(\dots, i_{-1}, i_0, i_1, \dots),$$
by setting $i_{l+m}=\tau(i_l)$ for $l\in \mathbb{Z}$.

Then we have $$\Delta_{>}=\{\alpha_{i_0},\, s_{i_0}(\alpha_{i_{-1}}),\, s_{i_0}s_{i_{-1}}(\alpha_{i_{-2}}),\,\dots \},$$
$$\Delta_{<}=\{\alpha_{i_1},\, s_{i_1}(\alpha_{i_{2}}),\, s_{i_1}s_{i_{2}}(\alpha_{i_3}),\,\dots \}.$$

Set $$\beta_k=\begin{cases}
s_{i_{0}}s_{i_{-1}}\cdots s_{i_{k+1}}(\alpha_{i_k}), \quad \hbox{if} \quad k\leq 0;\vspace{3pt}\\
s_{i_{1}}s_{i_{2}}\cdots s_{i_{k-1}}(\alpha_{i_k}), \qquad \hbox{if} \quad k>0.
\end{cases}$$
Define a total order on $\Delta^+$ by setting
$$\beta_0<\beta_{-1}<\beta_{-2}<\dots < \delta^{(1)}< \dots <\delta^{(n)}< 2\delta^{(1)}<\dots < \beta_2<\beta_1,$$
where $k\delta^{(i)}$ denotes $(k\delta, i)\in\Delta_0$.

The root vectors for each element of $\Delta_{>}\cup \Delta_{<}$ can be defined as follows:

$$E_{\beta_k}=\begin{cases}
T_{i_{0}}^{-1}T_{i_{-1}}^{-1}\cdots T_{i_{k+1}}^{-1}(E_{\alpha_{i_k}}), \quad \hbox{if} \quad k\leq 0;\vspace{3pt}\\
T_{i_{1}}T_{i_{2}}\cdots T_{i_{k-1}}(E_{\alpha_{i_k}}), \qquad \hbox{if} \quad k>0.
\end{cases}$$
It follows from \cite{L1} that the elements $E_{\beta_{k}}\in U_q^+$.

\begin{remark}\label{r1}\, For a positive root $\alpha=\omega_1(\alpha_i)$ in the root system $\Delta_{>}\cup\Delta_{<}$ of $U_q(\hat{\frak{g}}^{\sigma})$, there exists another presentation $\alpha=\omega_2(\alpha_j)$ where
$\omega_i, \omega_2$ are in the Weyl group $W$ and $\alpha_i, \alpha_j$ are simple roots. Then the quantum root vector $E_{\alpha}$ can be defined by two ways:
\begin{eqnarray*}
E_{\alpha}&\doteq &T_{\omega_1}(E_i)\\
E'_{\alpha}&\doteq &T_{\omega_2}(E_j)
\end{eqnarray*}

Actually, the two definitions agree up to a constant, because there exists $\omega\in W_0$ such that $\alpha_j=\omega(\alpha_i)$, which means that $\omega_1=\omega_2\omega$ and $l(\omega_1)=l(\omega_2)+l(\omega)$, then we have $$T_{\omega_1}(E_i)=T_{\omega_2\omega}(E_i)=c_1(q)T_{\omega_2}T_{\omega}(E_i)=c_2(q)T_{\omega_2}(E_j),$$
where $c_i(q)$ are functions in $q$.
\end{remark} \smallskip

Furthermore, if $\beta_k=\alpha_i$ for some $i\in I$, then $E_{\beta_{k}}=c(q)E_{\alpha_i}$ for
a function $c(q)$. As usual, define $F_{\beta}=\Phi(E_{\beta})$ for all $\beta\in\Delta_{>}\cup \Delta_{<}$.

Now we introduce the root vectors of Drinfeld generators \cite{BCP}.

\begin{eqnarray*}
&&E_{kp_i\delta+\alpha_i}=T_{\omega_i}^{-k}(E_i),  \quad \hbox{for}\quad k\geq 0,\\
&&E_{kp_i\delta-\alpha_i}=T_{\omega_i}^{k}T_i^{-1}(E_i), \quad \hbox{for}\quad k>0.
\end{eqnarray*}

Note that the definitions are not so different from those of non twisted cases up to some slight adjustments because of the difference between their root systems.

\subsection{Vertex subalgebra $U_q^{(i)}$}
\, Let subalgebra $U_q^{(i)}$ of twisted quantum affine algebra $U_q(\widehat{\frak{g}}^{\sigma})$ be generated by $E_i,\, T_{\omega_i}T_i^{-1}(E_i),\, F_i,\,T_{\omega_i}T_i^{-1}(F_i),\, K_i^{\pm1},\, T_{\omega_i}T_i^{-1}(K_i^{\pm 1})$ and $\gamma^{\pm \frac{1}{2}}$ for $i\in I$.

It is clear that the vertex subalgebra $U_q^{(i)}$ is stable under the actions of $\Phi,\, T_i,$ and $T_{\omega_i}$. Additionally, $U_q^{(i)}$ is pointwise fixed by $T_{\omega_j}$ for $j\neq i$.

We list the following result which is already proved in \cite{L2} (also see \cite{B}).
\begin{lemm} Let $i\neq j\in I$, one has that
 \begin{eqnarray*}
(1)&& \quad \ell(\omega_i s_i\omega_i)=2\ell(\omega_i)-1, \\
(2)&& T_i^{-1} T_{\omega_i}T_i^{-1}=T_{\omega_i}^{-1}\prod_{j\neq i}T_{\omega_j}^{-\frac{p_ia_{ij}}{p_j}}, \\
(3)&& T_{\omega_i}T_i^{-1}(E_j)=T_{\omega_j}^{-\frac{p_ia_{ij}}{p_j}}T_i(E_j).
\end{eqnarray*}
\end{lemm}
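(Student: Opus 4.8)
The plan is to reduce all three assertions to the combinatorics of the extended affine Weyl group $\tilde{W}=W_0\ltimes P$ and to Lusztig's braid relations on $U_q(\widehat{\mathfrak{g}}^{\sigma})$, viewing $\omega_i=p_i\omega_i^{\lor}$ as the translation by a dominant coweight (dominant since $\langle\alpha_j,\omega_i^{\lor}\rangle=\delta_{ij}\geq 0$). Part (1) is then a length computation in $\tilde{W}$: from the standard length formula for $W_0\ltimes P$ one has $\ell(\omega_i^2)=2\ell(\omega_i)$, and since $\omega_i(\alpha_i)=\alpha_i-p_i\delta$ is a negative affine root, $\ell(\omega_i s_i)=\ell(\omega_i)-1$; inspecting the inversion set of $\omega_i s_i$ against that of $\omega_i^{-1}$ shows that a reduced word for $\omega_i s_i$ followed by one for $\omega_i$ is again reduced, so $\ell(\omega_i s_i\omega_i)=\ell(\omega_i s_i)+\ell(\omega_i)=2\ell(\omega_i)-1$. (This is precisely what is proved in \cite{L2}; alternatively one may quote it directly.)

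Next I record the underlying identity in $\tilde{W}$. From $s_i(\omega_i^{\lor})=\omega_i^{\lor}-\alpha_i^{\lor}$ and $\alpha_i^{\lor}=\sum_{k\in I}a_{ik}\omega_k^{\lor}$ one gets $s_i(\omega_i)=-\omega_i-\sum_{j\neq i}\frac{p_ia_{ij}}{p_j}\omega_j$, hence, writing translations multiplicatively,
\begin{equation*}
 s_i\,\omega_i\,s_i=\omega_i^{-1}\prod_{j\neq i}\omega_j^{-p_ia_{ij}/p_j},\qquad\text{equivalently}\qquad \omega_i s_i\omega_i s_i=\prod_{j\neq i}\omega_j^{-p_ia_{ij}/p_j}=:\kappa .
\end{equation*}
Since $a_{ij}\leq 0$ for $j\neq i$, the exponents $-p_ia_{ij}/p_j$ are non-negative, $\kappa$ is a product of commuting dominant translations, and $T_{\kappa}=\prod_{j\neq i}T_{\omega_j}^{-p_ia_{ij}/p_j}$ is unambiguous.

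For part (2) I multiply the asserted equality by $T_{\omega_i}$ on the left and telescope, using $T_wT_{w'}=T_{ww'}$ whenever $\ell(ww')=\ell(w)+\ell(w')$, the identity $T_{\omega_i}T_i^{-1}=T_{\omega_i s_i}$ (valid because $\ell(\omega_i s_i)=\ell(\omega_i)-1$), and the fact that $\omega_i s_i\omega_i(\alpha_i)=-\alpha_i<0$ (so $\ell(\omega_i s_i\omega_i s_i)=\ell(\omega_i s_i\omega_i)-1$):
\begin{equation*}
 T_{\omega_i}\big(T_i^{-1}T_{\omega_i}T_i^{-1}\big)=T_{\omega_i s_i}T_{\omega_i}T_i^{-1}=T_{\omega_i s_i\omega_i}T_i^{-1}=T_{\omega_i s_i\omega_i s_i}=T_{\kappa},
\end{equation*}
where the second equality is part (1). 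Cancelling $T_{\omega_i}$ gives (2). For part (3) I evaluate (2) on $E_j$ with $j\neq i$. On the right-hand side $T_{\omega_{j'}}(E_j)=E_j$ for $j'\neq i,j$, since $E_j\in U_q^{(j)}$ and $U_q^{(j)}$ is pointwise fixed by $T_{\omega_{j'}}$ ($j'\neq j$); similarly $T_{\omega_i}^{-1}$ fixes $T_{\omega_j}^{-p_ia_{ij}/p_j}(E_j)\in U_q^{(j)}$, so $T_i^{-1}T_{\omega_i}T_i^{-1}(E_j)=T_{\omega_j}^{-p_ia_{ij}/p_j}(E_j)$. Applying $T_i$ and using that $T_i$ commutes with $T_{\omega_j}$ when $i\neq j$ (in $\tilde{W}$, $s_i$ commutes with the translation $\omega_j$ because $\langle\alpha_i,\omega_j\rangle=0$, and $\ell(s_i\omega_j)=\ell(\omega_j)+1$, whence $T_{s_i}T_{\omega_j}=T_{\omega_j}T_{s_i}$) yields $T_{\omega_i}T_i^{-1}(E_j)=T_{\omega_j}^{-p_ia_{ij}/p_j}T_i(E_j)$.

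The main obstacle is concentrated in the two type-dependent inputs: the length identity of part (1), and the fact that each $p_ia_{ij}/p_j$ is a (non-negative) integer. Both require descending to the explicit affine root datum of $\widehat{\mathfrak{g}}^{\sigma}$ and, in particular, careful attention to the three distinct real-root lengths occurring in type $A_{2n}^{(2)}$; away from that type the argument is essentially identical to the untwisted case treated in \cite{B}. Everything downstream — parts (2) and (3) — is then purely formal once one also grants the statements made just before the lemma, that $U_q^{(i)}$ is stable under $T_i$ and $T_{\omega_i}$ and is pointwise fixed by $T_{\omega_j}$ for $j\neq i$; these in turn follow from $\langle\alpha_i,\omega_j\rangle=p_i\delta_{ij}$ together with the same braid-length bookkeeping.
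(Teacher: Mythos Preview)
Your proposal is correct and follows precisely the approach of \cite{L2} and \cite{B}, which is all the paper does here: it states the lemma and cites those references without reproducing a proof. Your argument---computing $s_i\omega_i s_i$ as a translation, lifting the Weyl-group identity $\omega_i s_i\omega_i s_i=\prod_{j\neq i}\omega_j^{-p_ia_{ij}/p_j}$ to the braid group via the length additivity supplied by part~(1), and then specializing to $E_j$ using $T_{\omega_k}(E_j)=E_j$ for $k\neq j$ and $T_iT_{\omega_j}=T_{\omega_j}T_i$---is exactly Beck's; you have simply written out the details that the paper omits.
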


The following statements are based on the construction of \cite{B}. %also see \cite{Da2}.
\begin{lemm} We have
 \begin{eqnarray*}
(1)&& [\,T_{\omega_i}T_i^{-1}(E_i),\,F_i\,]=0, \\
(2)&& E_i T_{\omega_i}^{-1}(E_i)-q_i^2 T_{\omega_i}^{-1}(E_i)E_i=0, \\
(3)&& T_{\omega_i}(K_i^{\pm1})=\gamma K_i^{\pm1}, \\
(4)&& K_i T_{\omega_i}^{-1}(E_i)=q_i^2 T_{\omega_i}^{-1}(E_i)K_i.
\end{eqnarray*}
\end{lemm}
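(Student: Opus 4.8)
The plan is to reduce everything to computations inside the rank-one and rank-two sublattice/subalgebra picture, using the preceding Lemma (parts (1)–(3)) together with Lemma~\ref{L:comm-br} (the Lusztig derivation criterion). I would prove the statements essentially in the order (3), (1), (4), (2), since (3) is purely weight-theoretic, (1) feeds into (2) and (4) via the triangular decomposition, and (2), (4) are the genuinely ``quantum'' $U_q(\widehat{sl}_2)$-type relations.

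\smallskip
\emph{Step 1: identity (3).} This is a statement about $U_q^0$ only. Since $T_{\omega_i}$ acts on $K_\mu$ by $K_{\omega_i(\mu)}$ (extending the braid action to $\widetilde W = W_0\ltimes P$), I would compute $\omega_i(\alpha_i^\vee)$ in $P^\vee$: writing $\omega_i = p_i\omega_i^\vee$ as a translation, $t_{\omega_i}(\alpha_i^\vee) = \alpha_i^\vee - \langle \delta,\omega_i\rangle\,(\text{something}) $, and more precisely the translation part contributes a multiple of $c$. Concretely $T_{\omega_i}(K_i) = K_i\,\gamma^{\langle \alpha_i,\omega_i\rangle}$ up to checking that $\langle \alpha_i,\omega_i\rangle = \langle \alpha_i, p_i\omega_i^\vee\rangle = p_i = $ the coefficient making $K_\delta=\gamma$ come out with exponent $1$; one must track $r_i$ and $d_i$ so that the power of $\gamma$ is exactly $1$, matching $(R1)$. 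This is bookkeeping with the affine Cartan data, no obstacle.

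\smallskip
\emph{Step 2: identity (1).} Here I would invoke Lemma~\ref{L:comm-br}(2): $[T_{\omega_i}T_i^{-1}(E_i),F_i]=0$ is equivalent, after applying $\Phi$ and using that $\Phi$ is an antiautomorphism with $\Phi(K_i)=K_i^{-1}$, to a statement that $T_{\omega_i}T_i^{-1}(E_i)$ lies in the kernel of the twisted derivation $r_i$, i.e.\ $r_i\big(T_{\omega_i}T_i^{-1}(E_i)\big)=0$. By part (2) of the earlier Lemma, $T_i^{-1}T_{\omega_i}T_i^{-1} = T_{\omega_i}^{-1}\prod_{j\neq i}T_{\omega_j}^{-p_ia_{ij}/p_j}$, so $T_i^{-1}$ applied to our element stays in $U_q^+$; by Lemma~\ref{L:comm-br}(2) this is exactly the condition $r_i=0$. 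Then $[T_{\omega_i}T_i^{-1}(E_i),F_i]$ has weight $\alpha_i' - \alpha_i$ for $\alpha_i' = \omega_i(\alpha_i)$ (a positive root $\neq \alpha_i$), lies in $U_q^0$-span by triangular decomposition only if the weight is $0$, which forces the bracket into $U_q^+U_q^0 \oplus U_q^0U_q^-$, and the derivation vanishing kills both pieces. I expect this to require a short argument that $[\,\cdot\,,F_i]$ on a weight-$\beta$ element of $U_q^+$ with $\langle\beta,\alpha_i^\vee\rangle\geq 0$ and $r_i(\cdot)=0$ is zero — this is a standard consequence of $(R4)$ and the definition of $r_i$.

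\smallskip
\emph{Step 3: identities (2) and (4) — the main obstacle.} These are the defining relations of a quantum Heisenberg/$U_q(\widehat{sl}_2)$-type structure and are where the real work lies. For (2), $E_i\,T_{\omega_i}^{-1}(E_i) = q_i^{2}\,T_{\omega_i}^{-1}(E_i)\,E_i$: the clean approach is to apply $T_{\omega_i}$ to both sides and instead prove the equivalent $T_{\omega_i}(E_i)\,E_i = q_i^{2}\,E_i\,T_{\omega_i}(E_i)$, then use part (1) of the earlier Lemma, $\ell(\omega_i s_i\omega_i) = 2\ell(\omega_i)-1$, to recognize $T_{\omega_i}(E_i) = \pm T_{\omega_i}T_i^{-1}(\text{something})$ up to normalization, i.e.\ that $T_{\omega_i}(E_i)$ and $E_i$ are the two ``ends'' of a convex order on $\Delta_>$. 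The $q_i^2$ should emerge as $q^{(\alpha_i,\omega_i(\alpha_i))}$ or $q^{(\alpha_i,\alpha_i)}$ once one checks $(\omega_i(\alpha_i),\alpha_i) = (\alpha_i,\alpha_i) = 2d_i$; the sign and the passage from $q$-commutation of root vectors in a rank-two subsystem to this exact relation is the delicate point, handled via the Levendorskii–Soibelman straightening rules applied to the reduced word behind $\omega_i s_i \omega_i$. For (4), apply $(R3)$: $K_i$ conjugates $T_{\omega_i}^{-1}(E_i)$ by $q_i^{\langle \beta, \alpha_i^\vee\rangle}$ where $\beta$ is the weight of $T_{\omega_i}^{-1}(E_i)$, namely $\beta = t_{\omega_i}^{-1}(\alpha_i) = \alpha_i - \langle \alpha_i,\omega_i\rangle\delta$ (plus lower terms), and $\langle\delta,\alpha_i^\vee\rangle = 0$ forces the exponent to be $\langle\alpha_i,\alpha_i^\vee\rangle = 2$; so (4) is actually immediate from (3) and $(R3)$ once the weight of $T_{\omega_i}^{-1}(E_i)$ is identified, and I would present it right after (3).

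\smallskip
The genuine obstacle is thus identity (2): converting the abstract fact that $E_i = E_{\beta_k}$ and $T_{\omega_i}^{-1}(E_i) = E_{\beta_{k'}}$ are consecutive real root vectors in the convex order into the precise scalar $q_i^{2}$ with the correct sign. I would handle it by an explicit computation in the rank-two (or rank-one affine) subalgebra generated by $E_i$ and $E_{p_i\delta - \alpha_i}$, exactly as in Beck's $U_q(\widehat{sl}_2)$ computation (\cite{Da1, LSS, B}), transplanted here using that $U_q^{(i)}$ is stable under $T_i, T_{\omega_i}, \Phi$ and pointwise fixed by $T_{\omega_j}$, $j\neq i$, which lets one discard all indices $\neq i$ and reduce to a single affine $SL_2$ (or $A_2^{(2)}$) pattern.
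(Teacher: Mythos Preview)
Your approach is correct and matches the paper's, which gives no argument of its own here but simply cites Beck \cite{B} and Damiani \cite{Da2}; your sketch is precisely a reconstruction of Beck's proof, down to the ordering (3), (4), (1), (2) and the identification of (2) as the genuine $U_q(\widehat{sl}_2)$ computation handled by the Levendorskii--Soibelman/Damiani rank-one calculation.

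One small caveat on Step~2: Lusztig's commutator formula is $xF_i-F_ix=(q_i-q_i^{-1})^{-1}\bigl(r_i(x)K_i^{-1}-K_i\,{}_ir(x)\bigr)$, so $r_i(x)=0$ alone does not give $[x,F_i]=0$; you also need ${}_ir(x)=0$, i.e.\ $T_i\bigl(T_{\omega_i}T_i^{-1}(E_i)\bigr)\in U_q^+$. This second condition follows by the same mechanism (the length identity $\ell(\omega_is_i\omega_i)=2\ell(\omega_i)-1$ forces $s_i(p_i\delta-\alpha_i)=p_i\delta+\alpha_i\in\Delta_{>}$, or alternatively apply $\Omega$ to swap $T_i\leftrightarrow T_i^{-1}$), so your strategy goes through once both derivations are checked. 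The paper itself commits the same elision later when proving Proposition~\ref{prop1}.
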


\begin{prop}\, For $i\in I$, there exists an algebra isomorphism $\varphi_i: \, U_q(A_1^{(1)})\rightarrow U_q^{(i)}$,
defined by
\begin{eqnarray*}
& \varphi_i(E_1)=E_i,\quad &\varphi_i(E_0)=T_{\omega_i}T_i^{-1}(E_i),\\
& \varphi_i(F_1)=F_i,\quad  &\varphi_i(F_0)=T_{\omega_i}T_i^{-1}(F_i),\\
& \varphi_i(K_1^{\pm1})=K_i^{\pm1},\quad &\varphi_i(K_0^{\pm1})=T_{\omega_i}T_i^{-1}(K_i^{\pm1}),\\
& \varphi_i(q)=q_i, \quad  &\varphi_i(\gamma^{\pm \frac{1}{2}})=\gamma^{\pm \frac{1}{2}}.
\end{eqnarray*}
\end{prop}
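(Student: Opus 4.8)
The plan is to show that $\varphi_i$ preserves the defining relations of $U_q(A_1^{(1)})$, so that it is a well-defined algebra homomorphism; that it is surjective, which is immediate from the definition of $U_q^{(i)}$; and that it is injective, using the triangular decomposition together with the Beck--Nakajima PBW basis. Throughout write $\mathcal E_0:=T_{\omega_i}T_i^{-1}(E_i)$, $\mathcal F_0:=T_{\omega_i}T_i^{-1}(F_i)$, $\mathcal K_0^{\pm1}:=T_{\omega_i}T_i^{-1}(K_i^{\pm1})$, and recall that $A_1^{(1)}$ has Cartan matrix with $a_{01}=a_{10}=-2$, so that (since $\varphi_i(q)=q_i$) we have $q_0=q_1=q_i$ and its Serre relations are cubic.

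\textbf{The easy relations.} Relations (R2)--(R4) involving only the index $1$ hold for $E_i,F_i,K_i^{\pm1}$ since they are among the defining relations of $U_q(\widehat{\frak{g}}^{\sigma})$, and applying the automorphism $T_{\omega_i}T_i^{-1}$ to the index-$i$ versions of these relations produces their index-$0$ versions among $\mathcal E_0,\mathcal F_0,\mathcal K_0^{\pm1}$. For (R1), combining $T_i^{-1}(K_i)=K_i^{-1}$ with $T_{\omega_i}(K_i^{\pm1})=\gamma K_i^{\pm1}$ gives $\mathcal K_0=\gamma K_i^{-1}$, hence $\mathcal K_0\mathcal K_1=\gamma$, i.e.\ $\varphi_i(K_\delta)=\gamma$; centrality of $\gamma^{\pm1/2}$ is clear. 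The mixed relations of (R3) then follow from $\mathcal K_0=\gamma K_i^{-1}$ and from $\mathcal E_0$ having $Q$-weight $p_i\delta-\alpha_i$ with $(\delta,\alpha_i)=0$. For the mixed (R4), $[\mathcal E_0,\mathcal F_0]=T_{\omega_i}T_i^{-1}\big((K_i-K_i^{-1})/(q_i-q_i^{-1})\big)=(\mathcal K_0-\mathcal K_0^{-1})/(q_i-q_i^{-1})$, while $[\mathcal E_0,F_i]=0$ is one of the Lemmas above. Finally, every relation involving $F_i$ and $\mathcal F_0$ not yet checked follows by applying the antiautomorphism $\Phi$, which commutes with all the $T_i$ (hence with $T_{\omega_i}$) and satisfies $\Phi(E_j)=F_j$, to a relation already established.

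\textbf{The Serre relations: the main obstacle.} What remains are the two cubic Serre relations mixing the indices $0$ and $1$ (and their $\Phi$-images for the $F$'s), the first being
$$E_i^3\mathcal E_0-[3]_iE_i^2\mathcal E_0E_i+[3]_iE_i\mathcal E_0E_i^2-\mathcal E_0E_i^3=0.$$
By the identity recorded in the Remark on the $a_{ij}=-2$ Serre relation, this is equivalent to the quadratic bracket identity $[\,\mathcal E_0E_i^2-(q_i^2+q_i^{-2})E_i\mathcal E_0E_i+E_i^2\mathcal E_0,\,E_i\,]=0$ (and similarly for the other one), which I would attack by the $q$-bracket method of \cite{J1} (see also \cite{ZJ,JZ}), substituting $\mathcal E_0=T_{\omega_i}T_i^{-1}(E_i)$ and invoking the last two Lemmas. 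Two structural facts cut down the work. First, $T_i^{-1}(\mathcal E_0)=T_i^{-1}T_{\omega_i}T_i^{-1}(E_i)=T_{\omega_i}^{-1}(E_i)=E_{p_i\delta+\alpha_i}\in U_q^+$ — here one uses $T_i^{-1}T_{\omega_i}T_i^{-1}=T_{\omega_i}^{-1}\prod_{j\ne i}T_{\omega_j}^{-p_ia_{ij}/p_j}$ together with the fact that each $T_{\omega_j}$ with $j\ne i$ fixes $E_i$ — so $r_i(\mathcal E_0)=0$ by Lemma~\ref{L:comm-br}(2). Second, $E_i$ and $E_{p_i\delta+\alpha_i}=T_{\omega_i}^{-1}(E_i)$ $q$-commute. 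Since the Serre element $z$ lies in $U_q^+$, Lemma~\ref{L:comm-br}(1) reduces $z=0$ to checking $r_j(z)=0$ for all $j\in\hat{I}$: the twisted Leibniz rule makes $r_i(z)=0$ automatic from $r_i(\mathcal E_0)=0$, and for $j\ne i$ it turns $r_j(z)$ into the analogous Serre combination (twisted by powers of $q$) applied to $r_j(\mathcal E_0)$, which one annihilates using how $r_j(\mathcal E_0)$ interacts with $E_i$. This last step — resting on the root vector computations of \cite{B,BN,Da2} and on the explicit $q$-bracket bookkeeping of \cite{J1,ZJ} — is the technical heart of the argument. The $F$-versions follow by applying $\Phi$.

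\textbf{Injectivity.} The map $\varphi_i$ sends $U_q(A_1^{(1)})^{\pm}$ into $U_q^{\pm}$ (note $E_i,\mathcal E_0=E_{p_i\delta-\alpha_i}\in U_q^+$, and use $\Phi$ for the minus part) and $U_q(A_1^{(1)})^{0}$ into $U_q^{0}$, so by the triangular decompositions of the two algebras it suffices to prove injectivity on each of the three factors. On the Cartan part this is the algebraic independence of $K_i$ and $\gamma^{\pm1/2}$ in $U_q^0$. On $U_q(A_1^{(1)})^{+}$ — and then on $U_q(A_1^{(1)})^{-}$ via $\Phi$ — one checks on generators that $\varphi_i$ intertwines the braid operators, $\varphi_iT_1=T_i\varphi_i$ and $\varphi_iT_{\omega_1}=T_{\omega_i}\varphi_i$, using the last two Lemmas and the braid relations in $\tilde{W}$; consequently $\varphi_i$ carries the real root vectors $E_{k\delta\pm\alpha_1}$ and the imaginary root vectors of $U_q(A_1^{(1)})^{+}$ to scalar multiples of $E_{kp_i\delta\pm\alpha_i}$ and of the corresponding imaginary root vectors of $U_q(\widehat{\frak{g}}^{\sigma})^{+}$. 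As the latter form part of the Beck--Nakajima PBW basis attached to the sequence $\mathbf{h}$, the images of the PBW monomials of $U_q(A_1^{(1)})^{+}$ are linearly independent, so $\varphi_i|_{U_q^{+}}$ is injective. Hence $\ker\varphi_i=0$, and $\varphi_i$ is an isomorphism.
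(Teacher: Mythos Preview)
The paper does not actually prove this proposition: it is stated immediately after Lemmas 3.1--3.2 and is implicitly deferred to \cite{B} (and \cite{Da2}), where the analogous untwisted statement is established. Your approach---verifying the $U_q(A_1^{(1)})$ relations on the proposed images, handling the cubic Serre relations via Lemma~\ref{L:comm-br} and the twisted derivations $r_j$, and then deducing injectivity by matching PBW monomials with a subset of the Beck--Nakajima basis---is precisely Beck's argument, so it is fully aligned with what the paper is invoking; in fact your sketch is more explicit than anything the paper provides, and the injectivity step via PBW is exactly how one justifies ``isomorphism'' rather than merely ``homomorphism''.

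One small caution: your claim that $r_i(z)=0$ is ``automatic'' from $r_i(\mathcal E_0)=0$ still requires the standard $q$-binomial identity (the same computation that shows the quantum Serre elements lie in the radical of Lusztig's form), and for $j\neq i$ the vanishing of $r_j(z)$ ultimately reduces, as you indicate, to the $q$-commutation $E_iT_{\omega_i}^{-1}(E_i)=q_i^{2}T_{\omega_i}^{-1}(E_i)E_i$ of Lemma 3.2(2) rather than to an arbitrary interaction of $r_j(\mathcal E_0)$ with $E_i$. With that understood, the argument is correct.
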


\subsection{Relations of imaginary root vectors}
 Now we define the positive imaginary root vectors. For $k>0$ and $i \in I$ let $$\bar{\psi}_{i}(p_ik)=E_{kp_i\delta-\alpha_i}E_i-q_i^{-2}E_iE_{kp_i\delta-\alpha_i},$$
then we use $\bar{\varphi}_i(p_ik)$ to denote $\Phi(\bar{\psi}_i(p_ik))$.

Define the elements $E_{i\,kp_i\delta}\in U_q^+$ by the functional equation

$$exp\big((q_i-q_i^{-1})\sum_{k>0}E_{i,\,kp_i\delta}u^k \big)=1+(q_i-q_i^{-1})\sum_{k>0}\bar{\psi}_{i}(p_ik)u^k.$$

Similarly, us introduce $F_{i,\,kp_i\delta}=\Phi(E_{i,\,kp_i\delta})$ for $k>0$.
Then we have the following lemma

\begin{lemm}\, For $i,\,j\in I, k,\,l\geq 0$, we have
$$[\,E_{i,\,kp_i\delta},\,E_{j,\, lp_j\delta}\,]=0.$$
\end{lemm}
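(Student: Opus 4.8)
The plan is to reduce the commutativity of the imaginary root vectors $E_{i,\,kp_i\delta}$ and $E_{j,\,lp_j\delta}$ for distinct $i\neq j\in I$ to the commutativity of the $\bar\psi$-type generators, since the $E_{i,\,kp_i\delta}$ are obtained from the $\bar\psi_i(p_ik)$ by an invertible change of variables (the exponential-log functional equation), so the two families generate the same subalgebra degree by degree. Concretely, I would first observe that it suffices to prove $[\bar\psi_i(p_ik),\,\bar\psi_j(p_jl)]=0$ for all $k,l>0$ and $i\neq j$ (the case $k=0$ or $l=0$ being trivial since $E_{i,0}$ is a scalar). Expanding the defining functional equation, $E_{i,\,kp_i\delta}$ lies in the subalgebra generated by $\{\bar\psi_i(p_im):1\le m\le k\}$ and conversely, so a commuting family of $\bar\psi$'s immediately yields a commuting family of $E$'s.

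Next I would fix the distinct indices $i\neq j$ and exploit the structural lemmas already established. By definition $\bar\psi_i(p_ik)$ is built from $E_i$, $E_{kp_i\delta-\alpha_i}=T_{\omega_i}^{k}T_i^{-1}(E_i)$, and the $q_i$-bracket between them; all of these lie in the vertex subalgebra $U_q^{(i)}$, which by the earlier remarks is pointwise fixed by $T_{\omega_j}$ for $j\neq i$. This is the crucial leverage: since $E_{j,\,lp_j\delta}$ (equivalently $\bar\psi_j(p_jl)$) is built from $E_j$ and $T_{\omega_j}^{l}T_j^{-1}(E_j)\in U_q^{(j)}$, I want to show that conjugation by an appropriate power of $T_{\omega_j}$ moves one factor into a position where weight considerations force the commutator to vanish. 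The idea, following Beck--Nakajima and Damiani, is to use that $T_{\omega_j}$ fixes $U_q^{(i)}$ pointwise and shifts the $\delta$-grading on $U_q^{(j)}$, so applying $T_{\omega_j}^N$ for large $N$ to the identity $[\bar\psi_i(p_ik),\,\bar\psi_j(p_jl)]=X$ sends $X$ to an element of a different homogeneous component while fixing $\bar\psi_i(p_ik)$; comparing $Q^+$-weights (the element $X$ has weight $kp_i\delta+lp_j\delta$ which is fixed) together with the boundedness of $X$ in a suitable filtration forces $X=0$. More carefully, I would instead argue via the triangular decomposition and the twisted derivations $r_m$ of Lemma~\ref{L:comm-br}: since both root vectors lie in $U_q^+$, their commutator lies in $U_q^+$ of weight $kp_i\delta+lp_j\delta$, and I would compute $r_m([\bar\psi_i(p_ik),\,\bar\psi_j(p_jl)])$ for all $m\in\hat I$, showing it is zero by using the commutation of the braid operators $T_{\omega_i},T_{\omega_j}$ with the relevant $r_m$ (via part 2 of Lemma~\ref{L:comm-br}, rephrased in terms of $T^{-1}$ preserving $U_q^+$) and the fact that the two vertex subalgebras interact trivially at the level of the generators $E_i,E_j$ when $a_{ij}=0$, handling the $a_{ij}\neq 0$ case through Lemma 3.? part (3), $T_{\omega_i}T_i^{-1}(E_j)=T_{\omega_j}^{-p_ia_{ij}/p_j}T_i(E_j)$, to re-express cross terms.

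The main obstacle I anticipate is the case where $i$ and $j$ are \emph{adjacent} in the Dynkin diagram of $\frak g_{_0}$, i.e. $a_{ij}\neq 0$: then $U_q^{(i)}$ and $U_q^{(j)}$ do not commute outright, and the vanishing of $[\bar\psi_i(p_ik),\,\bar\psi_j(p_jl)]$ is a genuine computation rather than a formal consequence of disjoint supports. Here I would localize to the rank-two situation — the subalgebra generated by $U_q^{(i)}$ and $U_q^{(j)}$ — and run the $q$-bracket machinery of \cite{J1,J2,ZJ}: using the isomorphisms $\varphi_i,\varphi_j$ to $U_q(A_1^{(1)})$ and the known commutation relations among imaginary Drinfeld generators in rank two, together with Lemma 3.? relating $T_{\omega_i}T_i^{-1}(E_j)$ to $T_{\omega_j}$-conjugates of $T_i(E_j)$, to reduce the bracket to an expression manifestly killed by the already-verified lower-degree relations (items (1)--(4) of the preceding lemma). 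A secondary technical point is checking that the exponential change of variables genuinely transfers commutativity in both directions; this is a standard generating-function argument (commuting formal series $\Rightarrow$ commuting coefficients $\Rightarrow$ commuting $\log$ coefficients), so I expect it to be routine once the $\bar\psi$-level statement is in hand.
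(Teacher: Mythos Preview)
The paper does not actually supply a proof of this lemma; it states it and moves on, implicitly relying on the references \cite{B}, \cite{BCP}, \cite{Da2} where such commutativity of imaginary root vectors is established (and indeed the very next proposition is explicitly attributed to \cite{Da2}). So there is no ``paper's own proof'' to compare against beyond the standard Beck--Damiani argument.

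Your overall strategy---reduce to $[\bar\psi_i(p_ik),\bar\psi_j(p_jl)]=0$ via the exponential change of variables, then treat the rank-two situation for adjacent $i,j$---is the correct one and matches the literature. However, the specific mechanisms you sketch have gaps. Your $T_{\omega_j}^N$-shifting idea does not work as written: you correctly note that $T_{\omega_j}$ fixes $U_q^{(i)}$ pointwise, hence fixes $\bar\psi_i(p_ik)$, but $T_{\omega_j}$ also fixes $\bar\psi_j(p_jl)$ (this is essentially Proposition~3.5(2), which in the standard development is established \emph{before} or independently of the cross-commutativity). So applying $T_{\omega_j}^N$ to the bracket fixes both factors and yields no new information; there is no ``different homogeneous component'' to compare with. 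The $r_m$-vanishing idea is closer to a workable route, but you would need to actually compute $r_m$ on the bracket and show it vanishes for every $m\in\hat I$, which for adjacent $i,j$ is a nontrivial calculation---not something that follows from the structural lemmas you cite by inspection.

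In the standard proofs the adjacent case is handled either via the Levendorskii--Soibelman convexity relations for the PBW basis (the imaginary root vectors sit ``in the middle'' of the total order on $\Delta^+$ and their $q$-commutators are controlled), or by a direct inductive computation of $[\bar\psi_i(p_ik),E_j]$ and $[\bar\psi_i(p_ik),E_{lp_j\delta-\alpha_j}]$ of the type that appears \emph{later} in this paper as Lemmas~3.10--3.13. Your proposal gestures at the latter route (``run the $q$-bracket machinery'') but does not carry it out; to make the argument complete you would need those explicit commutation formulas as input, and then close the induction.
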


Define a map $o: I\rightarrow \{\pm 1\}$ such that $o(i)o(j)=-1$ for $a_{ij}<0$.
For $k>0$ and $i\in I$, let

$$a_i(p_ik)=o(i)^k\gamma^{-\frac{k}{2}}E_{i,\,kp_i\delta},$$
$${\psi}_{i}(p_ik)=o(i)^k (q_i-q_i^{-1})\gamma^{-\frac{k}{2}}K_i\bar{\psi}_{i}(p_ik).$$
\smallskip

Similarly, we can define $a_i(-p_ik)$ and ${\varphi}_i(-p_ik)$ for $k>0$.

\medskip

The following was given in \cite{Da2}, which is an application of Beck's work \cite{B}.
\begin{prop}\, For $i,\,j\in I$, for $k>0,\,l>0$, the following relations hold:
\begin{eqnarray*}
&(1)&\, [\,{\psi}_{i}(p_ik),\,{\psi}_{i}(p_il) \,]=0=[\,a_{i}(p_ik),\,a_{i}(p_il) \,],\\
&(2)&\, T_{\omega_j}\big({\psi}_{i}(p_ik)\big)={\psi}_{i}(p_ik),\quad T_{\omega_j}\big(a_{i}(p_ik)\big)=a_{i}(p_ik),\\
&(3)&\, [\,a_{i}(p_ik),\,a_{i}(-p_il) \,]=\delta_{k,l}\frac{[2r]_i}{k}\frac{\gamma^{k}-\gamma^{-k}}{q_i-q_i^{-1}},\\
&(4)&\, [\,a_{i}(p_ik),\,T_{\omega_i}^{-l}(E_i)\,]=\frac{[2r]_i}{k}T_{\omega_i}^{-(k+l)}(E_i).
\end{eqnarray*}
\end{prop}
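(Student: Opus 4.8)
The plan is to derive all four relations from the rank-one case by transporting them through the isomorphism $\varphi_i$. Recall that for each $i\in I$ there is an isomorphism $\varphi_i\colon U_q(A_1^{(1)})\to U_q^{(i)}$ (and, for the short vertex $i=n$ of type $A_{2n}^{(2)}$, an analogous $\varphi_n\colon U_q(A_2^{(2)})\to U_q^{(n)}$), and that $U_q^{(i)}$ is stable under $T_i$, $T_{\omega_i}$ and $\Phi$ while being pointwise fixed by $T_{\omega_j}$ for $j\neq i$. The first step is to record that $\varphi_i$ intertwines the operators in play:
\[
\varphi_i\circ T_1=T_i\circ\varphi_i,\qquad \varphi_i\circ T_{\omega_1}=T_{\omega_i}\circ\varphi_i,\qquad \varphi_i\circ\Phi=\Phi\circ\varphi_i .
\]
Each identity is checked on the six Chevalley-type generators of $U_q(A_1^{(1)})$, whose $\varphi_i$-images are $E_i,F_i,K_i^{\pm1}$ and $T_{\omega_i}T_i^{-1}(E_i),T_{\omega_i}T_i^{-1}(F_i),T_{\omega_i}T_i^{-1}(K_i^{\pm1})$, using the explicit formula for $\varphi_i$ together with the two preceding Lemmas (the decomposition of $T_i^{-1}T_{\omega_i}T_i^{-1}$ and the formula for $T_{\omega_i}T_i^{-1}(E_j)$); since both sides are (anti)automorphisms of $U_q^{(i)}$, agreement on generators suffices.

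Granting the intertwining, the root vectors $E_{kp_i\delta\mp\alpha_i}=T_{\omega_i}^{\pm k}T_i^{\mp1}(E_i)$, the elements $\bar\psi_i(p_ik)$, and --- because $\varphi_i$ carries $q$ to $q_i$ and hence respects the exponential generating function --- the elements $E_{i,kp_i\delta}$, $a_i(\pm p_ik)$ and $\psi_i(\pm p_ik)$ are precisely the $\varphi_i$-images of the corresponding objects built from $E_1,F_1,T_1,T_{\omega_1}$ in $U_q(A_1^{(1)})$ (resp. $U_q(A_2^{(2)})$). Consequently relations (1), (3) and (4) are the $\varphi_i$-transports of the Drinfeld defining relations among the imaginary root vectors, and between the imaginary and the real root vectors $T_{\omega_1}^{-l}(E_1)$, of $U_q(\widehat{sl}_2)$, which were proved by Damiani \cite{Da1} (see also \cite{LSS}) and, via braid groups, by Beck \cite{B}; for the $A_2^{(2)}$-vertex one invokes instead the twisted rank-one relations of \cite{ZJ,JZ,Da2}. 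Translating the rank-one statement into the form recorded here amounts to matching normalizations --- the rescaling of the imaginary index $k\leftrightarrow p_ik$, the substitution $q\mapsto q_i$, the sign $o(i)$, and the twist by $\gamma^{\mp k/2}$ built into $a_i(\pm p_ik)$ --- and it is this matching that replaces the untwisted coefficient by $[2r]_i$ in (3) and (4), and that produces the commutativity statements in (1).

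For (2) with $j\neq i$ there is nothing to do: $\psi_i(p_ik)$ and $a_i(p_ik)$ lie in $U_q^{(i)}$, which $T_{\omega_j}$ fixes pointwise. For $j=i$, the identities $T_{\omega_i}(a_i(p_ik))=a_i(p_ik)$ and $T_{\omega_i}(\psi_i(p_ik))=\psi_i(p_ik)$ transport under $\varphi_i$ to the statement that the translation $T_{\omega_1}$ fixes the Drinfeld imaginary root vectors of $U_q(\widehat{sl}_2)$, reflecting the $\tilde W$-stability of $k\delta$, which is again part of the rank-one package \cite{Da1,B} (and its twisted analogue for $U_q(A_2^{(2)})$).

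The step I expect to be the real obstacle is the normalization matching of the second paragraph: verifying that the braid-group definition of $a_i(\pm p_ik)$ inside the twisted algebra coincides, after the index rescaling and the $o(i)$- and $\gamma$-adjustments, with the rank-one Drinfeld generator, so that the coefficient emerges as $[2r]_i$ rather than $[2k]_{q_i}$. This is the ``identification done with care'' point emphasized in \cite{Da3}, and it is where the twisted root system --- the three lengths of real roots, the factor $p_i$, and the offset by $r$ in type $A_{2n}^{(2)}$ --- enters essentially; the $A_{2n}^{(2)}$ short-vertex case is the one piece genuinely outside Beck's untwisted setting and rests on the twisted rank-one computations of \cite{ZJ,JZ,Da2}.
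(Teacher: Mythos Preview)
Your approach is correct and is precisely the mechanism underlying the paper's treatment: the paper does not supply an independent argument for this proposition but simply records that it ``was given in \cite{Da2}, which is an application of Beck's work (\cite{B}).'' Your sketch makes explicit the reduction-to-rank-one via the vertex subalgebra isomorphism $\varphi_i$ (and $\varphi_n$ for the short vertex in type $A_{2n}^{(2)}$) that those references carry out, including the intertwining of $\varphi_i$ with $T_i$, $T_{\omega_i}$, $\Phi$ and the pointwise fixing of $U_q^{(i)}$ by $T_{\omega_j}$ for $j\neq i$; this is exactly Beck's and Damiani's line of argument, so there is nothing to add.
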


\subsection{Relations between $U_q^{(i)}$ and $U_q^{(j)}$ }\,
We recall the commutation relations among the root vectors and remark that
the argument also works in the case of $A_{2n}^{(2)}$.

\begin{lemm}\, \cite{L2, B}
One has that for $i,\, j\in I$,
 \begin{eqnarray*}
&(1)& \quad T_{\omega_i}T_{\omega_j}=T_{\omega_j}T_{\omega_i},\\
&(2)& \quad T_{\omega_j}(E_i)=E_i,\quad \hbox{and} \quad T_{\omega_i}T_i=T_i T_{\omega_j} \,\, \hbox{for} \,\, i\neq j.
\end{eqnarray*}

\end{lemm}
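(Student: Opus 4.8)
The plan is to establish both identities by reducing them to known facts about the extended braid group action on the Drinfeld--Jimbo generators, exploiting that the $\omega_i$ commute in $\tilde W$ and that multiplication of lengths is additive for the relevant products. For part~(1), since $\tilde W = W_0 \ltimes P$ is abelian on the translation part $P$, we have $\omega_i \omega_j = \omega_j \omega_i$ in $\tilde W$; writing $\omega_i = s_{i,1}\cdots s_{i,a}\tau_i$ and $\omega_j = s_{j,1}\cdots s_{j,b}\tau_j$ as the chosen reduced expressions (with $\tau_i,\tau_j\in\mathcal T$), one checks that $\ell(\omega_i\omega_j) = \ell(\omega_i) + \ell(\omega_j)$, using that each $\omega_k$ has the property $\ell(\omega_k w) = \ell(\omega_k) + \ell(w)$ for $w\in W_0$ (the $\omega_k$ are ``dominant'' translations, cf.\ Lemma~3.8(1) and its proof, and the length formula $\ell(tw) = \sum_{\alpha>0, w^{-1}t^{-1}\alpha<0}$). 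Given the length additivity, the braid relations in Lusztig's group yield $T_{\omega_i\omega_j} = T_{\omega_i}T_{\omega_j}$ and likewise $T_{\omega_j\omega_i} = T_{\omega_j}T_{\omega_i}$; since $\omega_i\omega_j = \omega_j\omega_i$, the two operators coincide, giving $T_{\omega_i}T_{\omega_j} = T_{\omega_j}T_{\omega_i}$.

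For part~(2), the equality $T_{\omega_j}(E_i) = E_i$ for $i\neq j$ follows from the observation (already recorded in Section~3) that $U_q^{(i)}$ is pointwise fixed by $T_{\omega_j}$ for $j\neq i$, and $E_i\in U_q^{(i)}$; alternatively one argues directly that $\omega_j$ can be written as a reduced word not involving $s_i$ in a way compatible with $\alpha_i$ being fixed, more precisely $\langle \alpha_i,\omega_j\rangle$-type pairings vanish so the translation $\omega_j$ fixes the root $\alpha_i$ and one can choose a reduced expression $s_{j,1}\cdots s_{j,b}\tau_j$ with $\tau_j(i) = i$ and each partial product keeping $\alpha_i$ positive, whence $T_{\omega_j}(E_i) = E_i$ by the standard criterion (if $w(\alpha_i) = \alpha_i$ with appropriate reducedness then $T_w(E_i) = E_i$). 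For the braid-type relation $T_{\omega_i}T_i = T_i T_{\omega_j}$ when $i\neq j$: here $\omega_j = s_i \omega_i s_i^{-1}$ should hold in $\tilde W$ (this is the translation-conjugation identity $s_i\omega_i = \omega_j s_i$ reflecting $s_i(\omega_i) = \omega_j$ on weights, using $a_{ij}\neq 0$ forces the reflection to permute the relevant fundamental weights appropriately — more carefully, one uses $\langle\alpha_i,\omega_i^\vee\rangle$ and the action $s_i$ on $P$), so $\omega_i s_i = s_i \omega_j$; combined with the length count $\ell(\omega_i s_i) = \ell(\omega_i)+1$ and $\ell(s_i\omega_j) = \ell(\omega_j)+1 = \ell(\omega_i)+1$ (as $\ell(\omega_i) = \ell(\omega_j)$ since they are $W_0$-conjugate translations, or by the explicit reduced words), the braid relations give $T_{\omega_i}T_i = T_{\omega_i s_i} = T_{s_i\omega_j} = T_i T_{\omega_j}$.

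The main obstacle I expect is the bookkeeping in part~(2): verifying the precise identity $s_i\omega_i = \omega_j s_i$ in $\tilde W$ and the matching of lengths. One has to be careful that $\omega_i = p_i\omega_i^\vee$ and the $p_i$ differ between $i$ and $j$ when exactly one of $\sigma(i),\sigma(j)$ is a fixed point, and that the reduced expressions chosen at the start of Section~3 for $\omega_i\tau_i^{-1}$ interact correctly with $s_i$. The cleanest route is probably to avoid coordinates and argue inside $\tilde W$ using the semidirect product structure: $\omega_i$ acts on $W_0$ by conjugation through $\tau_i$, and $s_i\omega_i^\vee s_i^{-1} = s_i(\omega_i^\vee)$ as a coweight, which one identifies with $\omega_j^\vee$ up to the $W_0$-orbit relation for adjacent nodes, adjusting by the ratio $p_i/p_j$. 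Once the $\tilde W$-level identities and the additivity of length are in hand, the passage to the $T$-operators is a formal consequence of the defining braid relations and the extension rule $T_{\tau w} = T_\tau T_w$ for $\tau\in\mathcal T$; the $E_i$-specific claim then follows either from the pointwise-fixed remark or from the general fact that $T_w$ fixes $E_i$ whenever $w$ can be reduced keeping $\alpha_i$ simple.
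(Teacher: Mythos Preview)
The paper does not actually give a proof of this lemma; it is merely quoted from the cited references. So there is no ``paper's own proof'' to compare against, and the relevant question is whether your argument is sound.

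Your approach to part~(1) is fine: the $\omega_i$ commute as translations in $\tilde W$, lengths add because dominant translations satisfy $\ell(\omega_i\omega_j)=\ell(\omega_i)+\ell(\omega_j)$, and then $T_{\omega_i}T_{\omega_j}=T_{\omega_i\omega_j}=T_{\omega_j\omega_i}=T_{\omega_j}T_{\omega_i}$ follows from the braid relations. Likewise, the first claim in part~(2), $T_{\omega_j}(E_i)=E_i$ for $i\neq j$, is correctly justified either by the pointwise-fixed remark on $U_q^{(i)}$ or by the standard fact that $T_w(E_i)=E_i$ whenever $w(\alpha_i)=\alpha_i$ and $\ell(ws_i)=\ell(w)+1$.

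The second claim in part~(2), however, has a genuine problem --- but the problem is in the printed statement, not in your method. The identity $T_{\omega_i}T_i=T_iT_{\omega_j}$ for \emph{arbitrary} $j\neq i$ is false in general: it would force $T_i^{-1}T_{\omega_i}T_i=T_{\omega_j}$ for every $j\neq i$, which is absurd as soon as $|I|\geq 3$. The intended statement (consistent with the cited references and with how the lemma is used later in the paper) is $T_{\omega_j}T_i=T_iT_{\omega_j}$ for $i\neq j$. Your attempted justification, namely that $s_i(\omega_i)=\omega_j$, is simply wrong: already in type $A_2$ one has $s_1(\omega_1)=\omega_1-\alpha_1=-\omega_1+\omega_2\neq\omega_2$. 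You were right to flag this step as the ``main obstacle'', but the obstacle is not bookkeeping --- the identity you need does not hold.

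For the corrected statement $T_{\omega_j}T_i=T_iT_{\omega_j}$, your general strategy works cleanly: since $\langle\alpha_i,\omega_j\rangle=0$ for $i\neq j$, the translation $\omega_j$ satisfies $s_i\omega_j=\omega_js_i$ in $\tilde W$, and $\ell(s_i\omega_j)=\ell(\omega_j)+1=\ell(\omega_js_i)$ (the translation $\omega_j$ keeps $\alpha_i$ positive), so $T_iT_{\omega_j}=T_{s_i\omega_j}=T_{\omega_js_i}=T_{\omega_j}T_i$.
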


We also let $\hat{T}_{\omega_i}=o(i){T}_{\omega_i}$. The following is immediate.
%where $o: I\rightarrow \{\pm 1\}$ is such that $a_{ij}<0$, then $o(i)o(j)=-1$.
%Here we use $\hat{T}_{\omega_i}$ to replace ${T}_{\omega_i}$.

\begin{lemm}\, If $a_{ij}=0$ for $i, j\in I$,  then $[\,U_q^{(i)},\,U_q^{(j)}\,]=0.$
\end{lemm}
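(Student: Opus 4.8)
The claim is that if $a_{ij}=0$ then the subalgebras $U_q^{(i)}$ and $U_q^{(j)}$ commute elementwise. The natural approach is to reduce everything to the generators of $U_q^{(i)}$ and $U_q^{(j)}$ listed in the definition, namely $E_i, F_i, K_i^{\pm 1}, T_{\omega_i}T_i^{-1}(E_i), T_{\omega_i}T_i^{-1}(F_i), T_{\omega_i}T_i^{-1}(K_i^{\pm 1}), \gamma^{\pm\frac12}$ on the one hand, and the corresponding list with $j$ in place of $i$ on the other. Since $\gamma^{\pm\frac12}$ is central, and the $K$'s satisfy $(R2)$ so they all commute among themselves, the real content is to verify that every "$E$-type" or "$F$-type" generator of $U_q^{(i)}$ commutes with every "$E$-type" or "$F$-type" generator of $U_q^{(j)}$, together with the mixed $K$-vs-$E$/$F$ brackets. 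The first step is therefore to organize the nine essential cross-brackets (after using the $\Phi$-symmetry to halve the $E/F$ versus $F/E$ count).

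\textbf{Key steps.} First I would handle $[E_i, E_j] = 0$: since $a_{ij}=0=a_{ji}$, the quantum Serre relation $(R5)$ for the pair $(i,j)$ reads $E_iE_j - E_jE_i = 0$, so this is immediate, and likewise $[F_i,F_j]=0$ from $(R6)$, while $[E_i,F_j]=0$ follows from $(R4)$ since $i\neq j$. The $K$-vs-$E/F$ brackets $[K_i,E_j]=0$ etc. follow from $(R3)$ because $a_{ij}=0$. Next, for the brackets involving $T_{\omega_j}T_j^{-1}$-images, the key tool is the previous lemma: $T_{\omega_j}(E_i)=E_i$ and $T_{\omega_i}T_i=T_iT_{\omega_j}$ for $i\neq j$ (and its $\Phi$-conjugate and inverse variants), which says $T_{\omega_j}$ fixes all the generators of $U_q^{(i)}$ pointwise — this was recorded earlier as "$U_q^{(i)}$ is pointwise fixed by $T_{\omega_j}$ for $j\neq i$". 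Thus $T_{\omega_j}T_j^{-1}$ acting on a generator $x$ of $U_q^{(i)}$ equals $T_{\omega_j}$ applied to $T_j^{-1}(x)$; I need to compute $T_j^{-1}(E_i)$, $T_j^{-1}(F_i)$, $T_j^{-1}(K_i)$. Since $a_{ji}=0$, the braid action formulas give $T_j^{-1}(E_i)=E_i$, $T_j^{-1}(F_i)=F_i$, $T_j^{-1}(K_i)=K_i$ (the sums over $s=0,\dots,-a_{ji}=0$ collapse to a single term $E_i$, resp. $F_i$; and $s_j(\alpha_i)=\alpha_i$ so $K_i$ is fixed). Hence $T_{\omega_j}T_j^{-1}$ also fixes $E_i, F_i, K_i^{\pm1}$, and by applying the automorphism $T_{\omega_j}T_j^{-1}$ to the identities of the first bullet one gets that the images $T_{\omega_j}T_j^{-1}(E_j)$ etc. (which are generators of $U_q^{(j)}$) commute with $E_i, F_i, K_i^{\pm1}$. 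Finally, for the bracket of $T_{\omega_i}T_i^{-1}(E_i)$ with $T_{\omega_j}T_j^{-1}(E_j)$ and the remaining such pairs, apply the commuting automorphism $\Theta := (T_{\omega_i}T_i^{-1})(T_{\omega_j}T_j^{-1})$ — these two commute because $T_{\omega_i}T_{\omega_j}=T_{\omega_j}T_{\omega_i}$ and $T_i, T_j$ braid-commute when $a_{ij}=0$, and $T_i$ commutes with $T_{\omega_j}$ — to the already-established identity $[E_i,E_j]=0$, giving $[T_{\omega_i}T_i^{-1}(E_i), T_{\omega_j}T_j^{-1}(E_j)]=0$ once one checks $\Theta(E_i)=T_{\omega_i}T_i^{-1}(E_i)$ and $\Theta(E_j)=T_{\omega_j}T_j^{-1}(E_j)$, which again uses that $T_{\omega_j}T_j^{-1}$ fixes $U_q^{(i)}$-generators and symmetrically.

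\textbf{Main obstacle.} The only delicate point is bookkeeping with the braid relations when $a_{ij}=0$: I must be sure that $T_i$ and $T_j$ genuinely commute as operators (not merely up to the braid relation on lengths), that $T_i$ commutes with $T_{\omega_j}$, and that $T_{\omega_j}T_j^{-1}$ restricted to $U_q^{(i)}$ is the identity on all its generators — the last of these is essentially the "pointwise fixed" assertion already in the text combined with the collapse of the $T_j^{-1}$-formulas at $a_{ji}=0$. Once these are in hand, every cross-bracket is obtained by transporting one of the three basic relations $(R3)$–$(R6)$ through an explicit commuting automorphism, so no genuinely new computation is required; the risk is purely in getting all the "for $i \neq j$" and "$a_{ij}=0$" hypotheses lined up in each of the finitely many cases.
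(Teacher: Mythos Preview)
Your proposal is correct; the paper in fact gives no proof at all, merely stating that the lemma ``is immediate'' after recording that $T_{\omega_i}T_{\omega_j}=T_{\omega_j}T_{\omega_i}$ and that $T_{\omega_j}$ fixes $E_i$ for $i\neq j$. Your case-by-case verification via the collapse of $(R3)$--$(R6)$ at $a_{ij}=0$ together with the observation that $T_j^{\pm1}$ and $T_{\omega_j}$ fix the generators of $U_q^{(i)}$ is precisely the argument that unpacks the word ``immediate''.
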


%Hence we are left to check the relations for the case of $a_{ij}<0$.

 \begin{lemm}\, For $i\neq j\in I$, and $k,\,l\in \mathbb{Z}$,
 $$[\,\hat{T}_{\omega_i}^{-k}(E_i),\,\hat{T}_{\omega_j}^{-l}(E_j)\,]=0.$$
\end{lemm}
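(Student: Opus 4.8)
The plan is to reduce the statement to the rank-two situation and then exploit the $q$-derivation characterization of the image of the braid operators (Lemma \ref{L:comm-br}). First I would observe that by Lemma 3.8(2) we have $T_{\omega_i}T_{\omega_j}=T_{\omega_j}T_{\omega_i}$, and that $T_{\omega_j}$ fixes $E_i$ for $i\neq j$, while $U_q^{(i)}$ is pointwise fixed by $T_{\omega_j}$. Hence it suffices to prove the case $k=l=0$, namely $[\,E_i,\,E_j\,]=0$ when $a_{ij}=0$ — which is immediate from the Serre relation $(R5)$ with $1-a_{ij}=1$ — together with a statement of the form: if $[\,x,\,y\,]=0$, $T_{\omega_i}(y)=y$ and $T_{\omega_j}(x)=x$, then $[\,T_{\omega_i}^{-1}(x),\,y\,]=0$ and symmetrically. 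So the real content is that one can ``pull a $T_{\omega_i}^{-1}$ through'' a bracket with something $T_{\omega_i}$-invariant. Note that the case $a_{ij}=0$ is already Lemma 3.10, so here we may and do assume $a_{ij}<0$ (equivalently $a_{ij}\in\{-1,-2,-3\}$, and by Lemma 3.6(3) one can even descend to $a_{ij}=-1$); I would state the argument uniformly.

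The key step is the inductive one. Suppose we have shown $[\,\hat T_{\omega_i}^{-k}(E_i),\,\hat T_{\omega_j}^{-l}(E_j)\,]=0$ for a given pair $(k,l)$ with, say, $k\ge l\ge 0$ (the other cases follow by applying $\Phi$, $\Omega$, and the symmetry $i\leftrightarrow j$, using Remark 2.6 and Lemma 3.8). I would apply $\hat T_{\omega_i}^{-1}$ to both sides. Since $\hat T_{\omega_j}^{-l}(E_j)$ lies in $U_q^{(j)}$ and $j\neq i$, it is fixed by $T_{\omega_i}$, hence by $\hat T_{\omega_i}^{-1}$ up to the harmless sign $o(i)^{-1}$; and $\hat T_{\omega_i}^{-1}$ sends $\hat T_{\omega_i}^{-k}(E_i)$ to $\hat T_{\omega_i}^{-(k+1)}(E_i)$. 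This immediately gives $[\,\hat T_{\omega_i}^{-(k+1)}(E_i),\,\hat T_{\omega_j}^{-l}(E_j)\,]=0$, propagating $k$ upward. To also increase $l$ I would instead use the commuting statement for the $U_q^{(i)}$-imaginary root vectors: one knows from Proposition 3.11(4) that $a_i(p_ik)$ acts as a ``shift operator'' on the tower $\{T_{\omega_i}^{-l}(E_i)\}_{l}$, and from Proposition 3.13(2) that $T_{\omega_j}$ fixes each $a_i(p_ik)$; combining the Jacobi identity with the vanishing bracket for smaller indices then produces the bracket for larger $l$. In other words, $\mathrm{ad}\,a_i(p_ik)$ generates the whole vertical tower from $E_i$, and it commutes with $\mathrm{ad}$ of anything in $U_q^{(j)}$, so $[\,U_q^{(i)}\text{-tower},\,U_q^{(j)}\text{-tower}\,]=0$ reduces to the base case.

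An alternative, perhaps cleaner, route for the inductive step uses Lemma \ref{L:comm-br} directly: writing $y=\hat T_{\omega_j}^{-l}(E_j)\in U_q^+$, the element $z:=[\,\hat T_{\omega_i}^{-(k+1)}(E_i),\,y\,]$ lies in $U_q^+$ (all the root vectors involved lie in $U_q^+$ by the remark following the definition of $E_{\beta_k}$), so by Lemma \ref{L:comm-br}(1) it is enough to check $r_m(z)=0$ for every $m\in\hat I$. Using the twisted Leibniz rule for $r_m$ together with the braid-covariance of the $r_m$ and the inductive hypothesis for index $k$, the computation of $r_m(z)$ collapses to brackets already known to vanish. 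The main obstacle I anticipate is precisely bookkeeping this last reduction: one must track the weight factors $q^{(|y|,\alpha_m)}$ correctly and verify that the ``new'' terms produced by $r_m$ either vanish for degree reasons or reproduce an instance of the inductive hypothesis (possibly after applying $\Phi$/$\Omega$ to swap raising and lowering towers). Everything else — the $k=l=0$ base case, the $T_{\omega}$-invariances, and the sign bookkeeping coming from $o(i)$ — is routine given Lemmas 3.6–3.10 and Proposition 3.11.
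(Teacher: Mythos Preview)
Your own reduction argument actually shows that the lemma, as printed, is false whenever $a_{ij}<0$. Since $T_{\omega_j}(E_i)=E_i$ for $i\neq j$ and the $T_{\omega_i}$ commute, one has
\[
[\,\hat T_{\omega_i}^{-k}(E_i),\,\hat T_{\omega_j}^{-l}(E_j)\,]
=o(i)^{k}o(j)^{l}\,T_{\omega_i}^{-k}T_{\omega_j}^{-l}\bigl([E_i,E_j]\bigr),
\]
so the bracket vanishes for one pair $(k,l)$ if and only if it vanishes for all, if and only if $a_{ij}=0$. You correctly saw that the $(0,0)$ case requires $a_{ij}=0$, then passed to $a_{ij}<0$ and launched an induction --- but there is no base case to start from, and neither the $T_{\omega}$-shift step nor the $r_m$-derivation alternative can manufacture one (for instance $r_i([E_i,E_j])=(q^{(\alpha_i,\alpha_j)}-1)E_j\neq0$). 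The gap is in the target statement, not in your method.

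The paper gives no proof here --- the assertion is quoted from \cite{Da2} --- and almost certainly contains a misprint: the role of this lemma, together with the neighbouring lemmas on $[\psi_i(k),F_j]$ and Drinfeld's relation (7), points to the intended second entry being $\hat T_{\omega_j}^{-l}(F_j)$ rather than $\hat T_{\omega_j}^{-l}(E_j)$. For that corrected statement your reduction works perfectly and no induction is needed: since $T_{\omega_i}$ commutes with $\Phi$ one has $T_{\omega_i}(F_j)=F_j$, and the displayed identity becomes $T_{\omega_i}^{-k}T_{\omega_j}^{-l}([E_i,F_j])=0$ by $(R4)$.
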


If $i\neq j\in I$ such that $a_{ij}a_{ji}=1$, it is easy to see that
$\sigma(i)\neq i$ and $\sigma(j)\neq j$ and $d_i=p_i=1, d_j=p_j=1$.
Thus one has the following lemma.

\begin{lemm}\label{lemma2}\, If $i\neq j\in I$ such that $a_{ij}a_{ji}=1$, then for $k>1,\,l\in \mathbb{Z}$ one has following relations.
 \begin{eqnarray*}
&(1)&\, [\,{\psi}_{i}(1),\,E_j \,]=-\gamma^{-\frac{1}{2}}\hat{T}_{\omega_j}^{-1}(E_j),\\
&(2)&\, [\,{\psi}_{i}(k),\,\hat{T}_{\omega_j}^{-l}(E_j) \,]\\
&&\hspace{1.5cm}=\gamma^{-\frac{1}{2}} \Big(q \hat{T}_{\omega_j}^{-(l+1)}(E_j){\psi}_{i}(k-1)-q^{-1}{\psi}_{i}(k-1)\hat{T}_{\omega_j}^{-(l+1)}(E_j)\Big),\\
&(3)&\, [\,{\psi}_{i}(1),\,F_j \,]=\gamma^{-\frac{1}{2}}\hat{T}_{\omega_j}(F_j),\\
&(4)&\, [\,{\psi}_{i}(k),\,\hat{T}_{\omega_j}^{l}(F_j) \,]=\gamma^{\frac{1}{2}}\Big(q^{-1} \hat{T}_{\omega_j}^{(l+1)}(F_j){\psi}_{i}(k-1)-q{\psi}_{i}(k-1)\hat{T}_{\omega_j}^{(l+1)}(F_j)\Big).
\end{eqnarray*}
\end{lemm}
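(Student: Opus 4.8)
The plan is to reduce everything to the rank-one case handled by the isomorphism $\varphi_i$ of the preceding proposition, together with the structure results on $U_q^{(i)}$ and the exchange relations of the $T_{\omega_j}$'s from the earlier lemmas. First I would observe that the hypothesis $a_{ij}a_{ji}=1$ forces $\sigma(i)\neq i$, $\sigma(j)\neq j$, hence $d_i=d_j=p_i=p_j=1$; thus $\omega_i=\omega_i^{\lor}$, $q_i=q_j=q$, and the vertex subalgebras $U_q^{(i)}$, $U_q^{(j)}$ are each isomorphic to $U_q(A_1^{(1)})$ via $\varphi_i$, $\varphi_j$. The elements $\psi_i(k)$ live inside $U_q^{(i)}$ (they are built from $E_i$, $E_{k\delta-\alpha_i}=T_{\omega_i}^{k}T_i^{-1}(E_i)$ via $\bar\psi_i$ and the generating-function exponential), while $\hat T_{\omega_j}^{-l}(E_j)$ lives inside $U_q^{(j)}$. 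So the computation takes place in the subalgebra generated by $U_q^{(i)}$ and $U_q^{(j)}$, on which we have precise control.

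The main point for (1) is the commutator $[\psi_i(1),E_j]$. Here $\psi_i(1)$ is, up to the explicit scalar $o(i)(q_i-q_i^{-1})\gamma^{-1/2}K_i$, the element $\bar\psi_i(1)=E_{\delta-\alpha_i}E_i-q_i^{-2}E_iE_{\delta-\alpha_i}$; inside $U_q(A_1^{(1)})$ this is proportional to the first Drinfeld imaginary generator. I would compute $[\bar\psi_i(1),E_j]$ by expanding $E_{\delta-\alpha_i}=T_{\omega_i}T_i^{-1}(E_i)$ and using Lemma on $U_q^{(i)}$-$U_q^{(j)}$ relations: $T_{\omega_j}(E_i)=E_i$ and $T_{\omega_i}T_i=T_iT_{\omega_j}$ for $i\neq j$, so that applying $T_{\omega_i}$ and $T_i^{-1}$ to the rank-one relation $[\,T_{\omega_j}T_j^{-1}(E_j),\,E_i\,]$-type identities transports the known $U_q(\widehat{sl}_2)$ commutator $[h_1,x_0^+]$-analog into the mixed setting. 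Concretely I expect the quantum Serre relation $R5$ for the pair $(i,j)$ (with $a_{ij}=a_{ji}=-1$), together with $T_i$-invariance of $U_q^{(j)}$ up to the $\hat T_{\omega_j}$-twist recorded in Lemma~3.8(3) (the identity $T_{\omega_i}T_i^{-1}(E_j)=T_{\omega_j}^{-p_ia_{ij}/p_j}T_i(E_j)$, which here reads $T_{\omega_i}T_i^{-1}(E_j)=T_{\omega_j}(T_i(E_j))$), to produce the single term $-\gamma^{-1/2}\hat T_{\omega_j}^{-1}(E_j)$ after collecting the $K_i$-conjugation factors and the $o(i),o(j)$ signs into the hat normalization.

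For (2) I would argue by induction on $k$, using the recursion $a_i(p_ik)$ versus $E_{i,kp_i\delta}$ and the relation Proposition~3.12(4), $[a_i(k),T_{\omega_i}^{-l}(E_i)]=\frac{[2r]_i}{k}T_{\omega_i}^{-(k+l)}(E_i)$, combined with the generating-function definition linking $\psi_i(k)$ to the $E_{i,k\delta}$; the $q$-bracket shape on the right of (2) is exactly what the exponential generating function produces when one knows the $k=1$ commutator and the $a_i$-commutators. Parts (3) and (4) follow from (1) and (2) by applying the antiautomorphism $\Phi$ (which fixes $U_q^{(i)}$, sends $E_j\mapsto F_j$, intertwines $T_i$ with itself and hence $T_{\omega_j}$ with $T_{\omega_j}$ up to the inverse, and inverts $q$), together with the compatibility $\hat T_{\omega_j}=o(j)T_{\omega_j}$; the sign bookkeeping via $o$ is what makes the $q\leftrightarrow q^{-1}$ swap between (2) and (4) consistent. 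The hardest step is the base case (1): extracting the \emph{single}-term right-hand side requires carefully tracking how the Serre relation for $(i,j)$ collapses the a priori two-term $q$-commutator, and making sure the $\gamma^{1/2}$, $K_i$ and $o(i)o(j)=-1$ contributions combine to exactly the stated coefficient — everything downstream is then a formal generating-function and $\Phi$-symmetry argument.
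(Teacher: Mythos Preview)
The paper does not actually supply a proof of this lemma: it is presented as a recollection of commutation relations from Damiani~[Da2] (see the sentence opening \S3.5), preceded only by the observation that $a_{ij}a_{ji}=1$ forces $\sigma(i)\neq i$, $\sigma(j)\neq j$ and $d_i=p_i=d_j=p_j=1$, which you also make. Your outline for (1) and (2) is the standard Beck--Damiani route and is sound: unpack $\bar\psi_i(1)=E_{\delta-\alpha_i}E_i-q_i^{-2}E_iE_{\delta-\alpha_i}$ via $E_{\delta-\alpha_i}=T_{\omega_i}T_i^{-1}(E_i)$, use the identity $T_{\omega_i}T_i^{-1}(E_j)=T_{\omega_j}\,T_i(E_j)$ from Lemma~3.1(3), and let the rank-two Serre relation collapse the result to a single term; (2) then follows by the usual recursion linking $\psi_i(k)$ to $\psi_i(k-1)$.

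There is, however, a genuine gap in your derivation of (3) and (4). You say these follow from (1) and (2) by applying $\Phi$, on the grounds that $\Phi$ ``fixes $U_q^{(i)}$''. But $\Phi$ only \emph{stabilises} $U_q^{(i)}$ as a subalgebra; it does not fix $\psi_i(k)$. Since $\Phi$ exchanges $E$'s and $F$'s, it carries $\bar\psi_i(k)\in U_q^+$ to $\bar\varphi_i(k)\in U_q^-$, so $\Phi(\psi_i(k))$ is a $\varphi_i$-type element, not $\psi_i(k)$ again. Applying $\Phi$ to (1)--(2) therefore yields identities for $\varphi_i(-k)$ acting on $E_j$, not the desired identities for $\psi_i(k)$ acting on $F_j$. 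Relations (3)--(4) must instead be proved by a computation parallel to (1)--(2): commute $\bar\psi_i(k)$ against $F_j$ directly, using $[E_i,F_j]=0$ and evaluating $[E_{\delta-\alpha_i},F_j]$ via the braid-group description $E_{\delta-\alpha_i}=T_{\omega_i}T_i^{-1}(E_i)$ together with the same Lemma~3.1(3) identity. This is not harder than (1), but it is not a formal consequence of $\Phi$.
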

\vspace{6pt}

If $i\neq j\in I$ such that $a_{ij}=-r$ and $a_{ji}=-1$, it is easy
to see that $\sigma(i)\neq i$ and $\sigma(j)= j$ and $p_i=d_i=1,
d_j=r$. Then the next result follows.

\begin{lemm}\label{lemma3}\, Suppose $a_{ij}=-r$ and $a_{ji}=-1$ for $i\neq j\in I$. Then
for $k>r,\,l\in \mathbb{Z}$, we have
 \begin{eqnarray*}
&(1)&\, [\,{\psi}_{i}(1),\,\hat{T}_{\omega_j}^{-l}(E_j) \,]=-\gamma^{-\frac{1}{2}}[a_{ij}]_i \hat{T}_{\omega_j}^{-(l+1)}(E_j),\\
&(2)&\, [\,{\psi}_{i}(p_i k),\,\hat{T}_{\omega_j}^{-l}(E_j) \,]\\
&&\hspace{0.5cm}=\gamma^{-\frac{1}{2}} \Big(q_i^{-a_{ij}}\hat{T}_{\omega_j}^{-(l+1)}(E_j)  {\psi}_{i}(p_ik-r)-q_i^{a_{ij}}{\psi}_{i}(p_ik-r)\hat{T}_{\omega_j}^{-(l+1)}(E_j)\Big),\\
&(3)&\, [\,{\psi}_{i}(1),\,\hat{T}_{\omega_j}^{l}(F_j) \,]=\gamma^{-\frac{1}{2}}[a_{ij}]_i \hat{T}_{\omega_j}^{l+1}(F_j),\\
&(4)&\, [\,{\psi}_{i}(p_ik),\,\hat{T}_{\omega_j}^{l}(F_j) \,]\\
&&\hspace{0.5cm}=\gamma^{\frac{1}{2}}\Big(q_i^{a_{ij}} \hat{T}_{\omega_j}^{(l+1)}(F_j){\psi}_{i}(p_ik-r)
-q_{i}^{-a_{ij}}{\psi}_{i}(p_ik-r)\hat{T}_{\omega_j}^{(l+1)}(F_j)\Big).
\end{eqnarray*}

\end{lemm}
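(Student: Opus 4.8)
The plan is to derive the four identities as consequences of the corresponding relations already established in Lemma \ref{lemma2} and in the Drinfeld-generator machinery, using the isomorphisms $\varphi_i$ and the commutation structure between $U_q^{(i)}$ and $U_q^{(j)}$. The starting point is the observation recorded just before the statement: when $a_{ij}=-r$ and $a_{ji}=-1$ we have $\sigma(i)\neq i$, $\sigma(j)=j$, $p_i=d_i=1$ and $d_j=p_j=r$, so $q_i=q$ and $q_j=q^r$. First I would reduce everything to a computation inside the rank-two subalgebra generated by $U_q^{(i)}$ and $U_q^{(j)}$; by Lemma 3.? (the one asserting $T_{\omega_i}T_{\omega_j}=T_{\omega_j}T_{\omega_i}$, $T_{\omega_j}(E_i)=E_i$, $T_{\omega_i}T_i=T_iT_{\omega_j}$) and by part (2) of the preceding proposition ($T_{\omega_j}$ fixes $\psi_i(p_ik)$ and $a_i(p_ik)$), it suffices to prove (1) and (3), since (2) and (4) then follow by applying $\hat T_{\omega_j}^{-l}$ (resp. $\hat T_{\omega_j}^{l}$) and iterating the one-step bracket while tracking the powers of $q_i$ and $\gamma$. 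Applying $\Phi$ (which commutes with $T_i$, hence with $T_{\omega_i}$, and sends $E_\bullet\mapsto F_\bullet$, $q\mapsto q^{-1}$) converts (1) into (3) and (2) into (4), so in fact only (1) and (2) need genuine proof, and (2) is the inductive consequence of (1).

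So the crux is (1): $[\psi_i(1),\hat T_{\omega_j}^{-l}(E_j)] = -\gamma^{-1/2}[a_{ij}]_i\,\hat T_{\omega_j}^{-(l+1)}(E_j)$. I would prove the case $l=0$ first, i.e. compute $[\psi_i(1),E_j]$. Unwinding the definition, $\psi_i(1)=o(i)(q_i-q_i^{-1})\gamma^{-1/2}K_i\bar\psi_i(1)$ with $\bar\psi_i(1)=E_{p_i\delta-\alpha_i}E_i-q_i^{-2}E_iE_{p_i\delta-\alpha_i}$ and $E_{p_i\delta-\alpha_i}=T_{\omega_i}T_i^{-1}(E_i)$. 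Thus the bracket reduces to commuting $T_{\omega_i}T_i^{-1}(E_i)$ and $E_i$ past $E_j$ inside $U_q^+$. Here I would invoke Lemma 3.?(3), $T_{\omega_i}T_i^{-1}(E_j)=T_{\omega_j}^{-p_ia_{ij}/p_j}T_i(E_j)$, which with $p_i=1$, $p_j=r$, $a_{ij}=-r$ gives $T_{\omega_i}T_i^{-1}(E_j)=T_{\omega_j}T_i(E_j)=T_{\omega_j}^{?}T_i(E_j)$ — exactly the shift by one unit of $\omega_j$ that produces $\hat T_{\omega_j}^{-1}(E_j)$ on the right-hand side. The quantum Serre relation $(R5)$ for the pair $(i,j)$ (with $1-a_{ij}=1+r$), together with the known brackets $[T_{\omega_i}T_i^{-1}(E_i),F_i]=0$ and $E_iT_{\omega_i}^{-1}(E_i)=q_i^2T_{\omega_i}^{-1}(E_i)E_i$ from the earlier lemma, should collapse the expression to the single term $-\gamma^{-1/2}[a_{ij}]_i\hat T_{\omega_j}^{-1}(E_j)$; the constant $[a_{ij}]_i=[-r]_i=-[r]_q$ emerges from the $q$-binomial coefficients in the Serre relation. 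The sign $o(i)$ in $\hat T_{\omega_j}=o(i)T_{\omega_j}$ is arranged precisely so that the $o(i)$ in $\psi_i$ and the $o(i)o(j)=-1$ rule cancel correctly.

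For the general $l$ in (1), I would apply $\hat T_{\omega_j}^{-l}$ to the $l=0$ case; since $T_{\omega_j}$ fixes $\psi_i(1)$ (Proposition part (2)) and $\hat T_{\omega_j}$ is an algebra automorphism up to the scalar $o(i)$, which is a central sign, the bracket is $\hat T_{\omega_j}$-equivariant and the identity propagates verbatim. Then (2) follows by a downward induction on the level: write $\psi_i(p_ik)$ via the exponential/functional equation relating the $E_{i,p_ik\delta}$ to the $\bar\psi_i(p_ik)$, use part (4) of the preceding proposition, $[a_i(p_ik),T_{\omega_i}^{-l}(E_i)]=\frac{[2r]_i}{k}T_{\omega_i}^{-(k+l)}(E_i)$, to push the level down by $r$ each time, and feed in (1) as the base case; the coefficients $q_i^{\pm a_{ij}}$ on the right of (2) are the weights picked up when $\hat T_{\omega_j}^{-(l+1)}(E_j)$ is moved past $\psi_i(p_ik-r)$, governed by $(R3)$. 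I expect the main obstacle to be the bookkeeping in this last induction — keeping the three scalings ($q_i$-powers, $\gamma$-powers, and the $o(i)$ signs) consistent while the level drops by $r$ rather than by $1$ as in the simply-laced Lemma \ref{lemma2} — rather than any conceptually new input; the whole computation lives inside the rank-two algebra $\langle U_q^{(i)},U_q^{(j)}\rangle$ and parallels the $A_2^{(1)}$-type calculation of Beck and Damiani, with $r$ in place of $1$.
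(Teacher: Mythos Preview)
The paper itself gives no proof of this lemma: after recording the structural facts $\sigma(i)\neq i$, $\sigma(j)=j$, $p_i=d_i=1$, $d_j=r$, it simply says ``Then the next result follows,'' deferring implicitly to the argument of the preceding Lemma~\ref{lemma2} and to Damiani~[Da2]. Your proposal is therefore more detailed than what the paper offers, and your overall strategy---reduce to the rank-two subalgebra, use $T_{\omega_j}$-invariance of $\psi_i$ (Proposition~3.5(2)) to normalize $l$, apply $\Phi$ to pass from (1),(2) to (3),(4), and prove (1) at $l=0$ from the Serre relation together with $T_{\omega_i}T_i^{-1}(E_j)=T_{\omega_j}T_i(E_j)$---is exactly the Beck--Damiani line the paper has in mind.

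There is one genuine slip in your sketch, concerning relation (2). You propose to derive (2) from (1) by ``writing $\psi_i(p_ik)$ via the exponential/functional equation'' and invoking part~(4) of Proposition~3.5. But Proposition~3.5(4) is the relation $[a_i(p_ik),T_{\omega_i}^{-l}(E_i)]=\frac{[2r]_i}{k}T_{\omega_i}^{-(k+l)}(E_i)$, which governs the action on $E_i$, not on $E_j$; it does not help here. Worse, the $[a_i(m),E_j]$ relations you would need are precisely the content of the \emph{subsequent} Lemmas~3.11--3.12, which the paper states ``follow directly from Lemma~\ref{lemma2} and~\ref{lemma3}''---so routing through the $a_i$'s is circular. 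The correct argument for (2) is the direct one, exactly parallel to Lemma~\ref{lemma2}(2): unpack $\bar\psi_i(k)=E_{k\delta-\alpha_i}E_i-q_i^{-2}E_iE_{k\delta-\alpha_i}$ and commute each real-root-vector factor past $E_j$ using the quadratic relations among root vectors (the precursors of relation~(8)); the shift by $r$ rather than by $1$ appears because $p_j=r$ governs the $T_{\omega_j}$-step. This is a bookkeeping correction, not a change of strategy, but as written your derivation of (2) does not go through.
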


 The following statements follow directly from Lemma $\ref{lemma2}$ and $\ref{lemma3}$. Note that
 $d_ia_{ij}=\sum_{s=0}^{r-1}A_{i,\sigma^s(j)}$ for $i,\,j \in I$, which will be used later.

\begin{lemm}\, Let $i\neq j\in I$ such that $a_{ij}a_{ji}=1$. For $k>1,\,l\in \mathbb{Z}$ one has

 \begin{eqnarray*}
&(1)&\, [\,a_{i}(p_ik),\,\hat{T}_{\omega_j}^{-l}(E_j) \,]=\frac{[k a_{ij}]}{k}\hat{T}_{\omega_j}^{-(k+l)}(E_j)\\
&(2)&\, [\,a_{i}(p_ik),\,\hat{T}_{\omega_j}^{l}(F_j) \,]=-\frac{[k a_{ij}]}{k}\hat{T}_{\omega_j}^{(k+l)}(F_j),\\
&(3)&\, [\,a_{i}(k),\,a_{j}(-l) \,]=\delta_{k,l}\frac{[k a_{ij}]}{k}\frac{K_{k\delta}-K_{k\delta}^{-1}}{q_j-q_j^{-1}}.
\end{eqnarray*}

\end{lemm}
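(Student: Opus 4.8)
The plan is to derive this lemma as a direct consequence of Lemma~\ref{lemma2} and Lemma~\ref{lemma3} together with the exponential relation defining $a_i(p_ik)$ from $\bar\psi_i(p_ik)$ (equivalently the ${\psi}_i(p_ik)$). Recall that the generating series of the $a_i$ and that of the ${\psi}_i$ are related by a formal exponential identity: writing $\psi_i(u)=\sum_{k>0}\psi_i(p_ik)u^k$ and $a_i(u)=\sum_{k>0}a_i(p_ik)u^k$, one has (up to normalization by $o(i)$, the factor $(q_i-q_i^{-1})$, and the central element $\gamma^{-k/2}K_i$) a relation of the form $\psi_i(u)\sim \exp\bigl((q_i-q_i^{-1})a_i(u)\bigr)$. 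Since everything in sight is built from the single vertex subalgebra $U_q^{(i)}$ acting by (twisted) derivations on the root vectors $\hat T_{\omega_j}^{\pm l}(E_j)$, $\hat T_{\omega_j}^{\pm l}(F_j)$, the two families of formulas are equivalent after ``taking logs'' of the recursions.

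First I would treat part (1). The key observation is that the commutators $[\psi_i(k),-]$ with $\hat T_{\omega_j}^{-l}(E_j)$ in Lemma~\ref{lemma2}(1)--(2) have exactly the shape of a $q$-difference recursion: the bracket with $\psi_i(k)$ lowers the ``$\psi$-degree'' by one and shifts the $\omega_j$-exponent by $-1$, with coefficients $q$ and $q^{-1}$ (here $p_i=1$, $r=1$, $a_{ij}=-1$, $a_{ji}=-1$ in the $a_{ij}a_{ji}=1$ case, so $q_i=q$). I would package these into the generating-series identity
\begin{equation*}
[\psi_i(u),\,\hat T_{\omega_j}^{-l}(E_j)]
=\gamma^{-\frac12}\,u\Bigl(q\,\hat T_{\omega_j}^{-(l+1)}(E_j)\,\psi_i(u)-q^{-1}\psi_i(u)\,\hat T_{\omega_j}^{-(l+1)}(E_j)\Bigr)-\gamma^{-\frac12}u\,\hat T_{\omega_j}^{-(l+1)}(E_j),
\end{equation*}
then substitute $\psi_i(u)=\exp\bigl((q_i-q_i^{-1})a_i(u)\bigr)$ (modulo the central factors absorbed into the normalization), and match coefficients of $u^k$. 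Taking the ``logarithmic'' version of the $q$-commutator recursion converts the twisted (multiplicative) $q$-bracket into an ordinary (additive) commutator $[a_i(p_ik),\hat T_{\omega_j}^{-l}(E_j)]$, producing the closed-form coefficient $\frac{1}{k}[ka_{ij}]\,\hat T_{\omega_j}^{-(k+l)}(E_j)$; the $q$-integer $[ka_{ij}]$ appears as the usual $\frac{q^{ka_{ij}}-q^{-ka_{ij}}}{q-q^{-1}}$ coming from summing the geometric-type contributions in the exponential expansion. This is exactly the computation in Beck~\cite{B} and Damiani~\cite{Da2} that passes from the $\psi$-relations to the $a$-relations, and for $a_{ij}a_{ji}=1$ the relevant $q_i=q$ so no rescaling of $q$-integers intervenes. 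Part (2) follows by applying $\Phi$ to (1): since $\Phi(E_j)=F_j$, $\Phi$ intertwines $\hat T_{\omega_j}$ with $\hat T_{\omega_j}$ in the appropriate way (using Remark on $T_i\Phi=\Phi T_i$ together with $\hat T_{\omega_j}=o(j)T_{\omega_j}$, so the sign bookkeeping is trivial) and $\Phi(a_i(p_ik))$ relates to $a_i(-p_ik)$ up to central factors; the sign flip in (2) versus (1) is the usual one between the $E$- and $F$-sides, already visible in Lemma~\ref{lemma2}(3)--(4) vs.\ (1)--(2).

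For part (3), I would combine (2) with the defining commutation relation $[E_j,F_j]=\frac{K_j-K_j^{-1}}{q_j-q_j^{-1}}$ (relation $(R4)$), or more directly argue as in Beck: bracket $a_i(k)$ with $a_j(-l)$ using that $a_j(-l)$ is, up to the central factor $\gamma^{l/2}$ and the $o(j)^{-l}$ sign, the imaginary root vector $F_{j,lp_j\delta}$, which in turn is extracted from $\bar\varphi_j(p_jl)=\Phi(\bar\psi_j(p_jl))$, a quadratic expression in $F_j$ and $\hat T_{\omega_j}^{l}(F_j)$-type vectors. Feeding the formula of part (2) into this quadratic expression and collecting, the commutator telescopes to a scalar multiple of $K_{k\delta}-K_{k\delta}^{-1}$ over $q_j-q_j^{-1}$, with scalar $\delta_{k,l}\frac{[ka_{ij}]}{k}$; the Kronecker delta emerges because only the ``diagonal'' degree term survives once one uses that $[a_i(p_ik),a_i(p_il)]=0$ and Lemma~7 ($[E_{i,kp_i\delta},E_{j,lp_j\delta}]=0$). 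The main obstacle I anticipate is the exponential/logarithm bookkeeping in steps (1)--(2): one must be careful that the multiplicative $q$-commutator recursion for $\psi_i$ linearizes correctly to the additive one for $a_i$, tracking the central elements $\gamma^{\pm k/2}$, the normalization constants $o(i)^k(q_i-q_i^{-1})K_i$, and the fact that $[ka_{ij}]$ (rather than $[ka_{ij}]_i$) is the right $q$-integer — but since $a_{ij}a_{ji}=1$ forces $d_i=d_j=1$ this last point is automatic. Everything else is formal manipulation of generating series, entirely parallel to the untwisted treatment in~\cite{B} and~\cite{Da2}, as indicated by the remark preceding the statement.
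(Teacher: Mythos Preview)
Your proposal is correct and follows exactly the route the paper indicates: the paper gives no proof beyond the single sentence ``The following statements follow directly from Lemma~\ref{lemma2} and \ref{lemma3}'', so your outline of the exponential/logarithm passage from the $\psi_i$-recursions to the $a_i$-commutators (as in Beck~\cite{B} and Damiani~\cite{Da2}) is precisely the intended mechanism, spelled out in more detail than the paper itself provides. One minor remark: since the hypothesis is $a_{ij}a_{ji}=1$, only Lemma~\ref{lemma2} (the $a_{ij}a_{ji}=1$ case) is actually needed here; Lemma~\ref{lemma3} feeds into the companion lemma for the $a_{ij}=-r$, $a_{ji}=-1$ case.
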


\begin{lemm}\label{lemma2}\, Let $i\neq j\in I$ such that $a_{ij}=-r$ and $a_{ji}=-1$. Then for $k>r,\,l\in \mathbb{Z}$,

 \begin{eqnarray*}
&(1)&\, [\,a_{i}(p_ik),\,\hat{T}_{\omega_j}^{-l}(E_j) \,]= \begin{cases}
\sum_{s=0}^{r-1}\frac{[k A_{i,\sigma^{s}(j)}]_i}{k}\hat{T}_{\omega_j}^{-(k+l)}(E_j), \quad \hbox{if} \quad p_j|k\vspace{6pt}\\
0, \qquad\qquad\qquad\qquad\qquad\qquad \hbox{otherwise}
\end{cases} \\
&(2)&\, [\,a_{i}(p_ik),\,\hat{T}_{\omega_j}^{l}(F_j) \,]=\begin{cases}
-\sum_{{s=0}}^{r-1}{\frac{[k A_{i,\sigma^{s}(j)}]_i}{k}}\hat{T}_{\omega_j}^{(k+l)}(F_j), \quad \hbox{if} \quad p_j|k \vspace{6pt}\\
0, \qquad\qquad\qquad\qquad\qquad\qquad \hbox{otherwise}
\end{cases},\\
&(3)&\, [\,a_{i}(p_ik),\,a_{j}(-p_jl) \,]=\delta_{kp_i,lp_j}\sum_{{s=0}}^{r-1}\frac{[k A_{i,\sigma^{s}(j)}]}{k}\frac{K_{kp_i\delta}-K_{kp_i\delta}^{-1}}{q_j-q_j^{-1}}.
\end{eqnarray*}

\end{lemm}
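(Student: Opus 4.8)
The plan is to derive all three identities from the $\psi$-relations of Lemma~\ref{lemma3} together with the functional equation $\exp\!\big((q_i-q_i^{-1})\sum_{k>0}E_{i,kp_i\delta}u^k\big)=1+(q_i-q_i^{-1})\sum_{k>0}\bar\psi_i(p_ik)u^k$ defining the imaginary root vectors. Write $\bar\Psi_i(u)$ for the right-hand side, so that $\bar\Psi_i(u)=\exp\mathcal A_i(u)$ with $\mathcal A_i(u)=(q_i-q_i^{-1})\sum_{k>0}E_{i,kp_i\delta}u^k$. Since $a_i(\pm p_ik)$ and $\psi_i(p_ik)$ are obtained from $E_{i,kp_i\delta}$, $F_{i,kp_i\delta}$ and $\bar\psi_i(p_ik)$ by the fixed rescalings fixed above (multiplication by $o(i)^k\gamma^{\mp k/2}$, and in the $\psi$ case by $(q_i-q_i^{-1})K_i$), the currents $\bar\Psi_i(u)$ and $\mathcal A_i(u)$ carry the same information as the families $\{\psi_i(p_ik)\}$ and $\{a_i(p_ik)\}$. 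Hence every relation of the ``multiplicative'' recursive type of Lemma~\ref{lemma3} should translate, after passing to generating series and taking a logarithm, into an ``additive'' relation among the $a_i(p_ik)$. This is precisely the device of Beck and Damiani in the untwisted and folded simply-laced settings, and the statement to be proved is its twisted analogue.

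For (1), introduce in addition the current $\mathcal E_j(z)=\sum_{l\in\mathbb Z}\hat T_{\omega_j}^{-l}(E_j)\,z^{l}$. Here $p_i=d_i=1$ and $a_{ij}=-r$, so $q_i^{-a_{ij}}=q^{r}$, and Lemma~\ref{lemma3}(1)--(2) assemble into a single functional equation $\bar\Psi_i(u)\,\mathcal E_j(z)=C(u,z)\,\mathcal E_j(z)\,\bar\Psi_i(u)$ for an explicit rational function $C(u,z)$ (the twisted ``structure function''). Conjugating and expanding $\log C(u,z)$ gives a relation of the shape $[\mathcal A_i(u),\hat T_{\omega_j}^{-l}(E_j)]=\big(\text{a series in }u\big)\cdot\hat T_{\omega_j}^{-l}(E_j)$, with the shift in $l$ read off from the monomials of $\log C$. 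Comparing $Q^+$-weights finishes the bookkeeping: $a_i(p_ik)$ has weight $k\delta$ while $\hat T_{\omega_j}^{-m}(E_j)$ has weight $mp_j\delta+\alpha_j$, so $[a_i(p_ik),\hat T_{\omega_j}^{-l}(E_j)]$ can only be a scalar multiple of the single shifted vector $\hat T_{\omega_j}^{-(l+k/p_j)}(E_j)$ when $p_j\mid k$, and must vanish otherwise --- exactly the dichotomy claimed. The surviving scalar is the coefficient extracted from $\log C$; using the identity $d_ia_{ij}=\sum_{s=0}^{r-1}A_{i,\sigma^s(j)}$ recorded before the statement, it organizes into $\tfrac1k\sum_{s=0}^{r-1}[kA_{i,\sigma^s(j)}]_i$ rather than a single $q$-integer.

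Part (2) is obtained by the same generating-series argument applied to Lemma~\ref{lemma3}(3)--(4) (the $F$-side $\psi$-relations), or equivalently by transporting (1) through the antiautomorphism $\Phi$, which commutes with each $T_i$ (the Remark above) and satisfies $\Phi(E_{i,kp_i\delta})=F_{i,kp_i\delta}$. For part (3), express $a_j(-p_jl)$ via the $F$-side functional equation in terms of the lower root vectors of $U_q^{(j)}$ --- ultimately $\hat T_{\omega_j}$-translates of $F_j$ together with $T_j^{-1}(F_j)=-E_jK_j$ --- and bracket $a_i(p_ik)$ through this expression using part (2), the relation $[a_i(p_ik),K_j]=0$ (which holds since $\langle\delta,\alpha_j^{\lor}\rangle=0$), part (1) for the $E_j$ factor produced by $-E_jK_j$, and $[E_j,F_j]=\dfrac{K_j-K_j^{-1}}{q_j-q_j^{-1}}$ from $(R4)$. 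This telescopes exactly as in the proof of the diagonal relation $[a_i(p_ik),a_i(-p_il)]=\delta_{k,l}\frac{[2r]_i}{k}\frac{\gamma^{k}-\gamma^{-k}}{q_i-q_i^{-1}}$ of the Proposition above: the factor $\delta_{kp_i,lp_j}$ is again the weight constraint $kp_i\delta=lp_j\delta$, only the single ``diagonal'' contribution through the Cartan term survives, and repeated use of $(R3)$ promotes $K_j$ to $K_{kp_i\delta}$.

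The principal difficulty is the computation in (1): one must carry the exact shift $l\mapsto l+k/p_j$ and the precise scalar through the logarithm of the generating-series identity, and check that the answer reorganizes into the folded sum $\tfrac1k\sum_{s=0}^{r-1}[kA_{i,\sigma^s(j)}]_i$; this is where the $r$-periodicity built into the step-$r$ recursion of Lemma~\ref{lemma3}, and the relation of $\hat T_{\omega_j}$ to the branch node $j$ (for which $\sigma(j)=j$ and $p_j=r$), must be used with care. Once (1) is established with the correct normalization, parts (2) and (3) are formal.
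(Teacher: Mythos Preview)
Your proposal is correct and aligns with the paper's own treatment: the paper does not give a detailed proof but simply asserts that the lemma ``follows directly from'' the $\psi$-relations of Lemmas~\ref{lemma2} and~\ref{lemma3}, together with the folding identity $d_ia_{ij}=\sum_{s=0}^{r-1}A_{i,\sigma^s(j)}$. Your generating-series/logarithm argument is precisely the standard Beck--Damiani mechanism (cited in the paper via \cite{B}, \cite{Da2}) by which one passes from the multiplicative $\psi$-recursions to the additive $a$-relations, so you are supplying the details the paper leaves implicit rather than taking a different route.
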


\subsection{Copies of $A_2^{(2)}$ in $A_{2n}^{(2)}$}
For the case of $A_{2n}^{(2)}$, the definition of the quantum root vectors differ from other twisted cases because of its slightly complicated root system. This situation has been discussed in details in \cite{Da2}, here we will review some definitions and results. The case of $A_2^{(2)}$ has been dealt with care in \cite{A}.

In this paragraph we discuss the remaining cases and show that for $i=n$, there exists a copy of
$U_q^{(i)}\simeq U_q(A_2^{(2)})$ in $A_{2n}^{(2)}$. %, here $U_q^{(i)}$ is isomorphic to.

Let us introduce the longest element of the Weyl group $W$:
$$w=s_0s_1s_2\cdots s_n,$$
then $w^{n-1}(\alpha_0)=\delta-2\alpha_n$, where $\delta=\alpha_0+2\alpha_1+\dots+2\alpha_n$.

In particular, define $E_{\delta-2\alpha_n}=T_{w}^{n-1}(E_0)$, and $F_{\delta-2\alpha_n}=\Phi(E_{\delta-2\alpha_n})$.

\begin{prop}\, Let  $U_q^{(n)}$ be the subalgebra generated by $E_n,\, F_n,\, K_n^{\pm 1},\, E_{\delta-2\alpha_n}$,
$F_{\delta-2\alpha_n},\,K_{\delta-2\alpha_n}$. There exists an
algebra isomorphism $\varphi_n: \, U_q(A_2^{(2)})\rightarrow
U_q^{(n)}$ defined as follows:
\begin{equation*}
\begin{split}
&\varphi_n(E_1)=E_n, \qquad\qquad   \varphi_n(F_1)=F_n, \qquad\qquad \varphi_n(K_1^{\pm 1})=F_n^{\pm 1}, \\
&\varphi_n(E_0)=E_{\delta-2\alpha_n}, \qquad \varphi_n(F_0)=F_{\delta-2\alpha_n},
\qquad \varphi_n(K_0^{\pm 1})=K_{\delta-2\alpha_n}^{\pm 1}.
\end{split}
\end{equation*}
\end{prop}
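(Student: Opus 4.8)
The plan is to establish the isomorphism $\varphi_n\colon U_q(A_2^{(2)})\to U_q^{(n)}$ by exhibiting it first as a surjective algebra homomorphism and then verifying injectivity via a weight/triangular-decomposition argument, exactly in the spirit of the earlier Proposition for $\varphi_i$ (the $U_q(A_1^{(1)})$ case). First I would check that the assignment respects the defining relations of $U_q(A_2^{(2)})$ with respect to its Cartan matrix $\begin{pmatrix} 2 & -1 \\ -4 & 2\end{pmatrix}$ (with the short node being node $1$, long node $0$, after renormalization $d_1=1,\ d_0=4$, matching $q_1=q_n$). The relations $(R1)$–$(R4)$ are essentially formal: $\gamma=K_\delta$ is central by $(R1)$ for $U_q(\widehat{\mathfrak g}^\sigma)$; $K_nK_{\delta-2\alpha_n}=K_{\delta-2\alpha_n}K_n$ and the $K$-eigenvalue relations $(R3)$ follow because $E_{\delta-2\alpha_n}=T_w^{n-1}(E_0)$ has weight $\delta-2\alpha_n$ and one computes $\langle \delta-2\alpha_n,\alpha_n^\vee\rangle$ and $\langle\alpha_n,(\delta-2\alpha_n)^\vee\rangle$ against the $A_{2n}^{(2)}$ Cartan data to get $-1$ and $-4$ respectively (here one uses that in $A_{2n}^{(2)}$ the node $n$ is the ``doubled'' short node with $d_n=1$); the commutation relation $[E_{\delta-2\alpha_n},F_n]$ should vanish, which is a braid-group computation analogous to Lemma~3.6(1) (that $[T_{\omega_i}T_i^{-1}(E_i),F_i]=0$), and $[E_{\delta-2\alpha_n},F_{\delta-2\alpha_n}]=\dfrac{K_{\delta-2\alpha_n}-K_{\delta-2\alpha_n}^{-1}}{q_0-q_0^{-1}}$ follows by applying the braid operator $T_w^{n-1}$ (which is an algebra automorphism) to the known relation $[E_0,F_0]$ in $U_q(\widehat{\mathfrak g}^\sigma)$, after checking $T_w^{n-1}$ sends $F_0\mapsto F_{\delta-2\alpha_n}$ and $K_0^{\pm1}\mapsto K_{\delta-2\alpha_n}^{\pm1}$ — the last because $T_w$ acts on the weight lattice by the Weyl group element $w$ and $w^{n-1}(\alpha_0)=\delta-2\alpha_n$.

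The substantive step is the two Serre relations of $U_q(A_2^{(2)})$: with $a_{10}=-1$ one needs the ordinary quantum Serre relation $E_1^2E_0-[2]_1E_1E_0E_1+E_0E_1^2=0$ translated to $E_n,E_{\delta-2\alpha_n}$, and with $a_{01}=-4$ one needs the degree-five Serre relation in $E_0$. The first I would obtain by applying $T_w^{n-1}$ to a Serre-type relation already valid in $U_q(\widehat{\mathfrak g}^\sigma)$, or more cleanly by using the Beck/Damiani-style argument (referenced via Lemma~3.5 and the $U_q^{(n)}$ formalism together with Proposition~3.9) identifying $E_{\delta-2\alpha_n}$ with $T_{\omega_n}T_n^{-1}(E_n)$ up to a scalar — note $\omega_n=r\,\omega_n^\vee$ with $r=2$ here in the relevant normalization, and the reduced-expression bookkeeping $w^{n-1}=\dots$ should match $T_{\omega_n}T_n^{-1}$ modulo commuting $T_{\omega_j}$'s with $j\ne n$, which act trivially on $U_q^{(n)}$. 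The degree-five relation is the genuine obstacle and I expect to reduce it, using Remark~2.3(3), to a lower-degree identity: the $a_{ij}=-3$ display in Remark~2.3 shows the quartic relation follows from the cubic $A=E_jE_i^2-[2]_iE_iE_jE_i+E_i^2E_j$, and an analogous telescoping (which I would need to write out once) shows the quintic $a_{01}=-4$ Serre relation follows from the same cubic-type element; the cubic relation in turn is exactly the image of the Serre relation at the short node of $A_{2n}^{(2)}$ under $T_w^{n-1}$. This ``higher Serre from lower Serre'' bookkeeping is where the calculation has real content.

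Having a surjective homomorphism $\varphi_n$ (surjectivity is clear since the images generate $U_q^{(n)}$ by definition), I would prove injectivity by the standard argument: $U_q(A_2^{(2)})$ has a triangular decomposition $U^-\otimes U^0\otimes U^+$, $\varphi_n$ is compatible with the $Q$-grading (sending $\alpha_1\mapsto\alpha_n$, $\alpha_0\mapsto\delta-2\alpha_n$, which are linearly independent in the weight lattice of $U_q(\widehat{\mathfrak g}^\sigma)$), and $\varphi_n(K_1),\varphi_n(K_0)$ are algebraically independent invertible elements; then it suffices to show $\varphi_n|_{U^+}$ is injective. For that I would use Lemma~\ref{L:comm-br}: the skew-derivations $r_1,r_0$ detect elements of $U^+$, and since $E_n$ and $E_{\delta-2\alpha_n}=T_w^{n-1}(E_0)$ lie in distinct weight spaces and $T_w^{n-1}$ is injective, one can transport the nondegeneracy of the pairing — equivalently, one checks that the corresponding ``$q$-derivatives'' on the subalgebra generated by $E_n,E_{\delta-2\alpha_n}$ separate points, which follows because $E_{\delta-2\alpha_n}$ is a genuine (nonzero) root vector and the two simple-root directions are independent. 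The only place where one must be slightly careful is the normalization constant $c(q)$ relating $T_w^{n-1}(E_0)$ to $T_{\omega_n}T_n^{-1}(E_n)$ (cf.\ Remark~\ref{r1}); absorbing this scalar does not affect the isomorphism statement. Thus the main obstacle is purely the quintic Serre relation verification at the long node, handled by the reduction in Remark~2.3(3); everything else is formal transport of structure along the algebra automorphism $T_w^{n-1}$ plus a routine triangular-decomposition injectivity argument.
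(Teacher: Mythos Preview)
The paper does not supply its own proof of this proposition; the surrounding text says the situation ``has been discussed in detail in \cite{Da2}'' and that ``the case of $A_2^{(2)}$ has been dealt with carefully in \cite{A}'', and then simply states the result. So there is no paper-side argument to compare against, only the references.

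Your outline is the natural one, and the formal checks for $(R1)$--$(R4)$ via transport along the algebra automorphism $T_w^{\,n-1}$ are fine. The genuine gap is your treatment of the quintic Serre relation. Remark~2.2(3) does \emph{not} furnish the reduction you want: look at the $a_{ij}=-2$ identity there, where the quadratic element is $A=E_jE_i^2-[2]_{2i}E_iE_jE_i+E_i^2E_j$ with $[2]_{2i}=q_i^2+q_i^{-2}$, \emph{not} the $[2]_i$ that appears in an $a_{ij}=-1$ Serre relation. These telescoping identities rewrite the higher Serre polynomial in terms of a quadratic expression $A$, but that $A$ is not itself a known relation and is in fact nonzero in $U_q(A_2^{(2)})$ (if it vanished, your map could not be injective). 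So ``quintic follows from cubic'' fails as stated. Your fallback, transporting ``the Serre relation at the short node of $A_{2n}^{(2)}$ under $T_w^{\,n-1}$'', also does not land where you need it without substantial extra work: the only Serre relation adjacent to $E_0$ in $A_{2n}^{(2)}$ is with $E_1$, and you would have to show $T_w^{\,n-1}$ sends $E_1$ to (a scalar times) $E_n$ and that the transported relation has the right $q$-coefficients to match the $a_{01}=-4$ Serre relation of $U_q(A_2^{(2)})$; none of this is automatic. In \cite{Da2} the corresponding verification is done by direct braid-group computations specific to $A_{2n}^{(2)}$, not by a telescoping trick.

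Your injectivity sketch also needs repair. Lemma~\ref{L:comm-br} concerns the twisted derivations $r_i$ on the full $U_q^+$ indexed by $i\in\hat I$; there is no $r_{\delta-2\alpha_n}$ available, and the restriction of $r_n$ alone to the subalgebra generated by $E_n,E_{\delta-2\alpha_n}$ does not detect everything. The standard route (used in \cite{A} and \cite{Da2}) is to invoke a PBW basis for $U_q(A_2^{(2)})$ and check that the images of PBW monomials remain linearly independent inside $U_q(\widehat{\mathfrak g}^\sigma)$, which in turn uses the convex-order PBW theory for the ambient algebra.
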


\section{Drinfeld realization for twisted cases}

\subsection{Drinfeld generators}\, In order to obtain the Drinfeld realization of twisted quantum affine algebras,
we introduce Drinfeld generators  as follows.

\begin{defi}\, For $k>0$, define

\begin{align*}
x_i^+(k)=\begin{cases}
\hat{T}_{\omega_i}^{-k}(E_i), \quad \hbox{if} \quad \sigma(i)\neq i \quad\hbox{or} \quad \sigma(i)=i \quad\hbox{and}\quad r|k \vspace{6pt}\\
0, \qquad \qquad\hbox{otherwise}
\end{cases},
\end{align*}
\begin{align*}
x_i^-(k)=\begin{cases}
-o(i)^k\gamma^{-k}K_iE_{k\delta-\alpha_i}, \quad \hbox{if} \quad \sigma(i)\neq i \quad\hbox{or} \quad \sigma(i)=i \quad\hbox{and}\quad r|k \vspace{6pt}\\
0, \qquad \qquad \hbox{otherwise}
\end{cases}.
\end{align*}
\end{defi}

\begin{defi}\, For  $k<0$, define

\begin{align*}
x_i^+(k)=\begin{cases}
-o(i)^kF_{-\alpha_i-k\delta}K_i^{-1}\gamma^k,\quad \hbox{if} \quad \sigma(i)\neq i \quad\hbox{or} \quad \sigma(i)=i \quad\hbox{and}\quad r|k;\vspace{6pt}\\
0, \qquad \qquad\hbox{otherwise},
\end{cases}
\end{align*}
\begin{align*}
x_i^-(k)=\begin{cases}
\hat{T}_{\omega_i}^{k}(F_i), \quad \hbox{if} \quad \sigma(i)\neq i \quad\hbox{or} \quad \sigma(i)=i \quad\hbox{and}\quad r|k;\vspace{6pt}\\
0, \qquad \qquad \hbox{otherwise},
\end{cases}
\end{align*}
\end{defi}

\begin{remark}\,From the above definitions it follows that $a_i(k)=0$  if $\sigma(i)=i$ and $k$ is not divisible
by $r$.
\end{remark}

For convenience we extend the indices from $I$ to $\{1,\,2,\, \dots, N\}$. For $i\in
\{n+1, n+2,\, \dots , N\}$ and $k\in\mathbb{Z}$, $l\in
\mathbb{Z}/\{0\}$, we define that
$$x_i^{\pm}(k)=\omega^{-k}x_{\sigma(i)}^{\pm}(k),\quad a_i(l)=\omega^{-l}a_{\sigma(i)}(l),\quad K_i=K_{\sigma(i)}.$$

\subsection{Drinfeld realization for twisted cases}
In previous sections we have prepared for the relations among Drinfeld generators. The complete
relations are given in the following theorem stated first in
\cite{Dr}. In the following we will set out to prove the remaining Serre relations using braid groups
and other techniques developed in \cite{ZJ, JZ}.

\begin{theo}\label{t1}{\, The twisted quantum affine algebra $U_q(\widehat{\frak{g}}^{\sigma})$ is generated by the elements $x_i^{\pm}(k), a_i(l),\, K_i^{\pm1}$ and $\gamma^{\frac{1}{2}}$, where $i\in \{1,\,2,\, \dots, N\}$, $k\in \mathbb{Z}$, and $l\in \mathbb{Z}/\{0\}$, satisfying the following relations:
\begin{eqnarray*}
          &(1)&x_i^{\pm}(k)=\omega^{-k}x_{\sigma(i)}^{\pm}(k),\quad a_i(l)=\omega^{-l}a_{\sigma(i)}(l),\quad K_i=K_{\sigma(i)},\\
          &(2)&\quad [\gamma^{\pm \frac{1}{2}},~u]=0    \ \ \ \mbox{for all} \,\, u\in U_q(\widehat{\frak{g}}^{\sigma}),\vspace{0.1cm}\\
        &(3)&\quad [a_i(k),a_j(l)]=\delta_{k+l,~0}\sum\limits_{s=0}^{r-1}
         \frac{[k A_{i,\sigma^s(j)}]_i}{k}\omega^{ks}
         \frac{\gamma^k-\gamma^{-k}}{q_j-q_j^{-1}},\vspace{0.1cm}\\
       &(4)&\quad [a_i(k),K_j^{\pm}]=0,\vspace{0.1cm}\\
       &(5)&\quad K_ix_j^{\pm}(k) K_i^{-1} =q^{\pm\sum\limits_{s=0}^{r-1} A_{i,\sigma^s(j)}}x_j^{\pm}(k),\vspace{0.1cm}\\
       &(6)&\quad [a_i(k),~x_j^{\pm}(l)]= \pm \sum\limits_{s=0}^{r-1}
         \frac{[k A_{i,\sigma^s(j)}]_i}{k}\omega^{ks}
         \gamma^{\mp \frac{|k|}{2}}x_j^{\pm}{(k+l)},\vspace{0.1cm}\\
    &(7)&\quad [x_i^+(k),x_j^-(l)]=\sum\limits_{s=0}^{r-1}
\frac{\delta_{\sigma^s(i),j}\omega^{ls}}{q_i-q_i^{-1}}
(\gamma^{\frac{k-l}{2}}\psi_i(k+l)-\gamma^{\frac{l-k}{2}}\varphi_i(k+l)).
\end{eqnarray*}
where $\psi_i(m)$ and $\varphi_i(m)$  $(m\in \mathbb{Z}_{\geq 0})$ are defined by\\
$$\quad\sum\limits_{m=0}^{\infty}\psi_i(m)z^{-m} =K_i exp((q_i-q_i^{-1})\sum\limits_{k=1}^{\infty}
a_i(k)z^{-k}), $$
$$\sum\limits_{m=0}^{\infty}\varphi_i(-m)z^m =K_i^{-1} exp(-(q_i-q_i^{-1})\sum\limits_{k=1}^{\infty}
a_i(-k)z^k). $$
\begin{eqnarray*}
 &(8)&\quad \prod\limits_{s=0}^{r-1}(z-\omega^s
q^{\pm A_{i,\sigma^s(j)}}w)x_i^{\pm}(z)x_j^{\pm}(w)=
\prod
\limits_{s=0}^{r-1}(z q^{\pm A_{i,\sigma^s(j)}}-\omega^sw)
       x_j^{\pm}(w)x_i^{\pm}(z).\vspace{0.1cm}\\
&(9)&\,  Sym_{z_1,z_2}P_{ij}^{\pm}(z_1,z_2)\sum\limits_{s=0}^{2}(-1)^s\begin{bmatrix}2\\s\end{bmatrix}_{q^{d_{ij}}}
x_i^{\pm}(z_1)\cdots
x_i^{\pm}(z_s)x_j^{\pm}(w)x_i^{\pm}(z_{s+1})\cdots x_j^{\pm}(z_2)=0,\\
&&\hspace{5.8cm}\hbox{for} ~ A_{ij}=-1, \, \sigma(i)\neq j,\\
&(10)&\, Sym_{z_1,z_2,z_3}[(q^{\mp\frac{3r}{4}}z_1-(q^{\frac{r}{4}}+q^{-\frac{r}{4}})z_2+
q^{\pm\frac{3r}{4}}z_3)
x_i^{\pm}(z_1)x_i^{\pm}(z_2)x_i^{\pm}(z_3)]=0. \\
&& \hspace{5.8cm} \hbox{for} ~A_{i,\sigma(i)}=-1.
\end{eqnarray*}
 where  Sym means
the symmetrization over $z_i$, $x_i^{\pm}(z) = \sum_{k \in \mathbb{Z}}x_i^{\pm}(k) z^{-k}$,
 $P_{ij}^{\pm}(z,w)$
and $d_{ij}$ are defined as follows:\\
\begin{eqnarray*}
&&\hbox{If}\,\, \sigma(i)=i,\, \hbox{then}\,\,  P_{ij}^{\pm}(z,w)=1,\,  d_{ij}=r/2,\\
&&\hbox{If}\,\, A_{i,\sigma(i)}=0, \, \sigma(j)=j, \, \hbox{then\, } P_{ij}^{\pm}(z,w)=
\frac{z^rq^{\pm2r}-w^r}{zq^{\pm2}-w},\,  d_{ij}=r, \\
&&\hbox{If}\,\,  A_{i,\sigma(i)}=0, \, \sigma(j)\neq
j,\, \hbox{then}\,\, P_{ij}^{\pm}(z,w)=1,\, d_{ij}=1/2, \\
&&\hbox{If}\,\,   A_{i,\sigma(i)}=-1,\, \hbox{then}\,\, P_{ij}^{\pm}(z,w)=
zq^{\pm r/2}+w,\,  d_{ij}=r/2.
\end{eqnarray*}}
\end{theo}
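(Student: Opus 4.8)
The plan is to prove Theorem \ref{t1} by establishing two directions: first, that the Drinfeld generators $x_i^\pm(k), a_i(l), K_i^{\pm 1}, \gamma^{\pm 1/2}$ actually generate all of $U_q(\widehat{\frak g}^\sigma)$, and second, that they satisfy relations $(1)$--$(10)$. For the generation statement, one notes that $E_i = x_i^+(0)$ and $F_i = x_i^-(0)$ for $i\in I$ recover the Chevalley generators attached to the nodes of $\frak g_0$, so it suffices to recover $E_0, F_0, K_0$. Using Proposition on $\varphi_i$ (the $U_q(A_1^{(1)})$ or $U_q(A_2^{(2)})$ embedding), together with the explicit definitions $E_{kp_i\delta-\alpha_i} = T_{\omega_i}^k T_i^{-1}(E_i)$ and the braid relations from Lemma (parts (2),(3)) relating $T_i^{-1}T_{\omega_i}T_i^{-1}$ to products of $T_{\omega_j}$, one expresses $T_{\omega_i}T_i^{-1}(E_i)$ — hence the affine node generators — in terms of the $x_i^\pm(k)$. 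This part is essentially bookkeeping given the results already assembled in Section 3.

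The bulk of the work is verifying relations $(1)$--$(10)$. Relation $(1)$ is the definition of the extended indices, and $(2)$ is $(R1)$. Relations $(3)$, $(4)$, $(6)$ for the cases $a_{ij}a_{ji}\le 1$ and $a_{ij}=-r, a_{ji}=-1$ are exactly the content of the lemmas in \S3.5 (the $a_i(p_ik)$-bracket lemmas), combined with the $\hat T_{\omega_j}$-conjugation identities; one packages these into the stated form using $d_i a_{ij} = \sum_{s=0}^{r-1} A_{i,\sigma^s(j)}$ and the substitution $x_i^\pm(k)=\omega^{-k}x_{\sigma(i)}^\pm(k)$ to pass from orbit representatives to the full index set $\{1,\dots,N\}$. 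Relation $(5)$ follows from $(R3)$ and the action of $T_{\omega_i}$ on $K_j$ (Lemma part (3): $T_{\omega_i}(K_i)=\gamma K_i$, and $T_{\omega_j}$ fixes $K_i$ for $j\ne i$). Relation $(7)$, the $[x^+,x^-]$ relation giving $\psi$ and $\varphi$, is obtained for the diagonal case $i=j$ from the $U_q^{(i)}$-structure (the $\bar\psi_i$ are defined precisely to make this work via the exponential generating-function identity), for the case $\sigma^s(i)=j$ from the index extension, and for $a_{ij}=0$ it vanishes by the lemma $[U_q^{(i)},U_q^{(j)}]=0$; the genuinely new content is showing the cross terms vanish when $i,j$ lie in linked but distinct orbits, which reduces via Lemma \ref{L:comm-br} (the $r_i$-derivation criterion) to a finite check inside the rank-two subalgebra.

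The hard part — and the genuinely new contribution claimed in the introduction — is the degree relations $(8)$, $(9)$, $(10)$, i.e. the Drinfeld quadratic relations and the Serre relations. For $(8)$ the approach is to reduce, using the $T_{\omega_i}$-invariance of $a_i(k)$ (Proposition \S3.4, part (2)) and the $[a_i(k), x_j^\pm(l)]$ formula of $(6)$, to the degree-zero case, which is a finite computation inside $U_q^{(i)}$ and $U_q^{(j)}$ — this is parallel to Beck--Nakajima and Damiani in the untwisted setting and the excerpt signals it "also works" here. Relations $(9)$ and $(10)$ are the Serre relations, and here the plan is to use the $q$-bracket technique of \cite{J1, J2, ZJ, JZ}: one first verifies the low-degree Serre relations (degree $\le 2$ or $\le 3$ in the relevant index) directly — using the simplified identities displayed in Remark (the $a_{ij}=-2$ and $a_{ij}=-3$ rewritings expressing higher Serre relations as commutators of lower ones) — and then propagates them to all degrees by repeatedly applying $T_{\omega_i}$ and the commutation relations $(6)$, $(8)$ to generate the full $z$-dependence. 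The main obstacle is the case $A_{2n}^{(2)}$ with its three root lengths: the node $i=n$ requires the $U_q(A_2^{(2)})$-subalgebra of the Proposition in \S3.6 rather than $U_q(A_1^{(1)})$, the polynomial $P_{ij}^\pm$ and the shift $d_{ij}$ take their exceptional values there, and relation $(10)$ (valid for $A_{i,\sigma(i)}=-1$) demands the cubic symmetric relation with coefficients $q^{\pm 3r/4}$ that does not appear in untwisted types; verifying this cubic relation honestly, for all the exceptional configurations including $D_4^{(3)}$, is where the $q$-bracket algorithm of \cite{J2} must be invoked in full and is the step I expect to be most delicate.
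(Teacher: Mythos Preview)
Your treatment of relations $(1)$--$(8)$ tracks the paper's closely: both reduce via $T_{\omega_i}$-translation to a finite check, and both draw on the lemmas of \S3 for the Heisenberg and mixed relations. The divergence is in the Serre relations $(9)$ and $(10)$. The paper does \emph{not} rely on the Remark~2.2 identities (those rewrite the Drinfeld--Jimbo Serre relations $(R5)$--$(R6)$, not the Drinfeld relations $(9)$--$(10)$, and the base-case expressions here involve elements like $\hat T_{\omega_i}^{-1}(E_i)$ which are not Chevalley generators). Instead, after translating to a single representative expression $X$ (or $Y$, $Z$), the paper invokes Lemma~\ref{lemma1}: if $X\in U_q^+$ and $[X,F_k]=0$ for all $k\in\hat I$, then $X=0$. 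The bulk of the Serre-relation proof is the explicit computation of each $[X,F_k]$, using relations $(5)$--$(7)$ already established. You use the $r_i$-derivation criterion for the cross terms in $(7)$ but never bring it (or its equivalent, Lemma~\ref{lemma1}) to bear on $(9)$--$(10)$; that is the missing key idea. The inductive propagation via $a_i(1)$ that you sketch \emph{is} what the paper does for Case~(i) of $(9)$ (where $\sigma(i)=i$ and the base case is literally $(R5)$), but for the remaining cases --- including the $P_{ij}^\pm\ne 1$ cases and $(10)$ --- the $[X,F_k]=0$ argument is what actually closes the proof.

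One small misattribution: $D_4^{(3)}$ has $A_{i,\sigma(i)}=0$ (the three outer nodes are mutually nonadjacent), so it falls under relation $(9)$, Case~(ii), not relation $(10)$. Relation $(10)$ is exclusive to $A_{2n}^{(2)}$.
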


\subsection{Proof of the main theorem}\, We need to verify that the above Drinfeld generators
 $x_i^{\pm}(k), a_i(l),\, K_i^{\pm1}$ satisfy all relations $(1)-(10)$.
% which is first listed in the paper \cite{Dr}.
The relations $(1)-(7)$ are already checked in the previous paragraphs.
 %which are developed in the paper \cite{Da2}, which is also similar to non twisted cases (\cite{B}).
We are going to show the last three relations.

We first proceed to check relation $(8)$.

\begin{prop}\, For all $i, j\in I$ one has that
$$ \prod\limits_{s=0}^{r-1}(z-\omega^s
q^{\pm A_{i,\sigma^s(j)}}w)x_i^{\pm}(z)x_j^{\pm}(w)=
\prod
\limits_{s=0}^{r-1}(z q^{\pm A_{i,\sigma^s(j)}}-\omega^sw)
       x_j^{\pm}(w)x_i^{\pm}(z),$$
where $x_i^{\pm}(z) = \sum_{k \in \mathbb{Z}}x_i^{\pm}(k) z^{-k}$.
\end{prop}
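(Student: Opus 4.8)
The plan is to reduce relation~(8) to a single ``initial'' identity inside the rank-two subalgebra attached to $\{i,j\}$ and then to propagate it to all Fourier modes using the braid translations $T_{\omega_i},T_{\omega_j}$ and the adjoint action of the imaginary root vectors $a_i(k)$. It suffices to treat the upper sign: the lower sign follows either by applying the antiautomorphism $\Phi$ (which interchanges the $x_i^+$'s and the $x_i^-$'s up to central factors and inverts $q$) or simply by repeating the argument with $E$ replaced by $F$ and $T_{\omega_i}^{-1}$ by $T_{\omega_i}$. Expanding $\prod_{s=0}^{r-1}(z-\omega^sq^{A_{i,\sigma^s(j)}}w)$ and $\prod_{s=0}^{r-1}(zq^{A_{i,\sigma^s(j)}}-\omega^sw)$ into elementary symmetric functions of $\{\omega^sq^{A_{i,\sigma^s(j)}}\}_s$ and comparing the coefficient of $z^{-k}w^{-l}$, relation~(8) becomes a family of identities $R(k,l)=0$, $(k,l)\in\mathbb Z^2$, where each $R(k,l)$ is a fixed $\mathbb C(q)$-linear combination — with coefficients depending on $i,j$ but not on $(k,l)$ — of the products $x_i^+(m)x_j^+(m')$ and $x_j^+(m')x_i^+(m)$ with $m+m'=k+l+r$. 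If $a_{ij}=0$, then $\sum_{s}A_{i,\sigma^s(j)}=d_ia_{ij}=0$ forces every $A_{i,\sigma^s(j)}=0$, so both sides collapse to $(z^r-w^r)\,x_i^+(z)x_j^+(w)$ and $(z^r-w^r)\,x_j^+(w)x_i^+(z)$, and the relation follows because $x_i^\pm(k)\in U_q^{(i)}$, $x_j^\pm(l)\in U_q^{(j)}$ and $[U_q^{(i)},U_q^{(j)}]=0$. If $i=j$, relation~(8) is the image under $\varphi_i$ of the corresponding $x^+x^+$ Drinfeld relation of $U_q(A_1^{(1)})$ — or of $U_q(A_2^{(2)})$ when $i=n$ in $A_{2n}^{(2)}$, via $\varphi_n$, cf.~\cite{A} — and so is already known. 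Hence we may assume $i\neq j$ and $a_{ij}\neq 0$.

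\emph{Propagation.} Two mechanisms move us between the $R(k,l)$. First, $\ad(a_i(m))$ is a derivation, and by relation~(6) it sends $x_i^+(p)x_j^+(q)$ to $\gamma^{-|m|/2}\bigl(c^{(i)}_m\,x_i^+(p+m)x_j^+(q)+c^{(ij)}_m\,x_i^+(p)x_j^+(q+m)\bigr)$, where $c^{(ij)}_m=\sum_{s=0}^{r-1}\frac{[mA_{i,\sigma^s(j)}]_i}{m}\omega^{ms}$ and $c^{(i)}_m=c^{(ii)}_m$; since the coefficients of $R(k,l)$ are $(k,l)$-independent this gives $\ad(a_i(m))R(k,l)=\gamma^{-|m|/2}\bigl(c^{(i)}_mR(k+m,l)+c^{(ij)}_mR(k,l+m)\bigr)$, and symmetrically with $i$ and $j$ interchanged. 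Second, $T_{\omega_i}$ fixes $U_q^{(j)}$ pointwise and satisfies $T_{\omega_i}^{-1}\bigl(x_i^+(p)\bigr)=o(i)\,x_i^+(p+1)$ for $p\geq0$, so $T_{\omega_i}^{-1}R(k,l)=o(i)\,R(k+1,l)$ whenever $k\geq0$, and likewise $T_{\omega_j}^{-1}R(k,l)=o(j)\,R(k,l+1)$ for $l\geq0$. Finally, the $2\times2$ matrix $\bigl(c^{(ab)}_m\bigr)_{a,b\in\{i,j\}}$ (which also occurs in relation~(3)) has, when $r\mid m$, determinant equal to a Laurent polynomial in $q$ whose value at $q=1$ is $d_id_j\bigl(4-a_{ij}a_{ji}\bigr)$, which is positive since $a_{ij}a_{ji}<4$ for a finite-type rank-two block; hence $\det\bigl(c^{(ab)}_m\bigr)\neq0$ in $\mathbb C(q)$ for every multiple $m$ of $r$.

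Granting the base case $R(0,0)=0$, the proof is then completed as follows. Iterating $T_{\omega_i}^{-1}$ gives $R(k,0)=0$ for all $k\geq0$, and iterating $T_{\omega_j}^{-1}$ gives $R(k,l)=0$ for all $k,l\geq0$. Now fix $k_0\in\mathbb Z$ and pick a multiple $m$ of $r$ with $k_0+m\geq0$: applying $\ad(a_i(-m))$ and $\ad(a_j(-m))$ to $R(k_0+m,l)$ (which vanishes for $l\geq0$) yields a $2\times2$ linear system in the unknowns $R(k_0,l)$ and $R(k_0+m,l-m)$ whose coefficient matrix is $\bigl(c^{(ab)}_{-m}\bigr)_{a,b}$ with nonzero determinant, so $R(k_0,l)=0$ for all $k_0\in\mathbb Z$ and all $l\geq0$. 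One further application of the same device, now lowering the $l$-index, gives $R(k_0,l_0)=0$ for all $(k_0,l_0)\in\mathbb Z^2$, which is~(8).

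\emph{The base case} $R(0,0)=0$ is an identity in $U_q^+$ between $E_i=x_i^+(0)$, $E_j=x_j^+(0)$ and the first braid translates $\hat{T}_{\omega_i}^{-a}(E_i)=x_i^+(a)$, $\hat{T}_{\omega_j}^{-a}(E_j)=x_j^+(a)$, $1\leq a\leq r$; I expect this to be the main obstacle and the only step needing a case analysis. Writing $\hat{T}_{\omega_i}^{-1}(E_i)=o(i)T_{\omega_i}^{-1}T_iT_i^{-1}(E_i)$, using the identity $T_{\omega_i}T_i^{-1}(E_j)=T_{\omega_j}^{-p_ia_{ij}/p_j}T_i(E_j)$ and the explicit action of $T_i^{\pm1}$ on the Chevalley generators, one rewrites $R(0,0)$ as an identity in $E_i,E_j$ and their images under $T_i,T_{\omega_i},T_{\omega_j}$, which then follows from the quantum Serre relations together with the rank-two commutation relations of Section~3 — the two cases being $a_{ij}a_{ji}=1$ and $a_{ij}=-r,\ a_{ji}=-1$ (the node $i=n$ of $A_{2n}^{(2)}$ having been disposed of via $\varphi_n$). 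The delicate point is to match, in each case, the powers of $\omega$ and the $q^{A_{i,\sigma^s(j)}}$-weights produced by the braid operators with those prescribed by~(8); once this is done, the propagation above finishes the proof for all modes.
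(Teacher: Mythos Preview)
Your strategy is correct and would succeed, but it is considerably more elaborate than the paper's route for Case~(b) ($i\neq j$, $a_{ij}\neq 0$). Where you first isolate a base identity $R(0,0)=0$ (left as a sketched case analysis), then propagate to the first quadrant by $T_{\omega_i}^{-1},T_{\omega_j}^{-1}$, and finally extend to all of $\mathbb Z^2$ via the derivation $\ad a_i(m)$ together with the nonvanishing of the $2\times2$ determinant $(c^{(ab)}_m)$, the paper collapses all of this into a single observation. Starting from the $q$-commutator $E_{ij}=-E_iE_j+q_i^{A_{ij}}E_jE_i$ and applying $T_{\omega_i}^{-k}T_{\omega_j}^{-l}$ directly, one obtains (using that $T_{\omega_i}$ and $T_{\omega_j}$ commute and each fixes the other's Chevalley generator, together with Beck's computation of $T_{\omega_i}^{-1}(E_{ij})$) the component identity
\[
-T_{\omega_i}^{-k}(E_i)T_{\omega_j}^{-l}(E_j)+q_i^{A_{ij}}T_{\omega_j}^{-l}(E_j)T_{\omega_i}^{-k}(E_i)
=q_i^{A_{ij}}T_{\omega_i}^{-(k-p_j)}(E_i)T_{\omega_j}^{-(l+p_i)}(E_j)
- T_{\omega_j}^{-(l+p_i)}(E_j)T_{\omega_i}^{-(k-p_j)}(E_i)
\]
for all $k,l$ at once (Lemma~4.6). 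This single degree-one relation already implies the full relation~(8): the higher-degree polynomial factors $\prod_s(z-\omega^sq^{A_{i,\sigma^s(j)}}w)$ either factor through this relation (after cancelling the common factors coming from $A_{i,\sigma^s(j)}=0$) or collapse to $z^r-q^{-r}w^r$ when $\sigma(j)=j$, matching the $p_j$-shift. Thus the paper's ``base case'' and ``propagation'' are one and the same step.

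What your approach buys is modularity: once relations~(3)--(6) are in hand, your $\ad a_i(m)$ argument with the determinant $d_id_j(4-a_{ij}a_{ji})\neq 0$ is case-free and gives a clean mechanism for reaching negative modes, a point on which the paper is terse (it simply cites Beck). What the paper buys is economy and a genuine proof of the step you only sketch: the identity you call $R(0,0)=0$ drops out immediately from Beck's braid computation, with no recourse to Serre relations or a rank-two case analysis.
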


\, The relation holds if $A_{ij}=0$, so we only consider the case of $A_{ij}\neq 0$.  The proof is divided into several cases.\\

{\bf Case (a): $i=j$.} The required relations are generating functions of the following component relations:

$$x_i^{\pm}(k+p_i)x_i^{\pm}(l)-q_i^{\pm 2}x_i^{\pm}(l)x_i^{\pm}(k+p_i)=
q_i^{\pm 2}x_i^{\pm}(k)x_i^{\pm}(l+p_i)-x_i^{\pm}(l+p_i)x_i^{\pm}(k).$$

On the other hand the following relations hold in relation in $U_q^{(i)}$
similar as in the untwisted cases.
%{\color{red} need to give some reason}
\begin{eqnarray*}
&&\,  T_{\omega_i}^{-k}(E_i)E_i-q_i^2E_i T_{\omega_i}^{-k}(E_i)=q_i^{2}T_{\omega_i}^{-k+1}(E_i)T_{\omega_i}(E_i)-
T_{\omega_i}(E_i)T_{\omega_i}^{-k+1}(E_i).
\end{eqnarray*}
Hence the required relation follows by recalling the definition of $x_i^{+}(k)$.\\

{\bf Case (b): $i\neq j$  such that $A_{ij}\neq 0$.}

First for $i, j$ we define
$$E_{ij}=-E_iE_j+q_i^{A_{ij}}E_jE_i.$$

\begin{lemm}\, For $i\neq j\in I$ such that $A_{ij}\leqslant 0$, and $k\in \mathbb{Z}$,
  \begin{eqnarray*}
&& -T_{\omega_i}^{-k}(E_i)T_{\omega_j}^{-l}(E_j)+q_i^{A_{ij}}T_{\omega_j}^{-l}(E_j)T_{\omega_i}^{-k}(E_i)\\
&=&q_i^{A_{ij}}T_{\omega_i}^{-(k-p_j)}(E_i)T_{\omega_j}^{-(l+p_i)}(E_j)
- T_{\omega_j}^{-(l+p_i)}(E_j)T_{\omega_i}^{-(k-p_j)}(E_i).
\end{eqnarray*}
\end{lemm}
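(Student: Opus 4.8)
The plan is to reduce the identity to an already-established relation between $U_q^{(i)}$ and $U_q^{(j)}$, using Lemma~3.13(3), which says $T_{\omega_i}T_i^{-1}(E_j)=T_{\omega_j}^{-p_ia_{ij}/p_j}T_i(E_j)$, together with the commutation data of Lemma~3.6 (namely $T_{\omega_i}T_{\omega_j}=T_{\omega_j}T_{\omega_i}$, $T_{\omega_j}(E_i)=E_i$, and $T_{\omega_i}T_i=T_iT_{\omega_j}$ for $i\neq j$). First I would rewrite the claimed equality in the equivalent ``$q$-bracket'' form
\begin{equation*}
\bigl[\,T_{\omega_i}^{-k}(E_i),\,T_{\omega_j}^{-l}(E_j)\,\bigr]_{q_i^{A_{ij}}}
= T_{\omega_i}^{p_j}T_{\omega_j}^{-p_i}\Bigl(\bigl[\,T_{\omega_i}^{-k}(E_i),\,T_{\omega_j}^{-l}(E_j)\,\bigr]_{q_i^{A_{ij}}}\Bigr),
\end{equation*}
so that the statement becomes: the element $E_{ij}(k,l):=-T_{\omega_i}^{-k}(E_i)T_{\omega_j}^{-l}(E_j)+q_i^{A_{ij}}T_{\omega_j}^{-l}(E_j)T_{\omega_i}^{-k}(E_i)$ is an eigenvector, or rather satisfies a shift relation, under the commuting operators $T_{\omega_i}^{-1}$ and $T_{\omega_j}^{-1}$. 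Since $T_{\omega_i}$ and $T_{\omega_j}$ commute and each is an algebra automorphism, it suffices to prove the single base case $k=l=0$:
\begin{equation*}
-E_iE_j+q_i^{A_{ij}}E_jE_i = q_i^{A_{ij}}T_{\omega_i}^{p_j}(E_i)\,T_{\omega_j}^{-p_i}(E_j) - T_{\omega_j}^{-p_i}(E_j)\,T_{\omega_i}^{p_j}(E_i),
\end{equation*}
and then apply $T_{\omega_i}^{-k}T_{\omega_j}^{-l}$ to both sides, using $T_{\omega_j}(E_i)=E_i$ to absorb the extra shift on the $E_i$ factor on the right-hand side (so that $T_{\omega_i}^{-k}T_{\omega_j}^{-l}T_{\omega_i}^{p_j}(E_i)=T_{\omega_i}^{-(k-p_j)}(E_i)$) and, symmetrically, $T_{\omega_i}(E_j)=E_j$ (Lemma~3.6(2) applied with the roles of $i,j$ reversed) to handle the $E_j$ factor.

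Thus the real content is the base case, which is an identity purely inside the rank-two situation. I would establish it by expressing $T_{\omega_j}^{-p_i}(E_j)$ via $T_{\omega_i}T_i^{-1}(E_j)$: from Lemma~3.13(3), $T_{\omega_j}^{-p_ia_{ij}/p_j}T_i(E_j)=T_{\omega_i}T_i^{-1}(E_j)$, and since $A_{ij}=d_ia_{ij}=\sum_{s=0}^{r-1}A_{i,\sigma^s(j)}$ and $p_ia_{ij}/p_j$ is exactly the exponent appearing there, one can solve for $T_{\omega_j}^{-p_i}(E_j)$ in terms of $T_i$-images of $E_j$; similarly $T_{\omega_i}^{p_j}(E_i)$ is controlled via Lemma~3.7(2)–(3) and $T_{\omega_i}(K_i)=\gamma K_i$. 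After these substitutions the desired equality collapses to a known relation in $U_q^{(i)}\cup U_q^{(j)}$ — concretely the ``cross'' $q$-Serre-type relation between the root vectors attached to nodes $i$ and $j$, which in the isomorphic picture under $\varphi_i,\varphi_j$ is a relation in $U_q(\widehat{sl}_2)$-pairs that was recorded in the untwisted analogue (Beck \cite{B}, Damiani \cite{Da2}) and whose validity in the twisted case was noted in the remark preceding Lemma~3.14. For the exceptional node behaviour in $A_{2n}^{(2)}$ I would invoke Proposition~3.16 to pass to the $U_q(A_2^{(2)})$ model and check the same identity there.

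The main obstacle I anticipate is the bookkeeping in the base case: matching the braid-group shifts on the two sides requires that $T_{\omega_j}$ fix $E_i$ \emph{and} that the exponent $p_i a_{ij}/p_j$ in Lemma~3.13(3) be an integer and agree with $p_i$ after accounting for the three possibilities $(\sigma(i)=i)$, $(\sigma(i)\neq i,\ \sigma(j)=j)$, $(\sigma(i)\neq i,\ \sigma(j)\neq j)$; these are precisely the cases tabulated around Lemmas~3.12 and~3.14, so I would organize the proof by splitting along them, using $p_i=d_i=1$ whenever $\sigma(i)\neq i$. Once the case analysis is set up, each instance reduces to a finite computation with $T_i$, $T_{\omega_i}$, and the defining relations $(R3)$–$(R6)$; the $\Phi$-twisted (sign $-$) version then follows formally by applying $\Phi$ together with the fact that $T_{\omega_i}$ intertwines $\Phi$ appropriately, and the extension of the indices from $I$ to $\{1,\dots,N\}$ via $x_i^\pm(k)=\omega^{-k}x_{\sigma(i)}^\pm(k)$ is immediate from relation~(1).
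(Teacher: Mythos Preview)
Your overall strategy coincides with the paper's: reduce the general $(k,l)$ identity to a base case by applying the commuting automorphisms $T_{\omega_i}^{-k}T_{\omega_j}^{-l}$, using $T_{\omega_j}(E_i)=E_i$ and $T_{\omega_i}(E_j)=E_j$, and then defer the base relation to Beck's rank-two computation. The paper reduces only $l$ to $0$ and cites Beck for the remaining identity (noting $d_jp_i=d_ip_j$); you go one step further and reduce both indices to $0$, which is an equally valid and essentially identical argument.

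Two corrections, neither fatal. First, your displayed ``equivalent $q$-bracket form'' is not actually equivalent to the lemma: applying $T_{\omega_i}^{p_j}T_{\omega_j}^{-p_i}$ to $[T_{\omega_i}^{-k}(E_i),T_{\omega_j}^{-l}(E_j)]_{q_i^{A_{ij}}}$ yields $[T_{\omega_i}^{-(k-p_j)}(E_i),T_{\omega_j}^{-(l+p_i)}(E_j)]_{q_i^{A_{ij}}}$, whereas the right-hand side of the lemma is the bracket with the two arguments \emph{swapped}, namely $[T_{\omega_j}^{-(l+p_i)}(E_j),T_{\omega_i}^{-(k-p_j)}(E_i)]_{q_i^{A_{ij}}}$. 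Fortunately you do not use this reformulation; your actual reduction (``prove the $k=l=0$ case, then apply $T_{\omega_i}^{-k}T_{\omega_j}^{-l}$ to both sides'') is correct and self-contained. Second, your lemma numbers are shifted: the identity $T_{\omega_i}T_i^{-1}(E_j)=T_{\omega_j}^{-p_ia_{ij}/p_j}T_i(E_j)$ is item~(3) of the lemma listed immediately before Lemma~3.3 (not ``3.13''), and the commutation facts $T_{\omega_i}T_{\omega_j}=T_{\omega_j}T_{\omega_i}$, $T_{\omega_j}(E_i)=E_i$ are in the paper's Lemma~3.7 (not ``3.6''). Your sketch of the base case is more elaborate than the paper's one-line citation to Beck, but the paper does not supply those details either, so nothing is lost.
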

\begin{proof}
Applying $T_{\omega_i}^{-k}$ and $T_{\omega_j}^{-l}$ to $E_{ij}$ and invoking
Lemma 3.7, we can pull out the action of $T_{\omega_j}$ to arrive at
$$
-T_{\omega_i}^{-k}(E_i)E_j+q_i^{A_{ij}}E_jT_{\omega_i}^{-k}(E_i)
=q_i^{A_{ij}}T_{\omega_i}^{-(k-p_j)}(E_i)T_{\omega_j}^{-p_i}(E_j)
- T_{\omega_j}^{-p_i}(E_j)T_{\omega_i}^{-(k-p_j)}(E_i).
$$
which was essentially proved by Beck \cite{B} since $d_jp_i=d_ip_j$.
\end{proof}

The following well-known fact will be used to prove the remaining relations.
\begin{lemm}\label{lemma1}\, If $A\in U_q(\hat{\frak{g}}^{\sigma})^+$ and $[\,A,\, F_k\,]=0 \, \forall k\in \hat{I}$, then $A=0$.
\end{lemm}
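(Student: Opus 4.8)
The plan is to deduce this from Lusztig's non-degeneracy result Lemma \ref{L:comm-br}(1), by relating the bracket $[A,F_i]$ to the twisted derivations $r_i$ introduced above.

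First I would reduce to the homogeneous case. Writing $A=\sum_{\beta\in Q^+}A_\beta$ with $A_\beta\in U_{q,\beta}^+$ for the root-space grading of $U_q(\widehat{\frak{g}}^{\sigma})$, and using that $F_i$ has degree $-\alpha_i$, the bracket $[A,F_i]=\sum_\beta [A_\beta,F_i]$ splits into summands of pairwise distinct degrees $\beta-\alpha_i$; hence $[A,F_i]=0$ for all $i\in\hat I$ forces $[A_\beta,F_i]=0$ for every $\beta$ and every $i\in\hat I$. So we may assume $A=A_\beta$ is homogeneous, and since a nonzero scalar is not what the lemma concerns we take $\beta\in Q^+\setminus\{0\}$.

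Next I would bring in the standard commutator identity of the Drinfeld--Jimbo algebra: for $A\in U_{q,\beta}^+$ one has
\begin{equation*}
A F_i-F_i A=\frac{r_i(A)\,K_i-K_i^{-1}\,{}_i r(A)}{q_i-q_i^{-1}},
\end{equation*}
where ${}_i r$ denotes the second twisted derivation of $U_q^+$, determined by ${}_i r(E_j)=\delta_{ij}$ and ${}_i r(xy)={}_i r(x)\,y+q^{(|x|,\alpha_i)}\,x\,{}_i r(y)$ (see \cite{L1}); the precise normalization of the $K_i^{\pm1}$ and of the two derivations is immaterial for what follows. Both $r_i(A)$ and ${}_i r(A)$ lie in $U_{q,\beta-\alpha_i}^+$, and by the triangular decomposition $U_q(\widehat{\frak{g}}^{\sigma})\cong U_q^-\otimes U_q^0\otimes U_q^+$ together with the linear independence of $K_i$ and $K_i^{-1}$ in $U_q^0$, the vanishing $A F_i-F_i A=0$ forces $r_i(A)=0$ (and likewise ${}_i r(A)=0$). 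As this holds for every $i\in\hat I$, Lemma \ref{L:comm-br}(1) gives $A=0$, which is the assertion.

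The only genuine work is to establish the commutator identity above; this is classical and follows from the relations $(R1)$--$(R4)$ by induction on the height of $\beta$, so I do not anticipate a real obstacle. If one wished to avoid introducing ${}_i r$ altogether, an alternative route is the non-degeneracy of Lusztig's bilinear pairing between $U_q^-$ and $U_q^+$: the hypothesis $[A,F_i]=0$ for all $i\in\hat I$ translates into $A$ being orthogonal to a generating set of $U_q^-$, hence to all of $U_q^-$, forcing $A=0$.
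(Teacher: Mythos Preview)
The paper does not actually prove this lemma: it introduces it with the phrase ``The following well-known fact will be used to prove the remaining relations'' and then simply applies it. So there is no paper proof to compare against.

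Your argument is correct and is the standard route back to Lusztig: reduce to homogeneous pieces via the $Q^+$-grading, use the identity
\[
AF_i-F_iA=\frac{r_i(A)\,K_i-K_i^{-1}\,{}_i r(A)}{q_i-q_i^{-1}}
\]
from \cite{L1}, separate the two summands by the triangular decomposition, and then invoke Lemma~\ref{L:comm-br}(1). One small caveat worth making explicit: as literally stated, both Lemma~\ref{L:comm-br}(1) and the present lemma fail for scalars (take $A=1$); the correct conclusion is $A\in\mathbb C(q)\cdot 1$, which is all that is needed in the applications since the elements $X,Y,Z$ to which the lemma is applied in the paper lie in strictly positive degree. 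You already flagged this with ``since a nonzero scalar is not what the lemma concerns'', so your argument is in order.
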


We concentrate mainly on the relation $(9)$ and divide it into
four cases. The Serre relation in the case of
$P_{ij}^{\pm}(z_1,z_2)=1$ can be derived from that of non twisted
case.  %Therefore the proof is divided into several different cases.
Moreover, the proof will explain why the Serre relation with the
lower power works by the action of diagram automorphism $\sigma$ in
the twisted case.

\begin{prop}\label{prop1}\, For $A_{ij}=-1$ and $\sigma(i)\neq j$, we have:
\begin{eqnarray*}
&&Sym_{z_1,z_2}P_{ij}^{\pm}(z_1,z_2)\sum\limits_{s=0}^{2}(-1)^s
x_i^{\pm}(z_1)\cdots
x_i^{\pm}(z_s)x_j^{\pm}(w)x_i^{\pm}(z_{s+1})\cdots x_j^{\pm}(z_2)=0
\end{eqnarray*}
\end{prop}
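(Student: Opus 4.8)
The plan is to reduce the degree-two Serre relation for the Drinfeld generators to the ordinary quantum Serre relation among the Chevalley generators $E_i, E_j$ (and $E_0$-type vectors), using the same strategy that works in the untwisted case but keeping track of the automorphism $\sigma$. Since $P_{ij}^{\pm}(z_1,z_2)$ is a nontrivial polynomial precisely when $\sigma(i)=i$ or $A_{i,\sigma(i)}=-1$, but here we are in the case $A_{ij}=-1$ with $\sigma(i)\ne j$, one checks from the case list that the relevant $P_{ij}^{\pm}$ is either $1$ (when $\sigma(i)\ne i$) or the factor $\frac{z^rq^{\pm2r}-w^r}{zq^{\pm2}-w}$ (when $\sigma(i)=i$, $\sigma(j)=j$); in the first subcase the relation is literally the untwisted Serre relation transported by $\hat T_{\omega_i}$, so the content is in the $\sigma(i)=i$ subcase. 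First I would expand the generating-function identity into its homogeneous components in the $z_\ell, w$ variables, obtaining a family of identities of the form $\sum_{s=0}^{2}(-1)^s[\,\cdots\,]=0$ among monomials $\hat T_{\omega_i}^{-k_1}(E_i)\,\hat T_{\omega_i}^{-k_2}(E_i)\,\hat T_{\omega_j}^{-l}(E_j)$ (and permutations), with coefficients coming from $P_{ij}^\pm$.

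The key mechanism, as in Beck, is that these components are obtained by repeatedly applying the commuting operators $T_{\omega_i}$ (and $T_{\omega_j}$) to a single ``seed'' relation living in low degree. So the main step is to establish that seed: namely that for $A_{ij}=-1$, $\sigma(i)\ne j$, the expression $E_i^{(2)}E_j - E_iE_jE_i + E_jE_i^{(2)}$ vanishes (this is exactly $(R5)$ with $1-a_{ij}=2$), together with the ``twisted'' version where one $E_i$ is replaced by $T_{\omega_i}^{-1}(E_i)$ or $E_j$ is replaced by $T_{\omega_j}^{-1}(E_j)$. To pass from the seed to all components, I would apply $\hat T_{\omega_i}^{-k}$ and $\hat T_{\omega_j}^{-l}$ and use Lemma 3.7(3) (the identity $T_{\omega_i}T_i^{-1}(E_j)=T_{\omega_j}^{-p_ia_{ij}/p_j}T_i(E_j)$) and the commutation $T_{\omega_i}T_{\omega_j}=T_{\omega_j}T_{\omega_i}$ from Lemma 3.14 to move all the $T_{\omega_j}$-actions onto $E_j$ and all the $T_{\omega_i}$-actions onto the $E_i$'s; the previous lemma (the analogue of Case (b) for $E_{ij}$) handles the commutator corrections that appear, and the polynomial factor $P_{ij}^\pm$ is precisely what absorbs the extra degree shifts caused by $\sigma(i)=i$ (so $p_i=r$) — this is what the remark before the proposition, that ``the Serre relation with the lower power works by the action of $\sigma$,'' is pointing at.

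The actual verification of the seed relations for $\sigma(i)=i$ is where the $q$-bracket computations of \cite{J1, ZJ} enter: one must show that applying $\sigma$ to the simply-laced Serre relation among the $E_a$ ($a\in\{1,\dots,N\}$) and summing over the $\sigma$-orbit produces exactly the claimed twisted relation with $P_{ij}^\pm$ inserted, using $d_ia_{ij}=\sum_{s=0}^{r-1}A_{i,\sigma^s(j)}$. Finally, to conclude that the identity holds in all of $U_q(\widehat{\frak g}^\sigma)$ rather than just ``morally,'' I would apply Lemma \ref{lemma1}: the left-hand side, call it $A$, lies in $U_q^+$ (up to the $K$-factors built into $x_i^-$), so it suffices to check $[A,F_k]=0$ for every $k\in\hat I$, which follows from $(R4)$ together with $[T_{\omega_i}T_i^{-1}(E_i),F_i]=0$ (Lemma 3.9(1)) and the fact that $U_q^{(i)}$ commutes with $F_k$ for $k\notin\{i,\sigma(i)\}$. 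I expect the main obstacle to be the $\sigma(i)=i$ subcase: bookkeeping the shift $p_i=r$ through all the $T_{\omega}$-conjugations so that the coefficients line up with $P_{ij}^\pm(z_1,z_2)=\frac{z^rq^{\pm2r}-w^r}{zq^{\pm2}-w}$ rather than with the naive untwisted factor — in other words, proving that the lower-degree twisted Serre relation is genuinely equivalent to the image of the higher-degree simply-laced one under $\sigma$, which is the technical heart of both this proposition and the next three cases.
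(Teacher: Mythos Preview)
Your case analysis is inverted. From the table defining $P_{ij}^{\pm}$, the polynomial is trivial ($P_{ij}^{\pm}=1$) precisely when $\sigma(i)=i$, \emph{not} when $\sigma(i)\neq i$; the nontrivial factor $\frac{z^rq^{\pm2r}-w^r}{zq^{\pm2}-w}$ arises in the case $A_{i,\sigma(i)}=0$ (forcing $\sigma(i)\neq i$, hence $p_i=1$) with $\sigma(j)=j$ (so $p_j=r$). Thus your claim that ``the content is in the $\sigma(i)=i$ subcase'' and that the polynomial is absorbing shifts caused by ``$p_i=r$'' is exactly backwards: $\sigma(i)=i$ is the easy, untwisted-looking case (Case~(i) in the paper), while the genuinely new twisted work is Case~(ii), where $\sigma(i)\neq i$ and $\sigma(j)=j$.

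This inversion undermines your strategy. For Case~(i) the paper does argue by translation and induction in a spirit close to yours: one reduces to a seed in $E_i,\hat T_{\omega_i}^{-t}(E_i),E_j$ and inducts on $t$ using $\hat T_{\omega_i}^{-1}(E_i)=\frac{\gamma^{1/2}}{[2]_i}[a_i(1),E_i]$. But for Case~(ii) the paper does \emph{not} obtain the seed by ``$\sigma$-averaging a simply-laced Serre relation'' --- there is no well-defined lift of $\sigma$ to the Drinfeld generators that would make that step precise. Instead, after pulling out $\hat T_{\omega_i}^{-k}\hat T_{\omega_j}^{-l}$, one is left with a single element $X\in U_q^+$ (for $r=2$, a specific combination of $E_j,E_i,\hat T_{\omega_i}^{-1}(E_i)$ with coefficients dictated by $P_{ij}^{\pm}$; for $r=3$ one also needs $\hat T_{\omega_i}^{-2}(E_i)$), and $X$ is annihilated by directly computing $[X,F_k]=0$ for every $k\in\hat I$ using relations (5)--(7). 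The criterion you invoke only at the end is therefore the \emph{main} engine in the hard cases, not a final formality; your $\sigma$-averaging proposal would have to be replaced by this explicit commutator calculation.
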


\begin{proof}\, This is proved case by case.

{\bf Case (i): $A_{ij}=-1$ and $\sigma(i)=i$.}

In this case  $P_{ij}^{\pm}(z_1,z_2)=1$ and $d_{ij}=r$, it is also
clear that $d_i=r$, then the relation is exactly like the Serre relation
in the non-twisted case. For completeness we provide a proof for this Serre
relation. i.e. we will show that for any integers $k_1, k_2, l$
$$Sym_{k_1,k_2}\Big(x_j^{+}(l)x_i^{+}(k_1)x_i^{+}(k_2)-[2]_ix_i^{+}(k_1)x_j^{+}(l)x_i^{+}(k_2) +x_i^{+}(k_1)x_i^{+}(k_2)x_j^{+}(l)\Big)=0.$$
Note that $x_j^+(l)=T_{\omega_j}^{-l}E_j$. Lemma 3.7 says that one can pull out any factor
of $T_{\omega_j}$ or common factors of $T_{\omega_i}$ from the left-hand side (LHS).
This means that for any natural number $t$ the following relation is equivalent to the Serre relation.
  $$\Big(E_j E_i\hat{T}_{\omega_i}^{-t}(E_i)-[2]_i E_i E_j\hat{T}_{\omega_i}^{-t}(E_i) +E_i\hat{T}_{\omega_i}^{-t}(E_i)E_j\Big)+\Big(E_i\leftrightarrow \hat{T}_{\omega_i}^{-t}(E_i)\Big)=0.$$

We prove this last relation by induction on $t$. First note that when $t=0$, the relation is essentially
the relation $(R5)$.
We assume that the above relation holds for $\leq t-1$. The remark above further says once we have made
the inductive assumption then all Serre relations with
$|k_1-k_2|\leq t-1$ and arbitrary $l$ are also assumed to be true.
Using relation $(6)$ in Theorem
\ref{t1} yields
$$\hat{T}_{\omega_i}^{-1}(E_i)=\frac{\gamma^{\frac{1}{2}}}{[2]_i}[\,a_i(1),\,E_i\,].  \qquad\quad (*)$$
Plugging this into LHS of the Serre relation we get that
\begin{eqnarray*}
&&\frac{\gamma^{\frac{1}{2}}}{[2]_i}\Big(E_j E_i\hat{T}_{\omega_i}^{-t+1}\big([\,a_i(1),\,E_i\,]\big)-[2]_i E_i E_j\hat{T}_{\omega_i}^{-t+1}\big([\,a_i(1),\,E_i\,]\big) \\ &&\hspace{1.5cm}+E_i\hat{T}_{\omega_i}^{-t+1}\big([\,a_i(1),\,E_i\,]\big)E_j\Big)+\Big(E_i\leftrightarrow \hat{T}_{\omega_i}^{-t+1}(E_i)\Big)\\
&=&\frac{\gamma^{\frac{1}{2}}}{[2]_i}\Big(E_j E_i[\,a_i(1),\,\hat{T}_{\omega_i}^{-t+1}(E_i)\,]-[2]_i E_i E_j[\,a_i(1),\,\hat{T}_{\omega_i}^{-t+1}(E_i)\,] \\ &&\hspace{1.5cm}+E_i[\,a_i(1),\,\hat{T}_{\omega_i}^{-t+1}(E_i)\,]E_j\Big)+\Big(E_i\leftrightarrow \hat{T}_{\omega_i}^{-t+1}(E_i)\Big).
\end{eqnarray*}
Then we repeatedly use $(*)$ to move $a_i(1)$ to the extreme left
to get an expression of the form
$$
a_i(1)\Big(E_j E_i\hat{T}_{\omega_i}^{-t+1}(E_i)-[2]_i E_i E_j\hat{T}_{\omega_i}^{-t+1}(E_i) +E_i\hat{T}_{\omega_i}^{-t+1}(E_i)E_j\Big)+\cdots
$$
where $\cdots$ only involves with LHS of Serre relations with $t-2$. So the whole
expression is zero by the inductive assumption.
Thus we have finished the proof of the Serre relation $(9)$ in this case.

{\bf Case (ii): $A_{ij}=-1$ and $A_{i,\sigma(i)}=0, \sigma(j)=j$.}

 For $r=2$, without loss generality
 we take $A_{2n-1}^{(2)}$ for an example, the other cases are treated similarly.
 In this case we only need to consider the situation when $i=n-1$ and $j=n$,
 then  $P_{ij}^{\pm}(z_1,z_2)=z_1q^{\pm 2}+z_2$ and $d_{ij}=2$. So we need to prove the following relations.
\begin{eqnarray*}
&&q^2\Big(x_j^+(l)x_i^+(k+1)x_i^+(k)-[2]_{q^2}x_i^+(k+1)x_j^+(l)x_i^+(k)+x_i^+(k+1)x_i^+1(k)x_j^+(l)\Big)\\
&&+\Big(x_j^+(l)x_i^+(k)x_i^+(k+1)-[2]_{q^2}x_i^+(k)x_j^+(l)x_i^+(k+1)+x_i^+(k)x_i^+(k+1)x_j^+(l)\Big)\\
&&=0.
\end{eqnarray*}

Using the definition of $x_i^+(k)$ and collecting the action of $\hat{T}_{\omega_i}^{-k}\hat{T}_{\omega_j}^{-l}$,
 we are left to show that

\begin{eqnarray*}
&X=&q^2\Big(E_j \hat{T}_{\omega_i}^{-1}(E_i)E_i-[2]_{q^2} \hat{T}_{\omega_i}^{-1}(E_i)E_j E_i+ \hat{T}_{\omega_i}^{-1}(E_i)E_i E_j \Big)\\
&&+\Big(E_j E_i\hat{T}_{\omega_i}^{-1}(E_i)-[2]_{q^2}E_i E_j \hat{T}_{\omega_i}^{-1}(E_i)+ E_i\hat{T}_{\omega_i}^{-1}(E_i) E_j\Big)=0.
\end{eqnarray*}

To see this we use Lemma $\ref{lemma1}$
and compute all the commutators $[X, F_k]=0$ for $k\in\hat{I}$. First
we consider the case of $k=0$.
Note that $[E_i,\, F_0]=0$ whenever $i\neq 0$.
On the other hand we claim that $[\,\hat{T}_{\omega_i}^{-1}(E_i),\,F_0\,]=0$. To see this we check
that $r_0(\hat{T}_{\omega_i}^{-1}(E_i))=0$ by using the twisted derivation $r_0$.
By Lemma \ref{L:comm-br} the last equation is equivalent to $T_0^{-1}\hat{T}_{\omega_i}^{-1}(E_i)\in U_q^{+}$, which
can be easily seen as $s_0(\delta+\alpha_i)\in \Delta_+$ when $\mathfrak g_0\neq \mathfrak{sl}_2$. Consequently it implies that $[\,X,\, F_0\,]=0$.

Since $X$ is expressed by $E_i$ and $E_j$,  $[X, F_k]=0$ for any $k\neq i,j$.
Using Drinfeld relation $(7)$, we compute that
\begin{eqnarray*}
& &[\,X,\,F_j\,]\\
&=&q^2\Big(\underbrace{[\,E_j,\,F_j\,]}\hat{T}_{\omega_i}^{-1}(E_i)E_i-[2]_{q^2} \hat{T}_{\omega_i}^{-1}(E_i)\underbrace{[\,E_j,\,F_j\,]} E_i+ \hat{T}_{\omega_i}^{-1}(E_i)E_i \underbrace{[\,E_j,\,F_j\,]} \Big)\\
&&+\Big(\underbrace{[\,E_j,F_j\,]} E_i\hat{T}_{\omega_i}^{-1}(E_i)-[2]_{q^2}E_i\underbrace{[\,E_j,\,F_j\,]} \hat{T}_{\omega_i}^{-1}(E_i)+ E_i\hat{T}_{\omega_i}^{-1}(E_i)\underbrace{[\,E_j,\,F_j\,]}\Big)\\
&=&\frac{q^2}{q_i-q_i^{-1}}\Big(\underbrace{\big(1-[2]_{q^2}q^{-2}+q^{-4}\big)}K_i
+\underbrace{\big(1-[2]_{q^2}q^{2}+q^{4}\big)}K_i^{-1}\Big)\hat{T}_{\omega_i}^{-1}(E_i)E_i\\
&&+\frac{1}{q_i-q_i^{-1}}\Big(\underbrace{\big(1-[2]_{q^2}q^{-2}+q^{-4}\big)}K_i
+\underbrace{\big(1-[2]_{q^2}q^{2}+q^{4}\big)}K_i^{-1}\Big)E_i\hat{T}_{\omega_i}^{-1}(E_i)\\
&=&0,
\end{eqnarray*}
where  Drinfeld relation $(5)$ has been used.

\begin{eqnarray*}
& &[\,X,\,F_i\,]\\
&=&q^2\Big(E_j\underbrace{[\,\hat{T}_{\omega_i}^{-1}(E_i),\,F_i\,]}E_i-[2]_{q^2} \underbrace{[\,\hat{T}_{\omega_i}^{-1}(E_i),\,F_i\,]}E_j E_i+ \underbrace{[\,\hat{T}_{\omega_i}^{-1}(E_i),\,F_i\,]} E_i E_j\\
&&+E_j\hat{T}_{\omega_i}^{-1}(E_i)\underbrace{[\,E_i,\,F_i\,]}-[2]_{q^2} \hat{T}_{\omega_i}^{-1}(E_i)E_j\underbrace{[\,E_i,\,F_i\,]}+ \hat{T}_{\omega_i}^{-1}(E_i)\underbrace{[\,E_i,\,F_i\,]} E_j \Big)\\
&&+\Big(E_j\underbrace{[\,E_i,F_i\,]}\hat{T}_{\omega_i}^{-1}(E_i)-[2]_{q^2}\underbrace{[\,E_i,\,F_i\,]}E_j \hat{T}_{\omega_i}^{-1}(E_i)+ \underbrace{[\,E_i,\,F_i\,]}\hat{T}_{\omega_i}^{-1}(E_i)E_j\\
&&+E_j E_i\underbrace{[\,\hat{T}_{\omega_i}^{-1}(E_i),F_i\,]}-
[2]_{q^2}E_i E_j\underbrace{[\,\hat{T}_{\omega_i}^{-1}(E_i),\,F_i\,]} + E_i\underbrace{[\,\hat{T}_{\omega_i}^{-1}(E_i),\,F_i\,]}E_j\Big)
\end{eqnarray*}
where we have used the Drinfeld relation $(7)$, $(5)$ and $(6)$ for the last step. Collecting common terms, we arrive at
\begin{eqnarray*}
& &[\,X,\,F_i\,]\\
&=&\Big(\gamma^{\frac{1}{2}}\underbrace{\big(q^4-q^2[2]_{q^2}+1\big)}E_j E_i a_i(1)K_i+\big(q^4-q^2[2]_{q^2}\big)[2]E_j \hat{T}_{\omega_i}^{-1}(E_i)K_i\Big)\\
&&+\Big(\gamma^{\frac{1}{2}}\underbrace{\big(q^2-[2]_{q^2}+q^{-2}\big)}E_i E_j a_i(1)K_i
+q^2[2] \hat{T}_{\omega_i}^{-1}(E_i)E_j K_i\Big)\\
&&+\frac{1}{q_i-q_i^{-1}}\Big(\underbrace{\big(q^2-[2]_{q^2}+q^{-2}\big)}E_j \hat{T}_{\omega_i}^{-1}K_i^{-1}
+\big(2q^2-[2]_{q^2}\big)E_j \hat{T}_{\omega_i}^{-1}K_i\Big) \\
&&+\frac{1}{q_i-q_i^{-1}}\Big(\underbrace{\big(1-[2]_{q^2}q^2+q^{4}\big)}\hat{T}_{\omega_i}^{-1}E_j K_i^{-1}
+\big(2-[2]_{q^2}q^2\big) \hat{T}_{\omega_i}^{-1}E_j K_i\Big)\\
&=&\big([2]-[2]\big)E_j \hat{T}_{\omega_i}^{-1}(E_i)K_i+\big(q^2[2]-q^2[2]\big) \hat{T}_{\omega_i}^{-1}(E_i)E_j K_i\\
&=&0.
\end{eqnarray*}

For $r=3$, the exceptional type $D_4^{(3)}$ should also be checked. In this
case we know that $P_{ij}^{\pm}(z_1,z_2)=z_1^2q^{\pm 4}+z_1z_2q^{\pm
2}+z_2^2$ and $d_{ij}=3$. More specifically we have
in this case $i=2$, $j=1$ and
$d_i=2, d_j=1$, and the relation is
reduced to the following equivalent one:
\begin{eqnarray*}
&&q^4\Big(x_j^+(l)x_i^+(k+2)x_i^+(k)-[2]_{q^3}x_i^+(k+2)x_j^+(l)x_i^+(k)+x_i^+(k+2)x_i^+1(k)x_j^+(l)\Big)\\
&&+q^2\Big(x_j^+(l)(x_i^+(k+1))^2-[2]_{q^3}x_i^+(k+1)x_j^+(l)x_i^+(k+1)+(x_i^+(k+1))^2x_j^+(l)\Big)\\
&&+\Big(x_j^+(l)x_i^+(k)x_i^+(k+2)-[2]_{q^3}x_i^+(k)x_j^+(l)x_i^+(k+2)+x_i^+(k)x_i^+(k+2)x_j^+(l)\Big)\\
&&=0.
\end{eqnarray*}

By definition of $x_i^+(k)$ and collecting the action of
$\hat{T}_{\omega_i}^{-k}\hat{T}_{\omega_j}^{-l}$, we can rewrite the LHS of the above
relation as follows.

\begin{eqnarray*}
&Y=&q^4\Big(E_j \hat{T}_{\omega_i}^{-2}(E_i)E_i-[2]_{q^3} \hat{T}_{\omega_i}^{-2}(E_i)E_j E_i+ \hat{T}_{\omega_i}^{-2}(E_i)E_i E_j \Big)\\
&&+q^2\Big(E_j \hat{T}_{\omega_i}^{-1}(E_i^2)-[2]_{q^3} \hat{T}_{\omega_i}^{-1}(E_i)E_j \hat{T}_{\omega_i}^{-1}(E_i)+ \hat{T}_{\omega_i}^{-1}(E_i^2)E_j \Big)\\
&&+\Big(E_j E_i\hat{T}_{\omega_i}^{-2}(E_i)-[2]_{q^2}E_i E_j \hat{T}_{\omega_i}^{-2}(E_i)+ E_i\hat{T}_{\omega_i}^{-2}(E_i) E_j\Big)
\end{eqnarray*}

Let us use Lemma 4.7 to show that $Y=0$ by checking that $[\,Y, F_k\,]=0$ for all $k\in
\hat{I}$. When $k=0$ is clear as $Y$ is only expressed in terms of $E_i$ and $E_j$
and $i, j\in I$. This also implies that $[Y, F_k]=0$ for $k\neq i$ or $j$.

%For $k$ is not equal to $i$ or $j$, it follows from $[\,E_i, F_k\,]=[\,E_j,\,F_k \,]=0$.

For $k=j$, we use Drinfeld relation (4) and the commutation relation
$[\,E_j,\,F_j\,]=\frac{K_j-K_j^{-1}}{q_j-q_j^{-1}}$ to get that
\begin{eqnarray*}
& &[\,Y,\,F_j\,]\\
&=&q^4\Big([\,E_j,\,F_j\,] \hat{T}_{\omega_i}^{-2}(E_i)E_i-[2]_{q^3} \hat{T}_{\omega_i}^{-2}(E_i)[\,E_j,\,F_j\,] E_i+ \hat{T}_{\omega_i}^{-2}(E_i)E_i [\,E_j,\,F_j\,] \Big)\\
&&+q^2\Big([\,E_j,\,F_j\,]\hat{T}_{\omega_i}^{-1}(E_i^2)-[2]_{q^3} \hat{T}_{\omega_i}^{-1}(E_i)[\,E_j,\,F_j\,] \hat{T}_{\omega_i}^{-1}(E_i)+ \hat{T}_{\omega_i}^{-1}(E_i^2)[\,E_j,\,F_j\,] \Big)\\
&&+\Big([\,E_j,\,F_j\,] E_i\hat{T}_{\omega_i}^{-2}(E_i)-[2]_{q^2}E_i [\,E_j,\,F_j\,] \hat{T}_{\omega_i}^{-2}(E_i)+ E_i\hat{T}_{\omega_i}^{-2}(E_i) [\,E_j,\,F_j\,]\Big)\\
&=&\frac{q^4}{q_j-q_j^{-1}}\Big(\underbrace{\big(1-[2]_{q^3}q^{3}+q^{6}\big)}K_j
+\underbrace{\big(1-[2]_{q^3}q^{-3}+q^{-6}\big)}K_j^{-1}\Big)\hat{T}_{\omega_i}^{-2}(E_i)E_i\\
&&+\frac{q^2}{q_j-q_j^{-1}}\Big(\underbrace{\big(1-[2]_{q^3}q^{3}+q^{6}\big)}K_i
+\underbrace{\big(1-[2]_{q^3}q^{-3}+q^{-6}\big)}K_j^{-1}\Big)\hat{T}_{\omega_i}^{-2}(E_i^2)\\
&&+\frac{1}{q_j-q_j^{-1}}\Big(\underbrace{\big(1-[2]_{q^3}q^{3}+q^{6}\big)}K_i
+\underbrace{\big(1-[2]_{q^3}q^{-3}+q^{-6}\big)}K_j^{-1}\Big)E_i\hat{T}_{\omega_i}^{-2}(E_i)\\
&=&0.
\end{eqnarray*}
Next we calculate that %one has to verify that
\begin{eqnarray*}
&&[\,Y,\,F_i\,]\\
&=&q^4\Big(E_j \underbrace{[\,\hat{T}_{\omega_i}^{-2}(E_i),\,F_i\,]}E_i-[2]_{q^3} \underbrace{[\,\hat{T}_{\omega_i}^{-2}(E_i),\,F_i\,]}E_j E_i+ \underbrace{[\,\hat{T}_{\omega_i}^{-2}(E_i),\,F_i\,]}E_i E_j \Big)\\
&&+q^4\Big(E_j \hat{T}_{\omega_i}^{-2}(E_i)\underbrace{[\,E_i,\,F_i\,]}-[2]_{q^3} \hat{T}_{\omega_i}^{-2}(E_i)E_j\underbrace{[\,E_i,\,F_i\,]}+ \hat{T}_{\omega_i}^{-2}(E_i)\underbrace{[\,E_i,\,F_i\,]} E_j \Big)\\
&&+q^2\Big(E_j\underbrace{[\,\hat{T}_{\omega_i}^{-1}(E_i),F_i\,]}\hat{T}_{\omega_i}^{-1}(E_i)-[2]_{q^3} \underbrace{[\,\hat{T}_{\omega_i}^{-1}(E_i),F_i\,]}E_j \hat{T}_{\omega_i}^{-1}(E_i)\\
&&\hspace{6.95cm}+ \underbrace{[\,\hat{T}_{\omega_i}^{-1}(E_i),F_i\,]}\hat{T}_{\omega_i}^{-1}(E_i)E_j \Big)\\
&&+q^2\Big(E_j\hat{T}_{\omega_i}^{-1}(E_i)\underbrace{[\,\hat{T}_{\omega_i}^{-1}(E_i),F_i\,]}
-[2]_{q^3}\hat{T}_{\omega_i}^{-1}(E_i) E_j \underbrace{[\,\hat{T}_{\omega_i}^{-1}(E_i),F_i\,]}\\
&&\hspace{6.95cm}+ \hat{T}_{\omega_i}^{-1}(E_i)\underbrace{[\,\hat{T}_{\omega_i}^{-1}(E_i),F_i\,]}E_j \Big)\\
&&+\Big(E_j\underbrace{[\,\hat{T}_{\omega_i}^{-2}(E_i),\,F_i\,]}E_i-[2]_{q^2}E_i E_j\underbrace{[\,\hat{T}_{\omega_i}^{-2}(E_i),\,F_i\,]}+ E_i\underbrace{[\,\hat{T}_{\omega_i}^{-2}(E_i),\,F_i\,]}E_j \Big)\\
&&+\Big(E_j\hat{T}_{\omega_i}^{-2}(E_i)\underbrace{[\,E_i,\,F_i\,]}
-[2]_{q^2}\underbrace{[\,E_i,\,F_i\,]} E_j\hat{T}_{\omega_i}^{-2}(E_i)+ \underbrace{[\,E_i,\,F_i\,]}\hat{T}_{\omega_i}^{-2}(E_i)E_j \Big)
\end{eqnarray*}
To show the above is actually zero, we collect similar terms into five summands.
%To make our calculation more clear, let us collect similar terms for
%five big terms. So we obtain,
\begin{eqnarray*}
&&\hbox{The first term}\\
&=&\gamma\Big(q^6 E_j \big(a_i(2)+\frac{q_i-q_i^{-1}}{2}(a_i(1))^2\big)E_i-q^3[2]_{q^3}
\big(a_i(2)+\frac{q_i-q_i^{-1}}{2}(a_i(1))^2\big)E_j E_i\\
&&\hspace{4.95cm}+ E_j E_i\big(a_i(2)+\frac{q_i-q_i^{-1}}{2}(a_i(1))^2\big)\Big)K_i\\
&=&\Big(-\frac{[4]}{2}E_j\hat{T}_{\omega_i}^{-2}(E_i)-\gamma^{\frac{1}{2}}\frac{[2]}{2}(q-q^{-1})E_j\big( a_i(1)\hat{T}_{\omega_i}^{-1}(E_i)+ \hat{T}_{\omega_i}^{-1}(E_i)a_i(1)\big)\Big)K_i
\end{eqnarray*}

\begin{eqnarray*}
&&\hbox{The second term}\\
=&&\gamma\Big(q^3 \big(a_i(2)+\frac{q_i-q_i^{-1}}{2}(a_i(1))^2\big)E_i E_j-[2]_{q^3}E_i E_j
\big(a_i(2)+\frac{q_i-q_i^{-1}}{2}(a_i(1))^2\big)\\
&&\hspace{4.95cm}+ q^{-3} E_i\big(a_i(2)+\frac{q_i-q_i^{-1}}{2}(a_i(1))^2\big)E_j\Big)K_i\\
=&&\Big(q^3\frac{[4]}{2}\hat{T}_{\omega_i}^{-2}(E_i)E_j+\gamma^{\frac{1}{2}}\frac{[2]}{2}q^3(q-q^{-1})\big( a_i(1)\hat{T}_{\omega_i}^{-1}(E_i)+ \hat{T}_{\omega_i}^{-1}(E_i)a_i(1)\big)E_j\Big)K_i
\end{eqnarray*}

\begin{eqnarray*}
&&\hbox{The third term}\\
=&&\gamma^{\frac{1}{2}}\Big(q^4E_j a_i(1)\hat{T}_{\omega_i}^{-1}(E_i)+q^2E_j \hat{T}_{\omega_i}^{-1}(E_i)a_i(1)-q[2]_{q^3}a_i(1)E_j\hat{T}_{\omega_i}^{-1}(E_i)\\
&&\hspace{0.95cm}-q^2[2]_{q^3}\hat{T}_{\omega_i}^{-1}(E_i)E_j a_i(1)+q a_i(1)\hat{T}_{\omega_i}^{-1}(E_i)E_j+q\hat{T}_{\omega_i}^{-1}(E_i)a_i(1)E_j\Big)K_i\\
=&&\gamma^{\frac{1}{2}}\Big(-q^2E_j a_i(1)\hat{T}_{\omega_i}^{-1}(E_i)-q^5 \hat{T}_{\omega_i}^{-1}(E_i)a_i(1)E_j\\
&&\hspace{1.95cm}+q^2 E_j \hat{T}_{\omega_i}^{-1}(E_i)a_i(1)
+q a_i(1)\hat{T}_{\omega_i}^{-1}(E_i)E_j\Big)K_i
\end{eqnarray*}

\begin{eqnarray*}
&&\hbox{The forth term}\\
=&&\frac{1}{q_i-q_i^{-1}}\Big(q^4E_j\hat{T}_{\omega_i}^{-2}(E_i)(K_i-K_i^{-1})+E_j(K_i-K_i^{-1}) \hat{T}_{\omega_i}^{-2}(E_i)\\
&&\hspace{3.45cm}-[2]_{q^3}(K_i-K_i^{-1})E_j\hat{T}_{\omega_i}^{-2}(E_i)\Big)\\
=&&[4]E_j\hat{T}_{\omega_i}^{-2}(E_i)K_i
\end{eqnarray*}

\begin{eqnarray*}
&&\hbox{The fifth term}\\
=&&\frac{1}{q_i-q_i^{-1}}\Big(-q^4[2]_{q^3}\hat{T}_{\omega_i}^{-2}(E_i)E_j(K_i-K_i^{-1})
+q^4\hat{T}_{\omega_i}^{-2}(E_i)(K_i-K_i^{-1})E_j \\
&&\hspace{3.45cm}+(K_i-K_i^{-1})\hat{T}_{\omega_i}^{-2}(E_i)E_j\Big)\\
=&&-q^3[4] \hat{T}_{\omega_i}^{-2}(E_i)E_j K_i
\end{eqnarray*}

Their total sum is zero, thus we have shown that $[Y, F_k]=0$ and subsequently the Serre relations
hold in this case.
%The above five terms tell us that the bracket is 0, so the required relation is proved in this case.\\

{\bf Case (iii): $A_{ij}=-1$ and $A_{i,\sigma(i)}=0, \sigma(j)\neq
j$.} The required relation
follows from that of the untwisted case verified in Case (i). \\

{\bf Case (iv): $A_{ij}=-1$ and $A_{i,\sigma(i)}=-1$.} This only happens for type $A_{2n}^{(2)}$.
Here $P_{ij}^{\pm}(z_1,z_2)=z_1q^{\pm 1}+z_2$ and $d_{ij}=\frac{1}{2}$,
which is exactly the same as that of  Case (ii) in type
$A_{2n-1}^{(1)}$. Thus the Serre relation is proved by repeating the
argument of Case (ii).

By now we have proved all cases of the Serre relation $(9)$.
\end{proof}

The last Serre relation (10) only exists for type $A_{2n}^{(2)}$.

\begin{prop}\,For $A_{i,\sigma(i)}=0$,
$$Sym_{z_1,z_2,z_3}[(q^{\mp\frac{3r}{4}}z_1-(q^{\frac{r}{4}}-q^{-\frac{r}{4}})z_2+
q^{\pm\frac{3r}{4}}z_3)
x_i^{\pm}(z_1)x_i^{\pm}(z_2)x_i^{\pm}(z_3)]=0.$$
\end{prop}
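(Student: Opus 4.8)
\end{prop}

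\begin{proof}[Proof sketch]
Relation $(10)$ occurs only for $U_q(\widehat{\frak{g}}^{\sigma})=U_q(A_{2n}^{(2)})$, and only at the distinguished short root node $i=n$ (where $\sigma(n)\neq n$, so $p_n=1$, and $r=2$); all the Drinfeld generators $x_i^{\pm}(z)$ appearing in it lie in the rank one subalgebra $U_q^{(n)}$, which is isomorphic to $U_q(A_2^{(2)})$ via the isomorphism $\varphi_n$ of Section~3. So the plan is to verify the cubic relation inside $U_q^{(n)}$. First I would pass to components: the displayed identity is equivalent to the family of relations, for all $k_1,k_2,k_3\in\mathbb Z$,
\begin{multline*}
Sym_{k_1,k_2,k_3}\Big(q^{\mp\frac{3r}{4}}x_i^{\pm}(k_1+1)x_i^{\pm}(k_2)x_i^{\pm}(k_3)-(q^{\frac{r}{4}}-q^{-\frac{r}{4}})x_i^{\pm}(k_1)x_i^{\pm}(k_2+1)x_i^{\pm}(k_3)\\
+q^{\pm\frac{3r}{4}}x_i^{\pm}(k_1)x_i^{\pm}(k_2)x_i^{\pm}(k_3+1)\Big)=0 .
\end{multline*}
Using that $T_{\omega_n}$ fixes $E_k$ and $F_k$ for $k\in I\setminus\{n\}$, together with the symmetry $F_\beta=\Phi(E_\beta)$, one may apply a suitable power of $\hat{T}_{\omega_n}$ to assume $k_1,k_2,k_3\ge 0$ and reduce to the relation for $x_i^{+}$; the left hand side then becomes an element $X\in U_q^{+}$ built only out of $E_n$ and its braid translates $\hat{T}_{\omega_n}^{-t}(E_n)$, which are root vectors of weight $t\delta+\alpha_n$.

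By Lemma~\ref{lemma1} it now suffices to show $[X,F_k]=0$ for every $k\in\widehat I$. For $k\in I\setminus\{n\}$ this is immediate, since $\hat{T}_{\omega_n}$ fixes $F_k$ and hence $[\hat{T}_{\omega_n}^{-t}(E_n),F_k]=\hat{T}_{\omega_n}^{-t}([E_n,F_k])=0$. For $k=0$ I would proceed exactly as in Case (ii) of the proof of Proposition~\ref{prop1}: by Lemma~\ref{L:comm-br} the vanishing $[\hat{T}_{\omega_n}^{-t}(E_n),F_0]=0$ follows from $r_0(\hat{T}_{\omega_n}^{-t}(E_n))=0$, i.e.\ from $T_0^{-1}\hat{T}_{\omega_n}^{-t}(E_n)\in U_q^{+}$, which holds because $s_0$ fixes the positive roots $t\delta+\alpha_n$ when $n\ge 2$ (the nodes $\alpha_0$ and $\alpha_n$ being then non-adjacent). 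The decisive case is $k=n$: here I would expand $[\hat{T}_{\omega_n}^{-t}(E_n),F_n]$ by means of the already established Drinfeld relations $(5)$, $(6)$, $(7)$ of Theorem~\ref{t1}, rewriting it through $a_n(m)$, $K_n^{\pm1}$ and lower translates $\hat{T}_{\omega_n}^{-t'}(E_n)$, and then collect coefficients. This computation is organized by induction on the spread $\max_{a,b}|k_a-k_b|$; the inductive step is supplied by relation $(6)$, which expresses $\hat{T}_{\omega_n}^{-1}(E_n)$ as a nonzero scalar multiple of $\gamma^{1/2}[a_n(1),E_n]$ and then allows $a_n(1)$ to be moved to the far left, leaving a combination of lower-spread instances of the relation (exactly as in Case (i) of Proposition~\ref{prop1} and in the $D_4^{(3)}$ sub-case of its Case (ii)).

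The one configuration the outline above does not cover is $n=1$, i.e.\ $U_q(A_2^{(2)})$ itself, where $\mathfrak{g}_0=\mathfrak{sl}_2$ and the clean $F_0$ step fails because $\alpha_0$ and $\alpha_1$ are adjacent. I expect this to be the principal obstacle: one must instead transport $F_0$ through $\varphi_1$ to the non-simple root vector $F_{\delta-2\alpha_1}$ and compute $[\hat{T}_{\omega_1}^{-t}(E_1),F_{\delta-2\alpha_1}]$ directly from the imaginary root vector relations of Section~3, and simultaneously keep the fractional exponents $q^{\pm\frac{r}{4}}=q^{\pm 1/2}$, the normalization $q_n$, the sign function $o(n)$ and the factors $\gamma^{\pm|k|/2}$ consistent throughout the $F_n$- and $F_0$-commutator bookkeeping. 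Once this case is settled (or, alternatively, quoted from the explicit verification for $U_q(A_2^{(2)})$ in \cite{A} and \cite{ZJ} and transported along $\varphi_1$), the coefficient cancellations for $k=n$ in the general case are routine, and relation $(10)$ follows.
\end{proof}
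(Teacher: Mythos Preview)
Your approach is essentially the paper's: reduce the component relations to a single base case via the $a_i(1)$-bracket trick and then kill that base case with Lemma~\ref{lemma1}. The paper is simply much terser. It invokes ``the same translation property'' to pass immediately to the spread-one element
\[
Z=q^{-3/2}\hat T_{\omega_i}^{-1}(E_i)E_i^2-(q^{1/2}+q^{-1/2})E_i\hat T_{\omega_i}^{-1}(E_i)E_i+q^{3/2}E_i^2\hat T_{\omega_i}^{-1}(E_i)
\]
(the component at $k_1=k_2=k_3=0$), and then verifies only $[Z,F_i]=0$ by an explicit coefficient calculation, leaving the remaining $[Z,F_k]$ and the inductive reduction to the reader.

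Two small comments. First, your exposition places the spread-induction \emph{inside} the $k=n$ branch of the Lemma~\ref{lemma1} argument, which reads circularly; it is cleaner---and what the paper actually does---to run the reduction on the identity $X=0$ itself (so that lower-spread instances are already zero by induction), and only afterwards apply Lemma~\ref{lemma1} to the base case $Z$. Second, your worry about $n=1$ is a bit overstated: even in $U_q(A_2^{(2)})$ one has $s_0(t\delta+\alpha_1)=(t+1)\delta-\alpha_1\in\Delta_+$, so the $r_0$-argument for $[\hat T_{\omega_1}^{-t}(E_1),F_0]=0$ is still available in principle; the paper does not isolate this case and elsewhere simply defers the rank-one algebra to~\cite{A}.
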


\begin{proof}\, By the same translation property of the Drinfeld generators, this relation can be replaced by
$$q^{-\frac{3}{2}}x_i^+(1)(x_i^+(0))^2-(q^{\frac{1}{2}}+q^{-\frac{1}{2}})x_i^+(0)x_i^+(1)x_i^+(0)
+q^{\frac{3}{2}}(x_i^+(0))^2 x_i^+(1)=0$$
By definition of $x_i^+(k)$, the above relation is rewritten as:
$$Z:=q^{-\frac{3}{2}}\hat{T}_{\omega_i}^{-1}(E_i)E_i^2
-(q^{\frac{1}{2}}+q^{-\frac{1}{2}})E_i\hat{T}_{\omega_i}^{-1}(E_i)E_i
+q^{\frac{3}{2}}E_i^2\hat{T}_{\omega_i}^{-1}(E_i)=0.$$
Using the same trick of Lemma \ref{lemma1}, we must show that $[\,Z,\, F_k\,]=0$ for all $k\in\hat{I}$.

\begin{eqnarray*}
&&[\,Z,\,F_i\,]\\
&=&\Big(q^{-\frac{3}{2}}\hat{T}_{\omega_i}^{-1}(E_i)[\,E_i,\,F_i\,]E_i
-[2]_i[\,E_i,\,F_i\,]\hat{T}_{\omega_i}^{-1}(E_i)E_i
+q^{-\frac{3}{2}}\hat{T}_{\omega_i}^{-1}(E_i)E_i[\,E_i,\,F_i\,]\Big)\\
&&+\Big(q^{\frac{3}{2}}E_i[\,E_i,\,F_i\,]\hat{T}_{\omega_i}^{-1}(E_i)
-[2]_i E_i\hat{T}_{\omega_i}^{-1}(E_i)[\,E_i,\,F_i\,]
+q^{\frac{3}{2}}[\,E_i,\,F_i\,]E_i\hat{T}_{\omega_i}^{-1}(E_i)\Big)\\
&&+\Big(q^{-\frac{3}{2}}[\,\hat{T}_{\omega_i}^{-1}(E_i),\,F_i\,]E_i^2
-[2]_i E_i[\,\hat{T}_{\omega_i}^{-1}(E_i),\,F_i\,]E_i
+q^{\frac{3}{2}}E_i^2[\,\hat{T}_{\omega_i}^{-1}(E_i),\,F_i\,]\Big)\\
&=&\Big(\frac{[2]_i}{q_i-q_i^{-1}}(q^{-1}-q^2)+q^{\frac{1}{2}}[2]_i(q+q^{-1}+1) \Big)\hat{T}_{\omega_i}^{-1}(E_i)E_i K_i\\
&&+\Big(\frac{[2]_i}{q_i-q_i^{-1}}(q^{3}-1)-q^{\frac{3}{2}}[2]_i(q+q^{-1}+1)\Big)
E_i\hat{T}_{\omega_i}^{-1}(E_i)K_i\\
&=&0.
\end{eqnarray*}
With this last Serre relation we have completed the verification of all Drinfeld relations.
\end{proof}

\section{Isomorphism between the two structures}

\subsection{The inverse homomorphism}\, To complete the proof of Drinfeld realization we need to
establish an isomorphism between the Drinfeld-Jimbo algebra and the Drinfeld new realization.
There have been several attempts to show the isomorphism in the literature,
and all previous proofs only established
a homomorphism from one form of the algebra into the other one. In this section
we will combine our previous approach \cite{J2, ZJ, JZ} together with Beck's idea of braid groups to
finally settle this long-standing problem and prove the isomorphism between the two forms of the quantum
affine algebras in both untwisted and twisted cases.

In order to show there exists an isomorphism between the above two structures, we recall the inverse map of $\Psi$ from Drinfeld realization to twisted quantum affine algebra developed in \cite{ZJ}, where we denoted by $\mathcal{U}_q(\hat{\frak{g}}^{\sigma})$ the Drinfeld realization, which is the associative
algebra over the complex field generated by the elements $x_i^{\pm}(k), a_i(l),\, K_i^{\pm1}$ and $\gamma^{\frac{1}{2}}$, where $i\in \{1,\,2,\, \dots, N\}$, $k\in \mathbb{Z}$, and $l\in \mathbb{Z}/\{0\}$, satisfying the relations $(1)-(10)$.

First of all, we review the notation of quantum Lie brackets from \cite{J2}.

\begin{defi} Let $\mathbb{K}$ be a field and
for $q_i\in \mathbb K^*=\mathbb{K}\backslash \{0\}$ and $i=1,2,\dots, s-1$,
The quantum Lie brackets $$[\,a_1, a_2,\dots,
a_s\,]_{(q_1,\,q_2,\,\dots,\, q_{s-1})}$$ and
$$[\,a_1, a_2, \dots,
a_s\,]_{\la q_1,\,q_2,\,\dots, \,q_{s-1}\ra}$$ are defined
inductively by
\begin{eqnarray*}
\begin{split}
[\,a_1, a_2\,]_{q_1}&=a_1a_2-q_1\,a_2a_1,\\
[\,a_s, a_{s-1}, \dots, a_1\,]_{(q_1,\,q_2,\,\dots,
\,q_{s-1})}&=[\,a_s, \,a_{s-1}, \dots,[\,a_{2},\,
a_1\,]_{q_{1}}\,]_{(q_2,\,\dots,\,q_{s-1})},\\
[\,a_1, a_2, \dots, a_s\,]_{\la q_1,\,q_2,\,\dots,
\,q_{s-1}\ra}&=[\,[\,a_1, a_2\,]_{q_1}, a_3,\,\dots, a_{s}\,]_{\la
q_2,\,\dots,\,q_{s-1}\ra},
\end{split}
\end{eqnarray*}
\end{defi}

To state the inverse homomorphism, we need to fix a particular path to realize the maximum
root of $\mathfrak g_0$.

Let $\theta=\alpha_{i_{h-1}}+\cdots+\alpha_{i_2}+\alpha_{i_1}$ be the maximum root and let
\begin{align}\label{a1}
X_{\theta}=[e_{i_{h-1}}, [e_{i_{h-2}}, \cdots, [e_{i_2}, e_{i_1}]\cdots ]
\end{align}
be the corresponding root vector in the Lie algebra $\mathfrak g_0$, which gives rise to
a sequence from $[1, \cdots, n]$: $i_1, i_2, \cdots, i_{h-1}$. We call such a sequence a
root chain to the maximum root, which is not unique.

From now on we fix a particular path to realize the maximum
root of $\mathfrak g_0$ and the associated sequence $i_1, i_2, \cdots, i_{h-1}$. We define for $2\leqslant k\leqslant h-1$
\begin{align}\label{a2}
(\alpha_{i_1}+\cdots+\alpha_{i_{k-1}}, \alpha_{i_{k}})=\epsilon_k\neq 0.
\end{align}

\begin{theo}\,{\label{t2}  Let $i_1,\,\dots,\, i_{h-1}$ be the sequence of indices
in the particular path realizing the maximum root $\theta$ given in
Eq. (\ref{a1}), then there is an algebra homomorphism $\phi: U_q(\hat{\frak{g}}^{\sigma})\rightarrow\mathcal{U}_q(\hat{\frak{g}}^{\sigma})$ defined by \begin{eqnarray*}
& \phi(E_i)=x_i^+(0),\qquad \phi(F_i)=\frac{1}{p_i}x_i^-(0),\qquad  \phi(K_i)=K_i^+(0),\\
& \phi(E_0)=ax_{\theta}^-(1)\gamma K_{\theta}^{-1},\qquad  \phi(F_0)=\gamma^{-1} K_{\theta}x_{\theta}^+(-1),\qquad
\phi(K_0)=\gamma K_{\theta}^{-1},
\end{eqnarray*}
where $K_{\theta}=K_{i_1}K_{i_2}\cdots K_{i_{h-1}}$ and  $a=(p_{i_2}\cdots p_{i_{h-1}})^{-1}$,
 and $x_{\theta}^-(1)$, $ x_{\theta}^+(-1)$ are defined by quantum Lie bracket as below:
\begin{eqnarray*}
&& x_{\theta}^-(1)=[\,x_{i_{h-1}}^-(0),\, x_{i_{h-1}}^-(0),\, \dots,\, x_{i_2}^-(0),\,x_{i_1}^-(1)\,]_{(q_{i_1}^{\epsilon_1},\,q_{i_2}^{\epsilon_2},\,\dots,\,q_{i_{h-2}}^{\epsilon_{h-2}})}\\
&& x_{\theta}^+(-1)=[\,x_{i_{1}}^+(-1),\, x_{i_{2}}^+(0),\, \dots,\, x_{i_{h-2}}^+(0),\,x_{i_{h-1}}^+(0)\,]_{\langle q_{i_1}^{-\epsilon_1},\,q_{i_2}^{-\epsilon_2},\,\dots,\,q_{i_{h-2}}^{-\epsilon_{h-2}}\rangle}.
\end{eqnarray*}}
\end{theo}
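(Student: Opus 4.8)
The plan is to verify that the assignments on the Chevalley generators $E_i,F_i,K_i^{\pm1}$ respect every defining relation $(R1)$--$(R6)$ of the Drinfeld--Jimbo presentation; since $U_q(\hat{\mathfrak g}^\sigma)$ is presented by those generators and relations, this will extend $\phi$ to a well-defined algebra homomorphism into $\mathcal U_q(\hat{\mathfrak g}^\sigma)$ (the reduced expression of $w_0$ being fixed once and for all, the elements $x_\theta^\pm$ are unambiguous). I would split the verification according to whether the affine node $0$ occurs. For indices lying entirely in $I$, after substituting $\phi(E_i)=x_i^+(0)$, $\phi(F_i)=p_i^{-1}x_i^-(0)$, $\phi(K_i^{\pm1})=K_i^{\pm1}$, the relations $(R1)$--$(R6)$ become immediate consequences of relations $(5)$--$(10)$ of Theorem \ref{t1} specialized at the zero modes: $(R3)$ uses the identity $d_ia_{ij}=\sum_{s=0}^{r-1}A_{i,\sigma^s(j)}$, the commutation $[x_i^+(0),x_j^-(0)]=0$ for $i\neq j\in I$ together with the multiplicity $\#\{s:\sigma^s(i)=i\}=p_i$ (absorbed by the factor $p_i^{-1}$) gives $(R4)$, and $(R5)$--$(R6)$ come from the zero-mode parts of $(8)$--$(10)$ and the finite Serre relations of $\mathfrak g_0$.

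For relations involving node $0$ the first step is weight bookkeeping. Being iterated $q$-brackets of the $x_{i_k}^{\mp}(0)$ and a single $x_{i_1}^{\mp}(\pm1)$, the vectors $x_\theta^-(1)$ and $x_\theta^+(-1)$ are homogeneous of weights $-\theta+\delta$ and $\theta-\delta$, so $\phi(E_0)$ and $\phi(F_0)$ carry weights $\alpha_0$ and $-\alpha_0$ once one knows the lattice identity $\theta=\delta-\alpha_0$; I would verify this identity and the companion $K_\theta=\prod_{i\in I}K_i^{r_i}$ directly from the twisted affine Dynkin diagrams, the type $A_{2n}^{(2)}$ with its three root lengths being handled via the copy $U_q^{(n)}\simeq U_q(A_2^{(2)})$ constructed in Section 3. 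Then $(R1)$ reduces to $\phi(K_\delta)=\gamma$, i.e. to that same identity; $(R2)$ is the commutativity of the $K_i$'s; $(R3)$ between $\phi(K_0)$ and $\phi(E_j),\phi(F_j)$, and between $\phi(K_i)$ and $\phi(E_0),\phi(F_0)$, follows from relation $(5)$ applied to weight vectors; and the part of $(R4)$ asserting $[\phi(E_0),\phi(F_j)]=[\phi(F_0),\phi(E_j)]=0$ for $j\in I$ holds because the resulting weight $\delta-\theta+\alpha_j$ is not a root of $\hat{\mathfrak g}^\sigma$, as one sees by peeling the defining $q$-bracket of $x_\theta^-(1)$ apart and applying relation $(7)$ repeatedly --- this is the assertion that the reconstructed affine Chevalley generators land in the correct root spaces, as in \cite{ZJ}.

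The computational heart is $(R4)$ for $i=j=0$, namely $[\phi(E_0),\phi(F_0)]=\dfrac{\phi(K_0)-\phi(K_0)^{-1}}{q_0-q_0^{-1}}$. Since $\gamma$ is central and commuting the Cartan factors $K_\theta^{\pm1}$ past the weight vectors contributes only a scalar, this reduces to an explicit evaluation of $[x_\theta^-(1),x_\theta^+(-1)]$ inside $\mathcal U_q(\hat{\mathfrak g}^\sigma)$. I would expand one of the two iterated $q$-brackets and repeatedly use relations $(6)$, $(7)$, $(8)$ of Theorem \ref{t1} to commute $x_{i_k}^+(0)$ past $x_{i_k}^-(1)$, collapsing the telescoping sum to a combination of the $\psi_i(m),\varphi_i(m)$ and ultimately $K_\theta^{\pm1}$; the normalizing constant $a=(p_{i_2}\cdots p_{i_{h-1}})^{-1}$ is exactly what this bookkeeping forces, and the underlying $q$-bracket identity is the one already established in \cite{J2,ZJ}.

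The hard part, and the step I expect to be the main obstacle, is the quantum Serre relations $(R5)$--$(R6)$ between node $0$ and the adjacent $i\in I$ (it suffices to treat those by $\sigma$-symmetry, and $(R6)$ follows from $(R5)$ via $\Phi$). After substituting $\phi(E_0)=a\,x_\theta^-(1)\gamma K_\theta^{-1}$ and clearing Cartan factors, one is left with an identity purely among iterated $q$-brackets of the Drinfeld generators $x_{i_k}^-(0),x_{i_1}^-(1),x_i^+(0)$. This I would prove by the inductive $q$-bracket algorithm of \cite{J2} in the twisted form of \cite{ZJ,JZ}: remove the reduced word $w_0=s_{i_1}\cdots s_{i_{h-1}}$ one simple reflection at a time, at each stage rewriting the Serre expression modulo relations $(6)$ and $(8)$ and the lower-degree Serre relations $(9)$--$(10)$ already verified in Section 4, until it collapses to $0$; the exponents $\epsilon_k=(\beta_k,\alpha_{i_{k+1}})$ of Eq. (\ref{a2}) are precisely what make the $q$-brackets telescope. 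The bookkeeping is heaviest for the exceptional types $D_4^{(3)}$ and $A_{2n}^{(2)}$, where three root lengths, the special element $w=s_0s_1\cdots s_n$, and the $U_q(A_2^{(2)})$-identification of Section 3 all enter, and I would treat those individually rather than uniformly. Once $(R1)$--$(R6)$ are verified $\phi$ is a well-defined homomorphism; combined with the epimorphism from $\mathcal U_q(\hat{\mathfrak g}^\sigma)$ onto $U_q(\hat{\mathfrak g}^\sigma)$ coming from Theorem \ref{t1} and \cite{ZJ,JZ}, checking that $\phi$ and that map are mutually inverse on generators then upgrades this to the desired isomorphism of the two forms.
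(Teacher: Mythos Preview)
Your proposal is correct and follows essentially the same approach as the paper: the paper does not give an in-text proof of this theorem but instead refers to \cite{ZJ,JZ}, where the verification that $\phi$ respects the Drinfeld--Jimbo relations $(R1)$--$(R6)$ is carried out precisely via the quantum Lie bracket techniques you outline (with the Serre relations at the affine node being the substantive computation). Your write-up even goes a step further than what Theorem~\ref{t2} itself asserts by sketching the inverse check; in the paper that is separated out into Proposition~5.3 and the ensuing corollary.
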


It is not difficult to check that the algebra $\mathcal{U}_q(\hat{\frak{g}}^{\sigma})$
is actually generated by $x_i^{\pm}(0)\, (i=1,\dots, n)$,
$x_{\theta}^-(1)$, $x_{\theta}^+(-1)$, therefore $\phi$ is an epimorphism.
%Final version, add references HRZ, JZ4

\begin{remark}\, In \cite{ZJ, JZ}, we have checked that $\phi$ is an algebra homomorphism using quantum Lie brackets. In the sequel we show that the
map is in fact the inverse of the action of the braid group.
\end{remark}

\subsection{Isomorphism between two presentations}

Theorem \ref{t1} induces that the homomorphism $\psi$ from Drinfeld realization $\mathcal{U}_q(\hat{\frak{g}}^{\sigma})$ to Drinfeld-Jimbo algebra $U_q(\hat{\frak{g}}^{\sigma})$ is surjective.  We now show its injectivity by checking that the products of two maps are identity.
We start with the following proposition.
\begin{prop}\, With homomorphisms  $\phi$ and $\psi$ defined as above, one has $\psi\phi(E_0)=E_0$.
\end{prop}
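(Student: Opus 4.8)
The plan is to evaluate $\psi\phi(E_0)$ directly from the two definitions. Since $\phi(E_0)=a\,x_\theta^-(1)\,\gamma K_\theta^{-1}$, with $x_\theta^-(1)$ the nested quantum Lie bracket of $x_{i_{h-1}}^-(0),\ldots,x_{i_2}^-(0),x_{i_1}^-(1)$ with parameters $q_{i_k}^{\epsilon_k}$, and since $\psi$ is an algebra homomorphism fixing $\gamma$ and each $K_i$, it passes through the bracket, giving
\[
\psi\phi(E_0)=a\,\bigl[\,\psi(x_{i_{h-1}}^-(0)),\ldots,\psi(x_{i_2}^-(0)),\psi(x_{i_1}^-(1))\,\bigr]_{(q_{i_1}^{\epsilon_1},\ldots,q_{i_{h-2}}^{\epsilon_{h-2}})}\,\gamma K_\theta^{-1}.
\]
From the definitions of the Drinfeld generators one has $\psi(x_{i_k}^-(0))=p_{i_k}F_{i_k}$ (the factor $p_{i_k}$ being forced by relation $(7)$ at level zero) and $\psi(x_{i_1}^-(1))=-o(i_1)\gamma^{-1}K_{i_1}E_{p_{i_1}\delta-\alpha_{i_1}}$ with $E_{p_{i_1}\delta-\alpha_{i_1}}=T_{\omega_{i_1}}T_{i_1}^{-1}(E_{i_1})$. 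I would fix the reduced word $w_0=s_{i_1}\cdots s_{i_{h-1}}$ of Eq.~(\ref{a1}) so that $\sigma(i_1)\neq i_1$ — which is always possible — so that $p_{i_1}=1$ and $x_{i_1}^-(1)\neq 0$.

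The heart of the proof is to show that the concrete quantum Lie bracket now appearing in $U_q(\widehat{\frak{g}}^{\sigma})$ equals $E_0$ up to exactly the scalar and Cartan factor that cancels the trailing $a\,\gamma K_\theta^{-1}$. First I would commute every $K$-factor to the right, turning the expression into the nested $q$-commutator of $F_{i_{h-1}},\ldots,F_{i_2}$ with the root vector $E_{\delta-\alpha_{i_1}}=T_{\omega_{i_1}}T_{i_1}^{-1}(E_{i_1})$; the $q$-parameters come out as $q_{i_k}^{\epsilon_k}$ precisely because $\epsilon_k=(\beta_k,\alpha_{i_{k+1}})$ records the weight pairing absorbed at each commutation. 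This is the ``loop-rotated'' version of the standard $q$-bracket formula for the maximal root vector $F_\theta$ of $\frak{g}_0$ along $w_0$, in which the innermost simple factor has been replaced by $T_{\omega_{i_1}}T_{i_1}^{-1}(E_{i_1})$; shifting the weight by one copy of $\delta$ carries it onto $E_{\alpha_0}=E_0$. One may read this off from the identification of braid-group root vectors with $q$-bracket root vectors (Beck and Damiani, \cite{B,BN,Da2}) together with the $q$-bracket computations of \cite{ZJ,JZ}; alternatively, I would argue self-containedly using Lemma~\ref{lemma1}, reducing to the checks $[\psi\phi(E_0),F_k]=[E_0,F_k]$ for all $k\in\hat{I}$. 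For $k\neq 0$ both brackets vanish, on the left because $E_{\delta-\alpha_{i_1}}$ commutes with every $F_k$, $k\neq i_1$, while at $k=i_1$ the $\mathfrak{sl}_2$-relations of Section~3 together with the derivation property of the quantum bracket kill it; for $k=0$ one recovers $\frac{K_0-K_0^{-1}}{q_0-q_0^{-1}}$ on both sides, and matching its coefficient pins down the overall constant.

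Finally I would collect all scalars — the sign $o(i_1)$, the product $p_{i_2}\cdots p_{i_{h-1}}=a^{-1}$ coming out of the $F_{i_k}$-normalizations, the central $\gamma$, and the conjugation relation tying $K_{i_1}$, $K_\theta$ and $E_0$ — and verify that the coefficient of $E_0$ is exactly $1$. I expect this middle step to be the main obstacle: tracking through the nested quantum Lie bracket how the $q_i$'s, the $K$'s, the sign map $o$ and $\gamma$ propagate as the $E$-type factor is moved past the $F$-type ones, and ensuring the identification with $E_0$ is uniform across all twisted types — in particular for the exceptional $D_4^{(3)}$ and for $A_{2n}^{(2)}$, where $\frak{g}_0$ carries roots of three lengths and $\alpha_0=\delta-\theta$ is best handled through the $A_2^{(2)}$ sub-copies of Section~3.
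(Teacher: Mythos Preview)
Your first route---identifying the nested $q$-bracket with a braid-group root vector---is exactly what the paper does, and is the heart of the argument. The paper makes this precise not by citing an abstract identification but by invoking the concrete identity (from \cite{M}) that for $a_{ij}=-1$ one has $T_i(\hat{T}_{\omega_i}(F_j))=q_i[F_i,\hat{T}_{\omega_j}(F_j)]_{q_i^{-1}}$, together with a separate calculation when $a_{i_1 i_2}=-2$. Iterating this peels the nested $q$-commutator into $T_{i_{h-1}}\cdots T_{i_2}(\hat{T}_{\omega_{i_1}}(F_{i_1}))$, which by Remark~\ref{r1} is a constant multiple of the root vector $E_{\delta-\theta}=E_0$. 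Note, however, that the paper does \emph{not} track the scalar: it concludes only that $\psi\phi(E_0)=A(q)\,E_0$ for some $A(q)$, and then absorbs $A$ into a rescaling of $\phi$. Your plan to pin down the coefficient exactly is more ambitious than what is actually proved there.

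Your alternative route through Lemma~\ref{lemma1} has a real gap. That lemma applies only to elements of $U_q^{+}$, but the nested $q$-bracket you write down is built from $F_{i_{h-1}},\ldots,F_{i_2}$ together with $E_{\delta-\alpha_{i_1}}$ and Cartan factors; it is not manifestly in $U_q^{+}$, and establishing that it lies there is essentially equivalent to the braid-group identification you are trying to bypass. Moreover, your claim that $[\psi\phi(E_0),F_k]=0$ for $k\in I\setminus\{i_1\}$ ``because $E_{\delta-\alpha_{i_1}}$ commutes with $F_k$'' ignores that the expression also contains the factors $F_{i_2},\ldots,F_{i_{h-1}}$, and the ordinary commutators $[F_{i_m},F_k]$ do not vanish. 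So the commutator checks are not as clean as you suggest, and without first knowing the element sits in $U_q^{+}$ the lemma does not apply anyway.
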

\begin{proof}\, Note that
\begin{eqnarray*}
&&\psi\phi(E_0)\\
&=&\psi(ax_{\theta}^-(1)\gamma K_{\theta}^{-1})\\
&=&[\,F_{i_{h-1}},\, F_{i_{h-2}},\, \dots,\, F_{i_2},\,\hat{T}_{\omega_i}(F_{i_1})
\,]_{(q_{i_1}^{\epsilon_1},\,q_{i_2}^{\epsilon_2},\,\dots,\,q_{i_{h-2}}^{\epsilon_{h-2}})}
K_{\delta-\theta}
\end{eqnarray*}
Note that for $a_{ij}=-1$ (see \cite{M}), $$T_i(\hat{T}_{\omega_i}(F_{j}))=-\hat{T}_{\omega_j}(F_{j})F_i+q_iF_i\hat{T}_{\omega_j}(F_{j})
=q_i[F_i,\,\hat{T}_{\omega_j}(F_{j})]_{q_i^{-1}},$$
So the above bracket can be written as $T_{i_{h-1}} T_{i_{h-2}}\cdots  T_{i_{2}}(\hat{T}_{\omega_i}(F_{i_1}))K_{\delta-\theta}$, when $i_1$ is not equal to $i_k$ for all $k=2,\dots, h-1$.

Thus we need to consider the case $a_{i_1\,i_2}=-2$.
\begin{eqnarray*}
&&[F_{i_1},\, [\,F_{i_2},\,\hat{T}_{\omega_{i_1}}(F_{i_1})\,]_{q_{i_1}^{\epsilon_1}}\,]_{q_{i_2}^{\epsilon_2}}
=[\,F_{i_1},\, \hat{T}_{\omega_{i_1}}([\,F_{i_2},\,F_{i_1}\,]_{q_{i_1}^{\epsilon_1}})\,]_{q_{i_2}^{\epsilon_2}}\\
&=&[\,F_{i_1},\, \hat{T}_{\omega_{i_2}}([\,F_{i_1},\,F_{i_2}\,]_{q_{i_1}^{\epsilon_1}})\,]_{q_{i_2}^{\epsilon_2}}
=\hat{T}_{\omega_{i_2}}([\,F_{i_1},\,[\,F_{i_1},\,F_{i_2}\,]_{q_{i_1}^{\epsilon_1}}\,]_{q_{i_2}^{\epsilon_2}})\\
&=&\frac{[2]_{i_1}}{q_{i_1}^{-2\epsilon_1}}T_{i_1}\hat{T}_{\omega_{i_2}}(F_{i_2}),
\end{eqnarray*}
where we have used the $\hat{T}_{\omega_{i_1}}([\,F_{i_2},\,F_{i_1}\,]_{q_{i_1}^{\epsilon_1}})
=\hat{T}_{\omega_{i_2}}([\,F_{i_1},\,F_{i_2}\,]_{q_{i_1}^{\epsilon_1}})$ (see \cite{B}).

Therefore the above q-bracket also can be written as

$$\psi\phi(E_0)=AT_{i'_{k}} T_{i'_{k-1}}\cdots  T_{i'_{2}}(\hat{T}_{\omega_{i'_1}}(F_{i'_1}))K_{\delta-\theta}$$
where $i'_1,\dots ,i'_k$ are in the set $\{i_1,\,\dots i_{h-1}\}$ such that every two elements are different, and $A$ is a polynomial of $q$.

In fact, we have
$$\delta-\alpha_{i_1}-\cdots-\alpha_{i_{h-1}}=\delta-\theta.$$

Recall that we have defined $E_{\delta-\theta}$ as the quantum root vector
$$T_{i'_{k}} T_{i'_{k-1}}\cdots  T_{i'_{2}}(\hat{T}_{\omega_{i'_1}}(F_{i'_1}))K_{\delta-\theta}$$
independently from
the sequence (by Remark \ref{r1}). Thus  $\psi\phi(E_0)=A E_0$ for $\alpha_0=\delta-\theta$, where $A$ is a polynomial of $q$.
Then we can adjust the map $\phi$ such that the action of $\psi\phi$ on $E_0$ is identity.
\end{proof}

 Similarly we can show that $\psi\phi(F_0)=F_0$. Note that by definition $\psi\phi$ and $\psi\phi$ fix all
 $E_i$ and $F_i$ for $i\neq 0$, and also
 the homomorphisms  $\phi$ and $\psi$ are surjective by construction, therefore $\psi\phi=\phi\psi=I$. So we have
 shown that
\begin{coro}\, The homomorphisms $\phi: U_q(\hat{\frak{g}}^{\sigma})\rightarrow\mathcal{U}_q(\hat{\frak{g}}^{\sigma})$ and $\psi: \mathcal{U}_q(\hat{\frak{g}}^{\sigma})\rightarrow U_q(\hat{\frak{g}}^{\sigma})$ are two algebra isomorphisms. In particular $\phi=\psi^{-1}$.
\end{coro}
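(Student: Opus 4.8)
The corollary asserts that the two homomorphisms $\phi$ and $\psi$ between the Drinfeld-Jimbo algebra $U_q(\hat{\frak{g}}^{\sigma})$ and the Drinfeld realization $\mathcal{U}_q(\hat{\frak{g}}^{\sigma})$ are mutually inverse isomorphisms. My plan is to show that both composites $\psi\phi$ and $\phi\psi$ are the identity map, since by construction both $\phi$ (Theorem \ref{t2}, extended to an epimorphism) and $\psi$ (Theorem \ref{t1}, which presents $U_q(\hat{\frak{g}}^{\sigma})$ on the Drinfeld generators and hence yields a surjection) are already known to be surjective. A surjective endomorphism that we can pin down on a generating set to be the identity is the identity, so the whole statement reduces to checking $\psi\phi = \mathrm{id}$ on the Chevalley generators $E_i, F_i, K_i^{\pm1}$ ($i\in\hat I$) of $U_q(\hat{\frak{g}}^{\sigma})$.

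\textbf{Key steps.} First, for $i\neq 0$ the maps are transparent: $\phi(E_i)=x_i^+(0)=E_i$, $\phi(F_i)=\frac1{p_i}x_i^-(0)$, $\phi(K_i)=K_i$, and $\psi$ sends these back — using the definitions of the Drinfeld generators $x_i^{\pm}(0)$ in terms of $E_i,F_i$ from Section 4.1 — to $E_i, F_i, K_i^{\pm1}$ respectively (the factor $1/p_i$ cancels against the normalization built into $x_i^-(0)$). So $\psi\phi$ fixes all generators with $i\neq 0$. Second, for $i=0$ the content is exactly the preceding Proposition: $\psi\phi(E_0)$ unwinds, via the quantum-bracket definition of $x_\theta^-(1)$ and the braid-group identities for $a_{ij}=-1$ and $a_{ij}=-2$ (the latter forcing a passage through $\hat T_{\omega}$-conjugation as in the displayed computation), into $A\cdot E_0$ where $E_0=E_{\delta-\theta}$ is the quantum root vector attached to the real root $\delta-\theta=\alpha_0$, and $A$ is a nonzero scalar in $q$ that is absorbed by readjusting the normalizing constant $a$ in $\phi(E_0)$. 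Applying the anti-automorphism $\Phi$ (Proposition 2.4 and Remark 2.5, noting $\Phi$ intertwines $T_i$ and commutes with passing between the two realizations) gives the companion statement $\psi\phi(F_0)=F_0$ for free, and $\psi\phi(K_0)=\psi(\gamma K_\theta^{-1})=\gamma K_\theta^{-1}=K_0$ is immediate from $K_\theta=K_{i_1}\cdots K_{i_{h-1}}$ and $K_\delta=\gamma$. Hence $\psi\phi$ is a surjective algebra endomorphism fixing a generating set, so $\psi\phi=\mathrm{id}$; combined with the surjectivity of $\psi$ this forces $\phi\psi=\mathrm{id}$ as well, giving $\phi=\psi^{-1}$.

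\textbf{Main obstacle.} The genuinely delicate point — already handled in the Proposition just above — is the $i=0$ case: identifying the iterated quantum Lie bracket $[\,x_{i_{h-1}}^-(0),\dots,x_{i_2}^-(0),x_{i_1}^-(1)\,]$ under $\psi$ with the braid-group-defined root vector $E_{\delta-\theta}$, up to a scalar. This requires (i) translating each $q$-bracket step into a braid-operator conjugation $T_{i_k}$, which works cleanly only for the simply-laced links $a_{i_1 i_2}=-1$ but needs the separate $a_{i_1 i_2}=-2$ identity $[F_{i_1},[F_{i_1},F_{i_2}]_{q^{\epsilon_1}}]_{q^{\epsilon_2}} = \tfrac{[2]_{i_1}}{q_{i_1}^{-2\epsilon_1}}T_{i_1}\hat T_{\omega_{i_2}}(F_{i_2})$ to absorb the higher Serre data, and (ii) invoking the well-definedness of the root vector $E_{\delta-\theta}$ independently of the chosen reduced word (Remark \ref{r1}), so that the scalar $A$ is unambiguous and can be cleared by rescaling $\phi(E_0)$. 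Everything else — the generators with $i\neq 0$, the $K_0$ case, and the passage from $\psi\phi=\mathrm{id}$ to $\phi\psi=\mathrm{id}$ — is formal.
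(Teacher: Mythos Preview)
Your proposal is correct and follows essentially the same approach as the paper: verify $\psi\phi$ on the Chevalley generators, invoke the preceding Proposition for the $i=0$ case (including the scalar adjustment), treat $F_0$ by the same mechanism (the paper simply says ``similarly'' where you invoke $\Phi$), and then conclude the isomorphism from surjectivity. One small slip: in your final sentence you write ``combined with the surjectivity of $\psi$ this forces $\phi\psi=\mathrm{id}$,'' but surjectivity of $\psi$ is already automatic from $\psi\phi=\mathrm{id}$ and does not by itself yield $\phi\psi=\mathrm{id}$; what you need here is the surjectivity of $\phi$ (which you correctly noted earlier), so that $\psi\phi=\mathrm{id}$ makes $\phi$ bijective and hence $\psi=\phi^{-1}$.
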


\bigskip

\centerline{\bf }

\vskip30pt \centerline{\bf COMPETING INTERESTS}

The authors declare that they have no competing interests.

\vskip30pt \centerline{\bf ACKNOWLEDGMENT}
N. Jing would like to thank the partial support of
Simons Foundation grant 198129, NSFC grant (11271138 and 11531004) and NSF grant
(1014554 and 1137837). H. Zhang would
like to thank the support of NSFC grant (11371238 and 11101258).

\bigskip

\bibliographystyle{amsalpha}

\end{document}